\newtheorem{theorem}{Theorem}[section]
\newtheorem{lettertheorem}{Theorem}
\newtheorem{prop}[theorem]{Proposition}
\newtheorem{lemma}[theorem]{Lemma}
\newtheorem{cor}[theorem]{Corollary}
\def\I{(0,1)}
\def\tI{{t\in\I}}
\def\tab{{t\in(a,b)}}
\def\se{\hookrightarrow}
\def\to{\rightarrow}
\def\R{\mathbb{R}}
\def\e{{\rm Emb}_\subset(M,N)}
\def\et{{\rm Emb}_\subset^\sim(M,N)}
\def\mp{\mathcal{M}}
\def\mpfull{\mathcal{M}(M,N)}
\def\m{\mathcal{M}^\sim}
\def\mfull{\mathcal{M}^\sim (M,N)}
\def\mprk{\mathcal{M}(M,\R^k)}
\def\mrk{\mathcal{M}^\sim (M,\R^k)}
\def\V{\mathcal{V}}
\def\vplain{\mu_\V}
\def\v#1{\vplain\left(#1\right)}
\def\dv{\mathrm{d}\mu_\V}
\def\dn{\mathrm{d}\nuS}
\def\nuS{\nu_{\{\mathcal{S}_n\}}}
\def\vnu#1{\nuS\left(#1\right)}
\def\vs#1{\vplain\big((#1)\cap \mathcal{S}\big)}
\def\p{p^\dist_{\phi\psi}}
\def\pt{p^{d}_{\phi_t\phi}}
\def\dist{d}
\def\drk{\dist_{\R^k}}
\def\F{\mathcal{F}(\mp)}
\def\Fab{\F}
\def\Frk{\mathcal{F}(\mrk)}
\def\Feab{\mathcal{F}(\e)}
\def\Ft{\mathcal{F}(\m)}
\def\f{\{(\phi_t, B_t)\}}
\def\fab{\f_{t\in(a,b)}}
\def\ftildeab{\{(\widetilde{\phi_t}, \widetilde{B_t})_{\tab}\}}
\def\fI{\f_\tI}
\def\ftab{\{[\phi_t, B_t]\}_\tab}
\def\li{\varliminf}
\def\ls{\varlimsup}
\def\lic{\varliminf_{t\to c}}
\def\lsc{\varlimsup_{t\to c}}
\def\liz{\varliminf_{t\to 0}}
\def\lsz{\varlimsup_{t\to 0}}
\def\lia{\varliminf_{t\to a}}
\def\lsa{\varlimsup_{t\to a}}
\def\Sn{\{\mathcal{S}_n\}}
\def\D{\mathcal{D}}
\def\Dfull{\D_{\Sn}^d}
\def\Dfullrk{\D_{\Sn}^{\drk}}
\def\DS{\D^\dist_\mathcal{S}}
\def\DSrk{\D^{\drk}_\mathcal{S}}
\def\DSrkp{\D^{\dist_{\R^{k+1}}}_\mathcal{S}}
\def\DM{\D_M^\dist}
\def\DSone{\D^{\dist_1}_\mathcal{S}}
\def\DStwo{\D^{\dist_2}_\mathcal{S}}
\def\Snfull{\{\mathcal{S}_n\}_{n=1}^\infty}
\def\DSn{\D_{\{\mathcal{S}_n\}}}
\def\rfull{r^d_{\{\mathcal{S}_n\}}}
\def\mathclap#1{\text{\hbox to 0pt{\hss$\mathsurround=0pt#1$\hss}}}
\def\mc#1{\mathclap{#1}}
\newcommand{\N}{{\mathbb N}}
\newcounter{todo}
\renewcommand{\geq}{\geqslant}
\renewcommand{\leq}{\leqslant}
\newcommand{\rightd}{\xrightarrow{\;\mathcal{D}\;}}
\newcommand{\rightds}{\xrightarrow{\DS}}
\newcommand{\rightae}{\xrightarrow{\text{a.e.}}}
\newcommand{\rightdsrk}{\xrightarrow{\DSrk}}
\newcommand{\rightdsrkp}{\xrightarrow{\DSrkp}}
\newenvironment{remark}{\refstepcounter{theorem}\par\medskip\noindent{\bf
Remark~\thetheorem.}}{\unskip\nobreak\hfill\hbox{ $\oslash$}\par\bigskip}
\newenvironment{question}{\refstepcounter{theorem}\par\medskip\noindent{\bf
Question~\thetheorem}}{\unskip\normalfont \unskip\nobreak\hfill\hbox{\bigskip}}
\newenvironment{example}{\refstepcounter{theorem}\par\medskip\noindent{\bf
Example~\thetheorem.}}{\unskip\nobreak\hfill\hbox{ $\oslash$}\par\bigskip}
\newenvironment{definition}{\refstepcounter{theorem}\par\medskip\noindent{\bf
Definition~\thetheorem.}}
\begin{document}

\title{Metrics and convergence in the moduli spaces of maps}
\author{Joseph Palmer}

\begin{abstract}
 We provide a general framework to study convergence properties of families of maps. For manifolds $M$ and $N$ where $M$ is equipped with a volume form $\V$ we consider families of maps in the collection $\{(\phi, B) : B \subset M, \phi:B \rightarrow N\text{ with both measurable}\}$ and we define a distance function $\D$ similar to the $L^1$ distance on such a collection.  The definition of $\D$ depends on several parameters, but we show that the properties and topology of the metric space do not depend on these choices.  In particular we show that the metric space is always complete.  After exploring the properties of $\D$ we shift our focus to exploring the convergence properties of families of such maps.
\end{abstract}

\maketitle

\section{Introduction}\label{sec_intro}
The study of collections of maps between smooth manifolds, particularly of embeddings or diffeomorphisms, has recently attracted a lot of interest~\cite{Ab1997,Bi1996,Pe2007,PeVN2012,PeVN2012sharp}. Having a distance function defined on a collection of such mappings gives the collections the structure of a metric space about which new questions may be posed, as it is for instance done in ~\cite{PePRS2013}.

In ~\cite{PeVN2012} it is shown that if $M$ and $N$ are symplectic manifolds with $B_t\subset M$ for each $t\in (a,b)\subset \R$ and $$\{ (\phi_t, B_t) \mid t\in (a,b) \text{ and } \phi_t: B_t \rightarrow N \}$$ is a smooth (see Definition \ref{def_smooth}) family of symplectic embeddings such that
\begin{enumerate}
\item each $B_t$ is open and simply connected;
\item if $s<t$ than $\overline{B_t} \subset B_s$;
\item for all $t,s \in (a,b)$ the set $\bigcup\limits_{v \in [t,s]} \! \phi_v (B_v)$ is relatively compact in $N$,
\end{enumerate}
then there exists a symplectic embedding $$\phi_0 : \bigcup_{t\in (a,b)}\! B_t \hookrightarrow N.$$

This result starts with a collection of embeddings which do not necessarily converge and then assures the existence of an embedding from the union of their domains, which takes the place of the limit of these embeddings.  A natural next question is given some collection of embeddings which does not converge how much does each embedding need to be changed in order to get a collection which does converge.  In particular we are interested in situations in which each element of the collection must only be perturbed by an arbitrarily small amount in order to produce a new converging family, which is of course stronger than just requiring that an embedding of the union of their domains exist as in the result above.  In our case, again unlike in the result above, we are more interested in the nature of the family of embeddings than the existence of such a limiting embedding. This leads us to the problem of formalizing what we mean by a small perturbation. To address this we define a distance function on maps which do not necessarily have the same domain. Putting a metric on maps is exactly what is done when studying $L^p$ spaces, and once our distance is defined we will explain the relationship between our distance and the $L^1$ norm in Remark \ref{rmk_lp}.  Considering families of maps with different domains is absolutely essential for applications, see for instance the work of Pelayo-V\~{u} Ng\d{o}c ~\cite{PeVN2012,PeVN2012sharp}. Suppose that the maps are defined on subsets of a smooth manifold $M$ with a volume form $\V$ to a complete Riemannian manifold\footnote{In fact, we will soon see that the properties of the distance will not depend on the choice of metric and it is known that any smooth manifold admits a complete Riemannian metric, so we are not making any assumptions on $N$.} $N$ with natural distance $d$.  By this we mean that if $g$ is the Riemannian metric on $N$ and $y_1, y_2 \in N$ then $d(y_1,y_2) = \text{inf}\{\varint_0^1 \sqrt{g(\gamma'(t), \gamma'(t))}\,\mathrm{d}t\mid \gamma:[0,1]\to N \text{ is continuous with } \gamma(0)=y_1 \text{ and }\gamma(1)=y_2\}$. Throughout the paper by {\it metric} we will always mean a metric function on the space and if referring to a metric tensor we will always specify the Riemannian metric. Also, it is well known (see the Hopf-Rinow Theorem ~\cite[Satz I]{HR1931}) that $(N,g)$ being a geodesically complete Riemannian manifold is equivalent to $(N,d)$ being complete as a metric space so throughout this paper we will call such a manifold complete without specifying. Let $\vplain$ be the measure on $M$ induced by $\V$.  That is, for any $A \subset M$ we have $\v{A} = \varint_A \V$. Now we will define the set of maps we will be working with (shown in Figure \ref{fig_maps}).
\begin{definition}
Let $$\mpfull := \left\{(\phi,B_\phi):\begin{array}{l} B_\phi\subset M \text{ a nonempty measurable set and }\\ \phi:B_\phi \rightarrow N\text{ a measurable function}\end{array}\right\}$$ which we will frequently denote by $\mp$ when $M$ and $N$ are understood and we will also frequently write only $\phi$ where the domain is understood to be denoted by $B_\phi$. Also let $$\Fab=\big\{\f_{t\in(a,b)} \subset \mp\mid a,b\in\R \text{ with }a<b\big\}.$$
\end{definition}

\begin{figure}
 \centering
 \includegraphics[height=130pt]{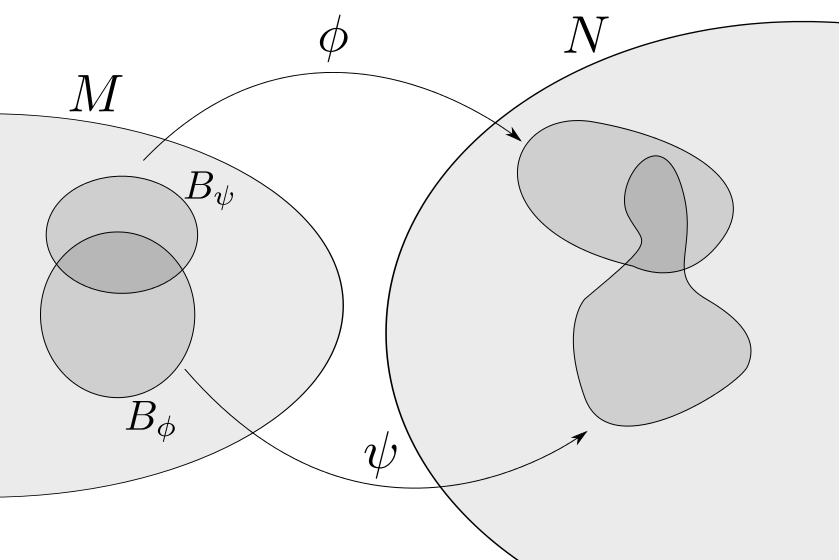}
 \caption{We will be considering maps from subsets of $M$ to $N$.}
 \label{fig_maps}
\end{figure}

In fact, for the remaining paper we will denote by $\mathcal{F}(\mathcal{S})$ the collection of one parameter families in a set $\mathcal{S}$ indexed by an open interval\footnote{Clearly it is equivalent to use any open interval, and thus we will use an arbitrary interval in the statements of the theorems but in the proofs we will often use $\I$ for simplicity.} in $\R$. 

A reasonable first guess for the ``distance'' between two elements in $\mp$ would be to integrate a penalty function over $M$.  That is we start with a function which assigns a penalty at each point in $M$ depending on how different the mappings are at that point, and then compute the ``distance'' between the two mappings by adding up all of these penalties via integration.  For each point in the symmetric difference, we know that one mapping acts on it while the other does not, so we assign it a maximum penalty of $1$.  For each point which is in the intersection of the domains, we simply find the distance between where each map sends the point, cut off to not exceed a maximum value of $1$, and use this as the penalty.  From this motivation we have with the following definition.
\begin{definition}\label{def_penalty}
For $(\phi,B_\phi), (\psi, B_\psi) \in \mp$ we define the {\it penalty function} $\p : M \to [0, 1]$ by $$\p (x) = \left\{ \begin{array}{ll} 1 & \text{if }x\in B_\phi \vartriangle B_\psi; \\ \text{min}\{ 1, d(\phi(x), \psi(x))\} & \text{if }x \in B_\phi \cap B_\psi; \\ 0 & \text{otherwise}\end{array}\right.,$$ and we define $$\DM((\phi, B_\phi), (\psi, B_\psi)) = \varint_M \p\dv.$$
\end{definition}
Notice that we need the minimum in the definition of $\p$ to make sure that any point on which both mappings act is not penalized more than the points which are only acted on by one mapping.  It is worth noting that even though the choice of the constant 1 may seem arbitrary it is shown in Proposition \ref{prop_da} that any positive constant may be used instead and the induced distance will still be strongly equivalent (see Definition \ref{def_metriceqv}).  Also, as long as $d$ is chosen so that the metric space $(N,d)$ is complete (which can always be done ~\cite[Theorem 1]{Nomizu}) the choice of $d$ will not change the properties of the induced metric.

However while $\DM$ is the natural ``distance'' it turns out to not be a distance function on $\mp$.  There are two main problems.  First, it is possible that $\DM$ will evaluate to zero on two distinct elements of $\mp$ and second it might be that $\DM$ evaluates to infinity.  The first problem is addressed easily by having $\DM$ act on equivalence classes of maps but the second problem will require a more delicate solution.

In fact, the problem of $\DM$ evaluating to infinity is even worse than it seems.  Suppose that $\phi_t(x)=(x,t)$ takes $\R$ into $\R^2$ for all $\tI$. Using the notation from above in this case we have that $M = B_{\phi_t} = \R$ for all $\tI$ and $N = \R^2$ with $\dist_{\R^2}$ the usual distance.   Then $\phi_t$ has a pointwise limit of $\phi_0 (x) := (x,0)$, but despite this we have that $\D_M^{\dist_{\R^2}}(\phi_t, \phi_0)$ is infinite for all $\tI$.  This example shows that $\DM$ is not always able to capture when a family of maps is converging.  We are able to solve this problem by observing $\DM$ restricted to various subsets of $M$.

\begin{definition}\label{def_ds}
We define $\D$ restricted to some set $\mathcal{S} \subset M$ of finite volume by $$\DS((\phi,B_\phi), (\psi, B_\psi)) = \varint_{S} \p\, \dv.$$ 
\end{definition}

\begin{figure}
 \centering
 \includegraphics[height=180pt]{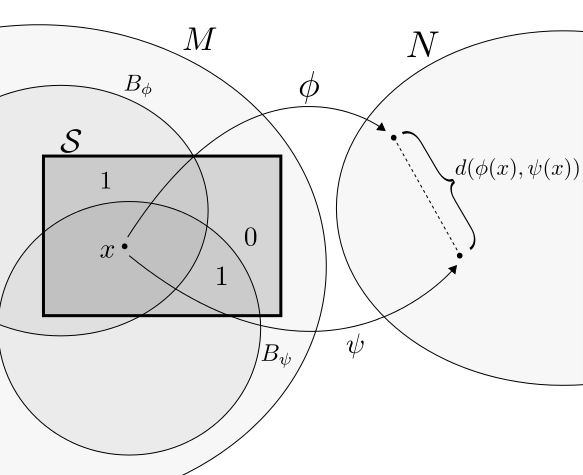}
 \caption{A graphic representing the values of $\p$ on $\mathcal{S} \subset M$.}
 \label{fig_dist}
\end{figure}

 Figure \ref{fig_dist} shows a good way to visualize computing $\DS$. Now each $\DS$ contains all of the information about $\DM$ on the set $\mathcal{S}$ and cannot evaluate to infinity.  The problem now, of course, is that we no longer have just a single metric with information about all of $M$ but instead have an infinite family of metrics which each have information about only one finite volume subset of $M$.  We solve this last problem by recalling that any manifold admits a nested exhaustion by compact sets, which must each have finite volume.  For the remaining portion of this paper by exhaustion we will always mean a countable nested exhaustion by finite volume sets.  In the following definition we set up the framework for this paper.  Corollary \ref{cor_dzero} states that part \eqref{part_dconv1} is well defined.
 
\begin{definition}\label{def_big}
Let $M$ and $N$ be manifolds with $\dist$ a metric on $N$ induced by a Riemannian metric.
\begin{enumerate}
 \item Let $\Snfull$ be a exhaustion of $M$ by nested finite volume sets\footnote{We know that such a collection must exist since each manifold admits a compact exhaustion} and let $\nuS$\footnote{We will write $\nuS$ in place of $\nu_{\Snfull}$ and $\Dfull$ in place of $\D_{\Snfull}^\dist$ for simplicity.} be the measure on $M$ given by $$\nuS(A) = \sum_{n=1}^{\infty} 2^{-n} \frac{\v{A \cap S_n}}{\v{S_n}}$$ for $A\subset M$.  Notice that $\nuS (M) = 1$ so $\nuS$ is a probability measure.  Then define\footnote{There is an equivalent definition of $\Dfull$ given in Proposition \ref{prop_ddef2} which is used in some of the proofs in this paper and explicitly shows the relation between $\Dfull$ and $\DS$.} $$\Dfull(\phi, \psi) = \varint_M \p\mathrm{d}\nuS.$$ 
 \item\label{part_dconv1} If $\Dfull (\phi, \psi) = 0$ for one choice of exhaustion then it equals zero for all choices of exhaustion and so in that case we write $\D(\phi, \psi)=0.$
 \item Let $$\mfull:=\mpfull / \sim$$ where $(\phi,B_\phi)\sim (\psi,B_\psi) \text{ if and only if } \D(\phi,\psi)=0.$ As before we will frequently shorten this to $\m$ and we denote by $[\phi, B_\phi]$ the equivalence class of $(\phi, B_\phi)\in\mp$.
\end{enumerate}
\end{definition}

Now we have enough notation to state our first result.
\pagebreak
\begin{lettertheorem}\label{thm_complete}
 Let $M$ and $N$ be manifolds and $\V$ a volume form on $M$.  Then for any choice of a metric $\dist$ on $N$ induced by a complete Riemannian metric and a countable exhaustion $\Snfull$ of $M$ by nested finite volume sets, the space $(\m,\Dfull)$ is a complete metric space.  Moreover, such a metric and exhaustion alway exist and if $d'$ and $\{\mathcal{S}_n'\}_{n=1}^\infty$ are other such choices then $\D_{\{\mathcal{S}_n'\}}^{d'}$ induces the same topology as $\Dfull$ on $\m$. 
\end{lettertheorem}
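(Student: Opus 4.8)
The plan is to verify the metric axioms, establish completeness, and then dispose of the existence and topological-equivalence claims. That $\Dfull$ is a metric is largely routine: from $0\le p^{d}_{\phi\psi}\le 1$ and $\nuS(M)=1$ we get $\Dfull\le 1$, symmetry is immediate, and the triangle inequality follows by integrating the pointwise bound $p^{d}_{\phi\chi}(x)\le p^{d}_{\phi\psi}(x)+p^{d}_{\psi\chi}(x)$. I would check this bound by a short case analysis on which of $B_\phi,B_\psi,B_\chi$ contain $x$: the only case involving the geometry of $N$ is $x$ in all three, where it reduces to $\min\{1,a+b\}\le\min\{1,a\}+\min\{1,b\}$ with $a=d(\phi(x),\psi(x))$ and $b=d(\psi(x),\chi(x))$ together with the triangle inequality in $N$; in every other case a domain mismatch already forces the right-hand side to be $\ge 1$, unless $x$ lies in neither $B_\phi$ nor $B_\chi$ and the left-hand side is $0$. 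Integrating also makes $\sim$ transitive (so a genuine equivalence relation) and $\Dfull$ well defined on $\m$, while identity of indiscernibles on $\m$ holds by the definition of $\sim$, using Corollary~\ref{cor_dzero} for the exhaustion-independence in that definition. For existence, $N$ carries a complete Riemannian metric by \cite[Theorem~1]{Nomizu}, and $M$, being $\sigma$-compact, admits a countable nested exhaustion by compact --- hence finite-volume --- sets.

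For completeness, let $\{[\phi_k,B_k]\}$ be Cauchy; since a Cauchy sequence in a metric space converges as soon as a subsequence does, I pass to a subsequence with $\sum_k\Dfull(\phi_k,\phi_{k+1})<\infty$. Using the identity $\Dfull(\phi,\psi)=\sum_n 2^{-n}\v{\mathcal{S}_n}^{-1}\,\D^{d}_{\mathcal{S}_n}(\phi,\psi)$ of Proposition~\ref{prop_ddef2} and non-negativity of the summands, this forces $\sum_k\D^{d}_{\mathcal{S}_n}(\phi_k,\phi_{k+1})=\sum_k\int_{\mathcal{S}_n}p^{d}_{\phi_k\phi_{k+1}}\dv<\infty$ for every fixed $n$, so by monotone convergence $\sum_k p^{d}_{\phi_k\phi_{k+1}}$ is $\vplain$-integrable on $\mathcal{S}_n$ and thus finite $\vplain$-a.e.\ there; taking the countable union of the exceptional sets, there is one $\vplain$-null set $Z\subset M$ off which $\sum_k p^{d}_{\phi_k\phi_{k+1}}(x)<\infty$. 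For $x\in M\setminus Z$ the terms tend to $0$, hence are eventually $<1$, and since $p^{d}_{\phi_k\phi_{k+1}}(x)=1$ whenever $x\in B_k\vartriangle B_{k+1}$ the point $x$ is eventually in every $B_k$ or eventually in none. Let $B_\infty=(\liminf_k B_k)\setminus Z$, which is measurable; for $x\in B_\infty$ the tail $\sum_k d(\phi_k(x),\phi_{k+1}(x))$ converges, so $(\phi_k(x))_k$ is $d$-Cauchy in $N$ and, here using completeness of $(N,d)$, converges to a point $\phi_\infty(x)$. Extending each $\phi_k$ by a fixed constant off $B_k$, the map $\phi_\infty$ is an a.e.\ pointwise limit of measurable maps into the separable manifold $N$, hence measurable; if $B_\infty$ is $\vplain$-null I instead take any one-point domain, since all maps with null domain lie in a single class of $\m$. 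To finish, for each $n$ we have $p^{d}_{\phi_k\phi_\infty}(x)\to 0$ for $\vplain$-a.e.\ $x\in\mathcal{S}_n$ --- if $x\in B_\infty$ because $\phi_k(x)\to\phi_\infty(x)$, otherwise because eventually $x$ lies in neither domain --- and, being dominated by $1\in L^1(\mathcal{S}_n,\vplain)$, dominated convergence gives $\D^{d}_{\mathcal{S}_n}(\phi_k,\phi_\infty)\to 0$; a second application of dominated convergence to the series $\sum_n 2^{-n}\v{\mathcal{S}_n}^{-1}\D^{d}_{\mathcal{S}_n}(\phi_k,\phi_\infty)$, whose terms are bounded by $2^{-n}$, yields $\Dfull(\phi_k,\phi_\infty)\to 0$, so the original sequence converges to $[\phi_\infty,B_\infty]$.

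For the topological-equivalence claim, let $(d,\Snfull)$ and $(d',\{\mathcal{S}_n'\})$ be two admissible choices. Since $\m$ is metrizable under each, it suffices to show the two metrics have the same convergent sequences, and by symmetry to prove one implication. I would use the standard double-subsequence argument: if $\Dfull(\phi_k,\phi)\to 0$ but $\D_{\{\mathcal{S}_n'\}}^{d'}(\phi_k,\phi)\not\to 0$, extract a subsequence bounded away from $0$ in the primed metric, then a further subsequence along which $\sum p^{d}_{\phi_k\phi}$ is $\nuS$-integrable, so that $p^{d}_{\phi_k\phi}\to 0$ $\nuS$-a.e. A set is $\nuS$-null iff $\vplain$-null iff $\nu_{\{\mathcal{S}_n'\}}$-null, because $\bigcup_n\mathcal{S}_n=M=\bigcup_n\mathcal{S}_n'$, so this convergence also holds $\nu_{\{\mathcal{S}_n'\}}$-a.e.; and since $d$ and $d'$ both induce the manifold topology on $N$, the same pointwise case check as before upgrades it to $p^{d'}_{\phi_k\phi}\to 0$ $\nu_{\{\mathcal{S}_n'\}}$-a.e. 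As $p^{d'}_{\phi_k\phi}\le 1\in L^1(\nu_{\{\mathcal{S}_n'\}})$, which is a probability measure, dominated convergence gives $\D_{\{\mathcal{S}_n'\}}^{d'}(\phi_k,\phi)\to 0$ along that subsequence, a contradiction.

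The main obstacle I anticipate is the completeness step: isolating the limiting domain $B_\infty$ and verifying its measurability, arranging the pointwise construction of $\phi_\infty$ so that it works simultaneously $\vplain$-a.e.\ over every $\mathcal{S}_n$, and treating cleanly the degenerate case where the domains collapse to a null set. The rest --- the metric axioms, the existence statement, and the topological equivalence --- is bookkeeping with the measure $\nuS$ plus routine dominated-convergence arguments.
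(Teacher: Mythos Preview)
Your proof is correct and takes a genuinely different, more elementary route than the paper, particularly for completeness. The paper reduces to the case $N=\R^k$ via the Nash isometric embedding theorem (together with M\"uller's result that a complete $N$ embeds with closed image), then proves completeness for $\R^k$ in three stages --- first for maps with a common compact domain $\mathcal{S}$ by a five-step truncation argument that ultimately appeals to completeness of $L^1$ (Lemma~\ref{lem_rkcomplete1}), then for varying domains by lifting to $\R^{k+1}$ (Lemma~\ref{lem_rkcomplete2}), then globalizing over an exhaustion (Lemma~\ref{lem_rkcomplete3}) --- and finally pulls the limit back through $\rho^{-1}$ using Lemma~\ref{lem_closedlimit} and Lemma~\ref{lem_dchecktop}. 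Your argument is the direct Riesz--Fischer device: pass to a fast Cauchy subsequence so that $\sum_k p^d_{\phi_k\phi_{k+1}}$ is integrable, read off $\vplain$-a.e.\ stabilization of the domains and $d$-Cauchyness of the values, and invoke completeness of $(N,d)$ pointwise. This bypasses the Nash embedding and the $L^1$ reduction entirely and is considerably shorter; what the paper's route buys is an explicit structural link to $L^1$ and the Euclidean setting, which it exploits elsewhere. Likewise, for the topological-equivalence claim the paper proves a quantitative lemma (Lemma~\ref{lem_intr}) bounding the measure of $\{x:d(\phi_t(x),\phi(x))>\mathcal{R}(x)\}$ and feeds it into Lemma~\ref{lem_equivmetric}, whereas your double-subsequence argument manufactures a.e.\ pointwise convergence of $p^d_{\phi_k\phi}$ directly and then transfers it to $p^{d'}_{\phi_k\phi}$ using only that $d$ and $d'$ induce the manifold topology on $N$ and that $\nuS$, $\nu_{\{\mathcal{S}_n'\}}$, and $\vplain$ share null sets.
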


In light of Theorem \ref{thm_complete} we can now make the following definitions.

\begin{definition}
Let $a,b,c\in\R$ with $a<b$ and $c\in[a,b]$.  Also let $\fab \in \Fab$ and $\phi_0 \in \mp$.
\begin{enumerate} 
 \item Let $\mathcal{S} \subset M$ be any subset.  If $\lim_{t \rightarrow c} \DS (\phi_t, \phi_0) = 0$ we write $$\phi_t \rightds \phi_0\text{ as }t\to c.$$
 \item\label{part_dconv2} If $\lim_{t \rightarrow c} \Dfull (\phi_t, \phi)=0$ for one, and hence all, choices of $\{\mathcal{S}_n\}_{n=1}^\infty$ and $\dist$, we write $$\phi_t \rightd \phi_0\text{ as }t\to c.$$
 \item\label{part_ddist} Since all metrics $\Dfull$ generate the same topology on the set $\m$ we denote this set with such topology as $(\m, \D)$.
\end{enumerate}
\end{definition}

Thus $\m$ is a metric space with metric $\Dfull$ for any choice of exhaustion and complete metric and the metric spaces for different choices of exhaustion are all equivalent topologically\footnote{Here we should note that all of the information about $\Dfull$ is contained in $\DM$ if $M$ is finite volume, and in this case we will only have to consider $\DM$, see Remark \ref{rmk_mcomp}.}. 
\begin{remark}\label{rmk_lp}
Recall that $L^p$ spaces are collections of maps from a fixed measure set to $\R$.  Since  $\mp$ is a collection of all maps between fixed manifolds we can see that in some sense $\mp$ is a generalization of $L^p$ spaces.  The function $\Dfull$ is similar to the $L^1$ norm, but there are several differences.  It is noteworthy that {\it any} measurable mapping from $M$ to $N$ is ``integrable'', by which we mean that we can evaluate the distance between any two measurable mappings to get a finite number.  This is why we can let any such map be in $\mp$, as opposed to the case of $L^p$ spaces in which we must only consider integrable functions which have growth restrictions.   In Example \ref{ex_wave} we work out a specific case which does not converge in $L^p$ for any $p$ but does converge with respect to our distance.

There are many instances in which $L^p$ spaces have been generalized.  For example, many authors ~\cite{CUFiNe2003,DiRu2003,FaZh2001,KaLe2005} have explored generalizing $L^p$ spaces by letting $p$ be replaced by a function $p(x)$ which varies in the space.  These papers, though, still only consider the case of real valued functions.  In ~\cite{Am1990} the author studies functions with values in a metric space, as we do here, but he does not require any manifold structure and he only examines subsets of $\R^n$.  Finally, in ~\cite{St2010} the author studies $L^p$ functions on manifolds, but again these functions are required to take values in $\R$.  In all of these cases the authors are generalizing the important concept of $L^p$ functions, but only in our case can we examine all measurable functions between fixed manifolds and even functions with different domains.
\end{remark}

We are able to use the connection with $L^p$ spaces to prove a portion of Theorem \ref{thm_complete}. We use the well known result that  $L^1$ is complete to prove that as long as the target manifold $N$ is a complete Riemannian manifold we have that our metric space $\m$ is complete.  Since we have chosen $(N, d)$ to be complete the result follows.

Now that we have a metric defined on $\m$ we can explore families in $\Ft$ which converge with respect to that metric.  In Section \ref{sec_convg} we study another type of convergence and we explore the connection between these two natural forms of convergence on $\m$.
\begin{definition}\label{def_ptwsae}
 Let $a,b,c\in\R$ with $a<b$ and $c\in [a,b]$.  Let $\fab \in \F$ and suppose there exists some measurable $B\subset M$ satisfying $$B \subset \left\{x\in \lic B_t \mid \lim_{t \rightarrow c} \phi_t (x) \text{ exists}\right\}$$ and $\v{\lsc B_t \setminus B} = 0$\footnote{This in particular requires that the domains converge as sets as is described in Definition \ref{def_setconvg}.}. Then, with
 \begin{align*}
  \phi : B &\rightarrow N\\
         x &\mapsto \lim_{t \rightarrow c} \phi_t (x).
 \end{align*}
 we say that $\fab$ {\it converges to $(\phi, B)$ almost everywhere pointwise as $t\to c$} in $\mp$ and we write $\phi_t \rightae \phi$ as $t\to c$.
\end{definition}

We notice that if a family $\fab\in \F$ (for $a,b\in\R$ with $a<b$) converges to $\phi\in\mp$ almost everywhere pointwise as $t\to c\in[a,b]$ then it converges to $\phi$ in $\D$ as $t\to c$.  This gives us our second theorem.

\begin{lettertheorem}\label{thm_convg}
 Let $a,b,c \in \R$ such that $a<b$ and $c\in[a,b]$.  Suppose $\fab$ is a family such that $(\phi_t, B_t) \in \mp$ for each $t\in(a,b)$ and $(\phi,B) \in \mp$. If $\phi_t \rightae \phi$ as $t\to c$ then $\phi_t \rightd \phi \text{ as }t\to c$.
\end{lettertheorem}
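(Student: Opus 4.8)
The plan is to reduce the statement, via the representation of $\Dfull$ in terms of the truncated distances $\DS$, to an application of the dominated convergence theorem on each finite volume piece of a fixed exhaustion. First I would fix a distance $d$ on $N$ induced by a complete Riemannian metric together with a countable exhaustion $\Snfull$ of $M$ by nested finite volume sets, both of which exist by Theorem \ref{thm_complete}; since the relation $\rightd$ does not depend on these choices --- this is part of what Theorem \ref{thm_complete} ensures --- it is enough to establish $\lim_{t\to c}\Dfull(\phi_t,\phi)=0$ for this particular choice. Because $\pt\geq 0$, the sum defining $\nuS$ can be interchanged with the integral (this relation between $\Dfull$ and the $\DS$ is recorded in Proposition \ref{prop_ddef2}), giving
\begin{equation*}
  \Dfull(\phi_t,\phi)=\sum_{n=1}^\infty 2^{-n}\,\frac{1}{\v{S_n}}\varint_{S_n}\pt\,\dv ,
\end{equation*}
with each summand in $[0,2^{-n}]$. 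Given $\varepsilon>0$ I would pick $N$ with $\sum_{n>N}2^{-n}<\varepsilon/2$, so that the tail of the series is automatically below $\varepsilon/2$ and only the finitely many terms with $n\leq N$ need to be controlled; hence the whole statement reduces to showing $\lim_{t\to c}\DS(\phi_t,\phi)=0$ for every finite volume set $\mathcal S\subset M$ (applied to each $S_n$).

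To prove that reduced claim I would fix such an $\mathcal S$ and, using the sequential characterization of limits, an arbitrary sequence $t_k\to c$ with $t_k\in(a,b)\setminus\{c\}$, and show $\DS(\phi_{t_k},\phi)=\varint_{\mathcal S}p^d_{\phi_{t_k}\phi}\,\dv\to 0$ by dominated convergence: the constant function $1$ is $\mu_\V$-integrable on $\mathcal S$ since $\v{\mathcal S}<\infty$ and dominates every (measurable) integrand $p^d_{\phi_{t_k}\phi}$, so the only point to verify is that $p^d_{\phi_{t_k}\phi}(x)\to 0$ for $\mu_\V$-almost every $x\in\mathcal S$. For $x\in B$, the hypothesis $B\subset\lic B_t$ gives $x\in B_{t_k}\cap B$ for all large $k$, so $p^d_{\phi_{t_k}\phi}(x)=\min\{1,d(\phi_{t_k}(x),\phi(x))\}$ eventually, and this tends to $0$ because $\phi(x)=\lim_{t\to c}\phi_t(x)$ by the definition of $\phi$ in Definition \ref{def_ptwsae}. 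For $x\notin\lsc B_t$ one also has $x\notin B$ (since $B\subset\lic B_t\subset\lsc B_t$) and $x\notin B_{t_k}$ for all large $k$, so $p^d_{\phi_{t_k}\phi}(x)=0$ eventually. The remaining points lie in $\lsc B_t\setminus B$, which has $\mu_\V$-measure zero by hypothesis; hence a.e.\ convergence holds and dominated convergence applies. Since the sequence $t_k$ was arbitrary, $\lim_{t\to c}\DS(\phi_t,\phi)=0$, and feeding this back into the displayed formula and the tail estimate yields $\Dfull(\phi_t,\phi)<\varepsilon$ for $t$ near $c$, i.e.\ $\phi_t\rightd\phi$ as $t\to c$.

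The step I expect to be the main obstacle is the almost-everywhere pointwise convergence of the penalty functions in the middle paragraph: one must carefully reconcile the continuous-parameter sets $\lic B_t$ and $\lsc B_t$ with the chosen sequence $t_k\to c$ and pin down that --- apart from finitely many $k$ --- the only place where $p^d_{\phi_{t_k}\phi}(x)$ can fail to vanish is $\lsc B_t\setminus B$, which is null by the defining hypothesis of $\rightae$. Everything else is routine: the interchange of the countable sum with the integral, the tail estimate on the series, and the measurability of $p^d_{\phi_{t_k}\phi}$; the one piece of mild care is to phrase the limit $t\to c$ through sequences, so that the dominated convergence theorem applies even when $c$ is an endpoint of $(a,b)$.
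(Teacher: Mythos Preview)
Your proof is correct and follows essentially the same approach as the paper: show that the penalty functions $\pt$ converge to $0$ pointwise almost everywhere by splitting $M$ into $B$, $M\setminus\lsc B_t$, and the null set $\lsc B_t\setminus B$, then apply dominated convergence with the constant $1$ as dominant. The only cosmetic difference is that the paper applies dominated convergence once, directly against the probability measure $\nuS$ on all of $M$ (so the constant $1$ is integrable because $\nuS(M)=1$), whereas you first expand $\Dfull$ via Proposition~\ref{prop_ddef2}, estimate the tail, and apply dominated convergence on each finite-volume piece $\mathcal S_n$; your extra care in passing to sequences $t_k\to c$ is a welcome precision that the paper leaves implicit.
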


Now that we have a good understanding of $(\m,\D)$ we will show one possible application of this metric. There are many different directions one could head from this point, but since there is research already being done regarding the convergence properties of families of embeddings ~\cite{PeVN2012,PeVN2012sharp} we will pursue an application in that field. We will use $\D$ to study families of embeddings which do not converge to an embedding and quantify how far they are from converging.  With this in mind we make the following definitions.

\begin{definition}
 Define $\e \subset \mp$ by $$\e = \{ (\phi, B) \in \mp \mid B\subset M \text{ is a submanifold and } \phi: B \se N \text{ is an embedding}\}$$ and define $\et \subset \m$ by $$\et = \{[\phi,B] \in \m \mid [\phi, B] \text{ has a representative in }\e\}.$$
\end{definition}
\begin{definition}\label{def_rpert}
 Let $a,b\in\R$ with $a<b$, $\varepsilon\geq0$, and $\fab\in\Fab$.  We say that a smooth family $\{(\widetilde{\phi_t}, \widetilde{B_t})\}_{t\in(a,b)}\in\Fab$ is a {\it convergent $\varepsilon$-perturbation (with respect to $\Dfull$)} of $\fab$ if
 \begin{enumerate}
  \item there exists $(\widetilde{\phi}, \widetilde{B})\in \e$ such that $\widetilde{\phi_t} \rightae \widetilde{\phi}$ as $t\to a$;
  \item\label{part_rdomain} $\li \widetilde{B_t} \subset \widetilde{B}$ and $B_t = \widetilde{B_t}$ for all $t\in(a,b)$;
  \item for all $t \in (a,b)$ we have that $\Dfull(\phi_t, \widetilde{\phi_t}) \leq \varepsilon$.
 \end{enumerate}
We define the {\it radius of convergence} of a family via
$$\rfull : \Fab \rightarrow [0,\infty]$$
 where $$\rfull(\fab ) := \text{inf}\left\{\varepsilon\geq 0 : \begin{array}{l} \text{there exists a smooth convergent}\\ \text{$\varepsilon$-perturbation of $\fab$}\end{array}\right\}.$$
\end{definition}
Notice in part \eqref{part_rdomain} of Definition \ref{def_rpert} we make some requirements on the domains.  This is so that we cannot simply remove from the domains a set of measure zero which includes the singular points. It is important to notice that, unlike many of the properties we have introduced so far, $\rfull$ does depend on the choice of $\dist$ and $\Snfull$. We are most interested in the $\rfull=0$ case, where an arbitrarily small perturbation can cause the family to converge to an embedding. It is natural to wonder whether a family can have radius of convergence zero but still not converge to any element of $\mp$ (including those elements which are not embeddings).  The following Theorem addresses this.

\begin{lettertheorem}\label{thm_r0convg}
 Let $a,b\in\R$ with $a<b$, $\f_{t\in(a,b)}$ be such that $(\phi_t, B_t) \in \mp$ for each $t\in(a,b)$, and let $\rfull$ be the radius of convergence function associated to a complete Riemannian distance $d$ on $N$ and an exhaustion of finite volume nested sets $\Snfull$ of $M$.  If $\rfull(\fab)=0$ then there exists $(\phi, B) \in \mp$ unique up to $\sim$ such that $\phi_t \rightd \phi$ as $t\to a$. Furthermore, the converse holds if there exists some $T\in(a,b)$ such that $s<t<T$ implies $B_s \subset B_t$.
\end{lettertheorem}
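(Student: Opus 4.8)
I would prove the two implications separately; the forward one follows quickly from completeness, while the converse needs a construction.

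For the forward implication, assume $\rfull(\fab)=0$ and extract a Cauchy sequence of embeddings. Since the infimum defining $\rfull$ is zero, for each $n\in\N$ there is a smooth convergent $\tfrac1n$-perturbation $\{(\widetilde\phi^n_t,B_t)\}_{\tab}$ of $\fab$ (defined on the same domains $B_t$, by condition~(2) of Definition~\ref{def_rpert}), with limiting embedding $(\widetilde\phi^n,\widetilde B^n)\in\e$, so that $\widetilde\phi^n_t\rightae\widetilde\phi^n$ as $t\to a$ and $\Dfull(\phi_t,\widetilde\phi^n_t)\le\tfrac1n$ for all $\tab$. First, Theorem~\ref{thm_convg} upgrades this to $\widetilde\phi^n_t\rightd\widetilde\phi^n$ as $t\to a$. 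Next, for $m,n\in\N$ the triangle inequality gives $\Dfull(\widetilde\phi^n,\widetilde\phi^m)\le\Dfull(\widetilde\phi^n,\widetilde\phi^n_t)+\tfrac1n+\tfrac1m+\Dfull(\widetilde\phi^m_t,\widetilde\phi^m)$, and letting $t\to a$ yields $\Dfull(\widetilde\phi^n,\widetilde\phi^m)\le\tfrac1n+\tfrac1m$; hence $\{[\widetilde\phi^n,\widetilde B^n]\}_n$ is Cauchy in $(\m,\Dfull)$, which is complete by Theorem~\ref{thm_complete}, so it converges to some $[\phi,B]$. Finally a $3\varepsilon$-estimate, bounding $\Dfull(\phi_t,\phi)$ by $\Dfull(\phi_t,\widetilde\phi^n_t)+\Dfull(\widetilde\phi^n_t,\widetilde\phi^n)+\Dfull(\widetilde\phi^n,\phi)$ (take $n$ large, then $t$ near $a$), gives $\phi_t\rightd\phi$ as $t\to a$; uniqueness up to $\sim$ is immediate, since two such limits $\phi,\phi'$ satisfy $\Dfull(\phi,\phi')\le\Dfull(\phi,\phi_t)+\Dfull(\phi_t,\phi')\to 0$.

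For the converse I would first do the domain bookkeeping that the nesting hypothesis makes available. Suppose $B_s\subset B_t$ whenever $s<t<T$, and that $\phi_t\rightd\phi$ as $t\to a$ for some $(\phi,B)\in\mp$; set $B_*:=\bigcap_{a<t<T}B_t$. The nesting gives $\lia B_t=\lsa B_t=B_*$: the inclusion $B_*\subset\lia B_t$ is clear, and if $x\in\lsa B_t$ then for every $s\in(a,T)$ there is $t<s$ with $x\in B_t\subset B_s$, so $x\in B_*$. Since the penalty function equals $1$ on $B_t\vartriangle B$, we have $\Dfull(\phi_t,\phi)\ge\vnu{B_t\vartriangle B}$, whence $\vnu{B_t\vartriangle B}\to 0$; as also $\vnu{B_t\vartriangle B_*}\to 0$ (the $B_t$ decrease to $B_*$ and $\vnu{M}=1$), we get $\vnu{B\vartriangle B_*}=0$ and may take $B=B_*$. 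It then remains to produce, for each $\varepsilon>0$, a smooth convergent $\varepsilon$-perturbation: using $\Dfull(\phi_s,\phi)\to 0$ and $\vnu{B_s\setminus B_*}\to 0$, choose $t_\varepsilon\in(a,T)$ with $\Dfull(\phi_s,\phi)<\varepsilon/3$ and $\vnu{B_s\setminus B_*}<\varepsilon/3$ for all $s\in(a,t_\varepsilon]$, and choose an embedding $(\widetilde\phi,B_*)\in\e$ with $\Dfull(\phi,\widetilde\phi)<\varepsilon/3$; then build a smooth family $\{(\widetilde\phi_t,B_t)\}_{\tab}$ which is a smooth $\varepsilon$-approximation of $\phi_t$ for $t\ge t_\varepsilon$, which on $B_t\setminus B_*$ stays a smooth $\varepsilon/3$-approximation of $\phi_t$ for all $t$, and which on $B_*$, as $t$ decreases from $t_\varepsilon$ to $a$, is driven onto $\widetilde\phi$ (for instance by interpolating along short paths in $N$, or by letting $\widetilde\phi$ take over $B_*$ along an exhausting family of subsets). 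Then $\widetilde\phi_t\rightae\widetilde\phi$ as $t\to a$ with limiting domain $B_*=\lia\widetilde B_t$, so the first two conditions of Definition~\ref{def_rpert} hold; the third follows by splitting $M$ into $B_*$, $B_t\setminus B_*$, and the rest, bounding the middle piece by $\vnu{B_t\setminus B_*}<\varepsilon/3$ and, on $B_*$, applying the triangle inequality with $\Dfull(\phi_t,\phi)<\varepsilon/3$ and $\Dfull(\phi,\widetilde\phi)<\varepsilon/3$. Letting $\varepsilon\to 0$ gives $\rfull(\fab)=0$.

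The forward half is essentially bookkeeping on top of Theorems~\ref{thm_complete} and~\ref{thm_convg}; I expect the main obstacle to be the construction just outlined. The perturbation is pinned to the domains $B_t$ and its limiting domain is pinned to $B_*$, which leaves little room to maneuver, and the argument needs two genuinely nontrivial ingredients: that $\phi|_{B_*}$ can be approximated in $\Dfull$ by an embedding of $B_*$ into $N$ — the point at which whatever hypothesis is really required on $N$ (or on the sets $B_t$) must be used — and that this approximating embedding can actually be reached through a \emph{smooth} family while keeping $\Dfull(\phi_t,\widetilde\phi_t)\le\varepsilon$ for every $t\in(a,b)$, not merely for $t$ near $a$. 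Making the interpolation legitimately smooth and controlling it over an arbitrary complete target $N$ is where the real effort lies.
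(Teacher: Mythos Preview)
Your forward direction is correct and uses essentially the same ingredients as the paper (Theorem~\ref{thm_convg} and completeness), though the paper argues more directly: it fixes $\varepsilon>0$, takes a single convergent $\nicefrac{\varepsilon}{4}$-perturbation $\{\widetilde{\phi_t^\delta}\}$ with a.e.\ limit $\widetilde{\phi^\delta}$, and uses the four-term triangle inequality
\[
\DS(\phi_t,\phi_s)\le \DS(\phi_t,\widetilde{\phi_t^\delta})+\DS(\widetilde{\phi_t^\delta},\widetilde{\phi^\delta})+\DS(\widetilde{\phi^\delta},\widetilde{\phi_s^\delta})+\DS(\widetilde{\phi_s^\delta},\phi_s)
\]
to conclude that $\fab$ itself is Cauchy in every $\DS$, hence in $\Dfull$. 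Your detour through the Cauchy sequence $\{\widetilde\phi^n\}$ works, but is unnecessary once you notice the original family is already Cauchy.

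For the converse you have correctly identified the difficulty in your own outline, and that difficulty is real: you have no mechanism to produce an embedding $(\widetilde\phi,B_*)\in\e$ close to $\phi$, nor to build the smooth interpolating family landing on it. The paper sidesteps both problems with a trick you are missing. Rather than approximate the (possibly singular) limit $\phi$, it uses a member of the \emph{original family} as the target embedding. Concretely: choose $T_1\in(a,T)$ so that $s,t<T_1$ implies $\Dfull(\phi_t,\phi_s)<\varepsilon$, pick a smooth bump $\mathcal{b}$ with $\mathcal{b}\equiv 1$ on $(a,\tfrac{T_1+a}{2})$ and $\mathcal{b}\equiv 0$ on $[T_1,b)$, set
\[
f(t)=(1-\mathcal{b}(t))\,t+\mathcal{b}(t)\,\tfrac{T_1+a}{2},\qquad \widetilde{\phi_t}=\phi_{f(t)}\big|_{B_t}.
\]
The nesting hypothesis guarantees $B_t\subset B_{f(t)}$ so the restriction is defined; the family is smooth because $f$ is smooth and the original family is; and for $t$ near $a$ it is literally constant equal to $\phi_{(T_1+a)/2}$, giving a.e.\ convergence to that map. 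The $\varepsilon$-bound $\Dfull(\phi_t,\widetilde{\phi_t})<\varepsilon$ follows from the Cauchy estimate. No approximation-of-$\phi$-by-embeddings is needed, and the ``interpolation'' is just a reparametrisation in $t$. This is the missing idea; with it the converse becomes a short paragraph rather than the hard construction you anticipate.
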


This theorem is important in the study of families with $\rfull=0$ because to characterize such families we may assume right away that there exists some limit $\phi_0$ and study its properties in order to understand the family we started with. In the final section we explore some ideas about the open questions about this function $\rfull$ including restricting to embeddings with specific properties and considering a converse of Theorem \ref{thm_r0convg} in the case in which the domains do not eventually shrink or stabilize.

\subsection{Outline of paper} In Section \ref{sec_def} we define the space of maps over which we will be working, we define the distance $\mathcal{D}$, and we prove several of its desirable properties including some parts of Theorem \ref{thm_complete}.  In Section \ref{subsec_prep} we prove a variety of Lemmas that will be needed in Section \ref{subsec_complete} to prove the rest of Theorem \ref{thm_complete}.  Next, in Section \ref{sec_convg} we examine the convergence properties of $\mathcal{D}$ and prove Theorem \ref{thm_convg}.  Finally, in Section \ref{sec_fam} we use what we have established in the preceding sections to study families of embeddings which do not converge to an embedding and prove Theorem \ref{thm_r0convg}. In our last section, Section \ref{sec_ques}, we comment on how the ideas from this paper can be used to further study such families.

\section{Definitions and preliminaries}\label{sec_def}
\subsection{Defining the distance}
Let $M$ be an orientable smooth manifold with volume form $\V$ and let $N$ be a smooth Riemannian manifold with natural distance function $\dist$. Again let $\vplain$ be the measure on $M$ induced by the volume form $\V$.  In this section we will prove all but the completeness statement in Theorem \ref{thm_complete}, which is postponed to Section \ref{sec_compl}. Recall the different notions of equivalent metrics.  The use of these terms varies, but for this paper we will use the following conventions.
\begin{definition}\label{def_metriceqv}
 Let $d_1$ and $d_2$ be metrics on a set $X$. Then we say that $d_1$ and $d_2$ are:
 \begin{enumerate}
  \item {\it topologically equivalent} if they induce the same topology on $X$;
  \item {\it weakly equivalent} if they induce the same topology on $X$ and exactly the same collection of Cauchy sequences;
  \item {\it strongly equivalent} if there exist $c_1, c_2 >0$ such that $$c_1 d_1 \leq d_2 \leq c_2 d_1.$$
 \end{enumerate}

\end{definition}

Now we define the following function.

\begin{definition}\label{def_da}
 Let $(\phi, B_\phi), (\psi, B_\psi) \in \mp$. For $\alpha > 0$ and a finite volume subset $\mathcal{S} \subset M$ define $$\mathcal{D}_\mathcal{S}^{d,\alpha} ((\phi, B_\phi), (\psi, B_\psi))= \varint_{\mathcal{S}} p^{d,\alpha}_{\phi\psi}\, \dv$$ where $$ p_{\phi\psi}^{d,\alpha}(x) = \left\{ \begin{array}{ll} \alpha & \text{if } x\in B_\phi \vartriangle B_\psi; \\ \text{min}\{ \alpha, d(\phi(x), \psi(x))\} & \text{if }x \in B_\phi \cap B_\psi; \\ 0 & \text{otherwise}.\end{array}\right.$$
\end{definition}

In Definition \ref{def_da} we have a family of functions depending on the choice of $\alpha>0$, but in fact these will induce strongly equivalent metrics.

\begin{prop}\label{prop_da}
 Let $\mathcal{S}$ be a finite volume subset of $M$.  If $\beta>\alpha>0$ then $$\mathcal{D}^{d,\alpha}_\mathcal{S} \leq \mathcal{D}^{d,\beta}_\mathcal{S} \leq \frac{\beta}{\alpha} \mathcal{D}^{d,\alpha}_\mathcal{S}.$$
\end{prop}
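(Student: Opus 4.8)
The plan is to reduce the whole statement to a pointwise inequality between the penalty functions and then integrate. Concretely, I would first show that for every $x \in M$ (not just for $x \in \mathcal{S}$) one has
$$p^{d,\alpha}_{\phi\psi}(x) \;\leq\; p^{d,\beta}_{\phi\psi}(x) \;\leq\; \tfrac{\beta}{\alpha}\, p^{d,\alpha}_{\phi\psi}(x).$$
Since $\mathcal{S}$ has finite volume and all three integrands are bounded by $\beta$, each of $\mathcal{D}^{d,\alpha}_\mathcal{S}$, $\mathcal{D}^{d,\beta}_\mathcal{S}$, $\tfrac{\beta}{\alpha}\mathcal{D}^{d,\alpha}_\mathcal{S}$ is finite, and integrating the pointwise chain over $\mathcal{S}$ against $\dv$ yields the proposition by monotonicity of the integral. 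So the entire content lies in the pointwise statement.

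For the pointwise statement I would split into the three cases of Definition \ref{def_da}. When $x \notin B_\phi \cup B_\psi$ all three quantities vanish and there is nothing to check. When $x \in B_\phi \vartriangle B_\psi$ the three quantities are $\alpha$, $\beta$, $\beta$, so the inequalities read $\alpha \leq \beta \leq \beta$, which holds since $\alpha < \beta$. The only case with actual content is $x \in B_\phi \cap B_\psi$: writing $r := d(\phi(x), \psi(x)) \geq 0$, I must verify
$$\min\{\alpha, r\} \;\leq\; \min\{\beta, r\} \;\leq\; \tfrac{\beta}{\alpha}\min\{\alpha, r\}.$$
The left inequality is immediate from $\alpha \leq \beta$. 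For the right inequality I would distinguish the subcase $r \leq \alpha$, where the left side equals $r$ and the right side equals $\tfrac{\beta}{\alpha} r \geq r$ because $\tfrac{\beta}{\alpha} \geq 1$, from the subcase $r > \alpha$, where $\min\{\alpha,r\} = \alpha$ so the right side equals $\beta$, which dominates $\min\{\beta,r\}$. This exhausts all cases.

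I do not expect a genuine obstacle: the only point requiring a moment of care is the rightmost inequality in the intersection case, where one has to notice that as soon as $r$ exceeds $\alpha$ the quantity $\min\{\alpha, r\}$ saturates at $\alpha$, so multiplying by $\tfrac{\beta}{\alpha}$ produces exactly $\beta$, which still beats $\min\{\beta, r\}$ no matter how large $r$ becomes. Everything else is a routine case check, using nothing about $M$, $N$, $d$, or $\V$ beyond $\v{\mathcal{S}} < \infty$.
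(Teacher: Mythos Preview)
Your proof is correct and follows essentially the same approach as the paper: both reduce to the pointwise inequality between penalty functions and integrate, with the only substantive step being $\min\{\beta,r\}\leq\tfrac{\beta}{\alpha}\min\{\alpha,r\}$. The paper handles that step via the one-line observation $\min\{\beta,r\}\leq\min\{\beta,\tfrac{\beta}{\alpha}r\}=\tfrac{\beta}{\alpha}\min\{\alpha,r\}$ rather than your explicit subcase split, but the content is identical.
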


\begin{proof}
 Notice
 \begin{align*}
  \mathcal{D}_\mathcal{S}^{d,\alpha} (\phi, \psi) &= \varint_{\mc{B_\phi \cap B_\psi \cap \mathcal{S}}} \text{min}\{ \alpha, d(\phi, \psi)\}\, \dv + \alpha \mu_\V\big((B_\phi \vartriangle B_\psi) \cap \mathcal{S}\big)\\
  &\leq \varint_{\mc{B_\phi \cap B_\psi \cap \mathcal{S}}} \text{min}\{ \beta, d(\phi, \psi)\}\, \dv + \beta \mu_\V\big((B_\phi \vartriangle B_\psi) \cap \mathcal{S}\big)\\
  &= \mathcal{D}_\mathcal{S}^{d,\beta} (\phi, \psi)\\
 \end{align*}
 
 and also notice that
 \begin{align*}
  \mathcal{D}_\mathcal{S}^{d,\beta} (\phi, \psi) &= \varint_{\mc{B_\phi \cap B_\psi \cap \mathcal{S}}} \text{min}\{ \beta, d(\phi, \psi)\}\, \dv + \beta \mu_\V\big((B_\phi \vartriangle B_\psi) \cap \mathcal{S}\big)\\
  &\leq \varint_{\mc{B_\phi \cap B_\psi \cap \mathcal{S}}} \text{min}\{ \beta, \frac{\beta}{\alpha} d(\phi, \psi)\}\, \dv + \beta \mu_{\V}\big((B_\phi \vartriangle B_\psi) \cap \mathcal{S}\big)\\
  &= \frac{\beta}{\alpha} \mathcal{D}_\mathcal{S}^{d,\alpha} (\phi, \psi).
 \end{align*}
\end{proof}

So Proposition \ref{prop_da} means that the choice of $\alpha>0$ will not matter when we use $\D_{\mathcal{S}}^{d,\alpha}$ to define a metric, so henceforth we will assume that $\alpha = 1$.  That is, for any finite volume subset $\mathcal{S} \subset M$ we have $\DS$ as defined in Definition \ref{def_ds}. In the above proof we wrote out the definition of $\DS$ in a way which did not explicitly use the penalty function $\p$.  We can now notice that there is an equivalent definition of $\DS$ which will be useful for several of the proofs.  
\begin{prop}\label{prop_ddef2}
  Let $M$ and $N$ be manifolds with a volume form $\V$ on $M$, $\dist$ a distance on $N$ induced by a Riemannian metric, $\mathcal{S}\subset M$ a compact subset, and $\Snfull$ a nested exhaustion of $M$ by finite volume sets. The function $\DS$ given in Definition \ref{def_ds} can be written
 $$\DS (\phi, \psi) = \varint_{\mc{B_\phi \cap B_\psi \cap \mathcal{S}}} \text{min}\{1,\dist(\phi, \psi)\}\, \dv + \vs{B_\phi \vartriangle B_\psi}.$$
 and the function $\Dfull$ from Definition \ref{def_big} satisfies
 $$\Dfull (\phi, \psi) = \sum_{n=1}^{\infty} 2^{-n} \frac{\mathcal{D}^\dist_{\mathcal{S}_n}(\phi,\psi)}{\v{\mathcal{S}_n}}.$$
\end{prop}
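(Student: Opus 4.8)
The plan is to verify both displayed formulas directly from the definitions, unwinding the penalty function $\p$ in the first and the measure $\nuS$ in the second; the only genuine analytic point is an interchange of an integral with an infinite sum in the second part.

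For the first identity, I would begin from $\DS(\phi,\psi)=\varint_{\mathcal{S}}\p\dv$ and split $\mathcal{S}$ into the three measurable pieces on which $\p$ is defined by cases: $\mathcal{S}\cap(B_\phi\vartriangle B_\psi)$, $\mathcal{S}\cap B_\phi\cap B_\psi$, and $\mathcal{S}\setminus(B_\phi\cup B_\psi)$. These form a disjoint decomposition of $\mathcal{S}$, and they are measurable because $B_\phi$ and $B_\psi$ are measurable; moreover $\p$ restricted to the middle piece is measurable since $x\mapsto d(\phi(x),\psi(x))$ is the composition of the measurable map $x\mapsto(\phi(x),\psi(x))$ with the continuous function $d$, so $x\mapsto\min\{1,d(\phi(x),\psi(x))\}$ is measurable and bounded. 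On the last piece $\p\equiv0$; on the first piece $\p\equiv1$, contributing $\vplain\big((B_\phi\vartriangle B_\psi)\cap\mathcal{S}\big)=\vs{B_\phi\vartriangle B_\psi}$; on the middle piece the integrand is exactly $\min\{1,\dist(\phi,\psi)\}$. Finite additivity of the integral over this disjoint union then yields the claimed expression for $\DS$. (The compactness of $\mathcal{S}$ assumed in the statement is used only to guarantee $\vplain(\mathcal{S})<\infty$, as required in Definition \ref{def_ds}.)

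For the second identity, recall $\Dfull(\phi,\psi)=\varint_M\p\,\dn$ with $\nuS(A)=\sum_{n=1}^\infty 2^{-n}\v{A\cap\mathcal{S}_n}/\v{\mathcal{S}_n}$. Each set function $A\mapsto\v{A\cap\mathcal{S}_n}$ is a finite measure — the restriction of $\vplain$ to the finite-volume set $\mathcal{S}_n$ — so $\nuS$ is a countable nonnegative combination of finite measures with total mass $1$. Since $\p\geq0$ is measurable and bounded, the monotone convergence theorem (equivalently, Tonelli for a series of nonnegative terms) lets me move the sum out of the integral, so that $\varint_M\p\,\dn=\sum_{n=1}^\infty 2^{-n}\v{\mathcal{S}_n}^{-1}\varint_M\p\,\mathrm{d}\big(\vplain(\,\cdot\,\cap\mathcal{S}_n)\big)=\sum_{n=1}^\infty 2^{-n}\v{\mathcal{S}_n}^{-1}\varint_{\mathcal{S}_n}\p\dv$, and the inner integral is precisely $\mathcal{D}^\dist_{\mathcal{S}_n}(\phi,\psi)$ by Definition \ref{def_ds}.

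The whole argument is routine; the step that deserves a word of care is the interchange of $\varint_M$ with $\sum_n$. I would make it precise by writing $\p$ against $\nuS$ as the increasing pointwise limit of the partial sums $\sum_{n=1}^{N}2^{-n}\v{\mathcal{S}_n}^{-1}\p\,\mathbf{1}_{\mathcal{S}_n}$ of nonnegative integrands and invoking monotone convergence; no growth or integrability hypothesis is needed since $\p\leq1$ everywhere and $\nuS(M)=1$, so every quantity in sight is finite. Note that the proof uses neither completeness of $N$ nor the nestedness of the exhaustion, so the proposition is purely a bookkeeping reformulation of Definitions \ref{def_ds} and \ref{def_big}.
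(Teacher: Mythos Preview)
Your proposal is correct and is exactly the routine verification one would carry out; the paper itself simply states that ``this proposition has a trivial proof'' and gives no further argument. Your write-up is more detailed than necessary but contains no errors, and the only nontrivial step---the sum--integral interchange---is handled correctly via nonnegativity and monotone convergence.
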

This proposition has a trivial proof.  Before the next Proposition we have a definition.
\begin{definition}
 Suppose $a,b\in\R$ with $a<b$ and $c\in[a,b]$.  For a set $X$ and a function $$F: X \times X \to [0, \infty]$$ we say that a family $\{a_t\}_{t\in(a,b)}\subset X$ is {\it Cauchy with respect to $F$ as $t\to c$} if for all $\varepsilon>0$ there exists some $\delta>0$ such that $s,t\in(c-\delta, c+\delta)\cap (a,b)$ implies $F(a_t, a_s)<\varepsilon.$
\end{definition}
Below are several important properties of $\Dfull,$ which is defined in Definition \ref{def_big}.

\begin{prop}\label{prop_dconv}
 Let $a,b\in\R$ with $a<b$, $\fab \in \F$, and $\phi,  \psi \in \mp$. Further suppose that $d$ is a metric on $N$ induced by a Riemannian metric and $\Snfull$ is an exhaustion of $M$ by nested finite volume sets. The function $\Dfull$ has the following properties. 
 \begin{enumerate}
  \item $\fab$ is Cauchy with respect to $\Dfull$ as $t\to c$ iff it is Cauchy with respect to $\DS$ as $t\to c$ for all compact $\mathcal{S}\subset M$.
  \item $\lim_{t \rightarrow c} \Dfull(\phi_t, \phi) = 0$ if and only if $\phi_t \rightds \phi$ as $t\to c$ for all compact $\mathcal{S} \subset M$.
  \item $\Dfull(\phi, \psi) = 0$ if and only if $\DS (\phi, \psi) = 0$ for all compact $\mathcal{S} \subset M$ if and only if $\mu_\V\big((B_\phi \vartriangle B_\psi)\cap\mathcal{S}\big) = 0$ for every compact $\mathcal{S} \subset M$ and $\phi = \psi$ almost everywhere on $B_\phi \cap B_\psi$.
 \end{enumerate}
\end{prop}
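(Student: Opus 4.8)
The plan is to reduce all three parts to elementary measure theory by exploiting two features of $\Dfull$: that by Proposition \ref{prop_ddef2} it is the series $\Dfull(\phi,\psi)=\sum_{n=1}^\infty 2^{-n}\D^d_{\mathcal{S}_n}(\phi,\psi)/\v{\mathcal{S}_n}$ of nonnegative terms, each bounded by $2^{-n}$ since $0\le\p\le1$, and that for any finite-volume sets $A\subset A'$ one has $0\le\D^d_{A'}(\phi,\psi)-\D^d_{A}(\phi,\psi)\le\v{A'\setminus A}$, again because $0\le\p\le1$. The first feature lets me compare $\Dfull$ with each individual $\D^d_{\mathcal{S}_n}$ and truncate the series after finitely many terms at the cost of a geometric tail; the second lets me compare $\DS$ for an arbitrary compact $\mathcal{S}$ with $\D^d_{\mathcal{S}\cap\mathcal{S}_n}$, the discrepancy being $\le\v{\mathcal{S}\setminus\mathcal{S}_n}\to0$ by nestedness of the exhaustion and finiteness of $\v{\mathcal{S}}$. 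Since $M$ is $\sigma$-compact (it admits a compact exhaustion $\{K_j\}$), the same device also approximates $\D^d_{\mathcal{S}_n}$ by $\D^d_{\mathcal{S}_n\cap K_j}\le\D^d_{K_j}$, which moves from the not-necessarily-compact $\mathcal{S}_n$ to genuinely compact sets.

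For part (3), since the terms of the series are nonnegative, $\Dfull(\phi,\psi)=0$ iff $\D^d_{\mathcal{S}_n}(\phi,\psi)=0$ for every $n$, which by Proposition \ref{prop_ddef2} is equivalent to $\vn{B_\phi\vartriangle B_\psi}=0$ and $\phi=\psi$ $\vplain$-a.e.\ on $B_\phi\cap B_\psi\cap\mathcal{S}_n$. Because $\bigcup_n\mathcal{S}_n=M$ and a countable union of null sets is null, holding this for all $n$ is equivalent to $\v{B_\phi\vartriangle B_\psi}=0$ together with $\phi=\psi$ a.e.\ on $B_\phi\cap B_\psi$; and, once more by Proposition \ref{prop_ddef2}, these global conditions are in turn equivalent to $\DS(\phi,\psi)=0$ for every finite-volume, hence every compact, $\mathcal{S}$. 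The only point to watch here is that recovering ``$\phi=\psi$ a.e.\ on all of $B_\phi\cap B_\psi$'' from ``$\DS(\phi,\psi)=0$ for every compact $\mathcal{S}$'' again uses $M=\bigcup_jK_j$.

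Parts (1) and (2) have the same shape and I would run them in parallel. For the forward direction, assume $\fab$ is Cauchy with respect to $\Dfull$ as $t\to c$ (for (2), that $\Dfull(\phi_t,\phi)\to0$), fix a compact $\mathcal{S}$, and fix $n$. From $\Dfull\ge 2^{-n}\D^d_{\mathcal{S}_n}/\v{\mathcal{S}_n}$ the family $\{\phi_t\}$ is Cauchy with respect to $\D^d_{\mathcal{S}_n}$ (resp.\ $\D^d_{\mathcal{S}_n}(\phi_t,\phi)\to0$); then $\D^d_{\mathcal{S}}(\phi_t,\phi_s)\le\D^d_{\mathcal{S}_n}(\phi_t,\phi_s)+\v{\mathcal{S}\setminus\mathcal{S}_n}$, and choosing $n$ so that $\v{\mathcal{S}\setminus\mathcal{S}_n}<\varepsilon/2$ finishes it. For the converse, assume $\fab$ is Cauchy with respect to $\DS$ (resp.\ $\phi_t\rightds\phi$) for every compact $\mathcal{S}$. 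Given $\varepsilon>0$, pick $N$ with $2^{-N}<\varepsilon/2$ so that the tail $\sum_{n>N}2^{-n}\D^d_{\mathcal{S}_n}/\v{\mathcal{S}_n}\le\sum_{n>N}2^{-n}<\varepsilon/2$. For each $n\le N$, use the compact exhaustion $\{K_j\}$: since $\v{\mathcal{S}_n\setminus K_j}\to0$ and $\D^d_{\mathcal{S}_n}(\phi_t,\phi_s)\le\D^d_{K_j}(\phi_t,\phi_s)+\v{\mathcal{S}_n\setminus K_j}$, the hypothesis applied to the compact $K_j$ makes $\D^d_{\mathcal{S}_n}(\phi_t,\phi_s)$ small for $s,t$ near $c$; combining the finitely many such bounds with the tail bound yields $\Dfull(\phi_t,\phi_s)<\varepsilon$ (resp.\ the analogous statement with $\phi$).

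I expect the only real care needed to be the order of quantifiers in the converse directions of (1) and (2): one must fix the truncation index $N$ first, then for each of the finitely many $n\le N$ select the approximating compactum $K_{j(n)}$, and only afterwards invoke the Cauchy/convergence hypothesis on those finitely many compacta to extract a single $\delta$. Everything else is a routine consequence of $0\le\p\le1$, monotonicity of $\mathcal{S}\mapsto\int_{\mathcal{S}}\p\,\dv$ in $\mathcal{S}$, and continuity of $\vplain$ along nested sequences of finite total measure.
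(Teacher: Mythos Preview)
Your proposal is correct and follows the same skeleton as the paper's proof: both rely on the series representation $\Dfull=\sum_{n}2^{-n}\D^d_{\mathcal{S}_n}/\v{\mathcal{S}_n}$ from Proposition~\ref{prop_ddef2}, the termwise bound $0\le\p\le1$, and the approximation $\DS\le\D^d_{\mathcal{S}_I}+\v{\mathcal{S}\setminus\mathcal{S}_I}$ for a suitably large index $I$. The forward implications in (1)--(2) and the argument for (3) are essentially identical in spirit; your version of (3) passes through the global statement $\v{B_\phi\vartriangle B_\psi}=0$ via a countable union of null sets, while the paper instead appeals to inner regularity to find a compact set of positive measure on which $\phi\neq\psi$, but these are interchangeable.

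The one place you are noticeably more careful than the paper is in the converse direction of (1) and (2). The paper simply declares the converse ``easy'' and does not write it out; you observe that the exhaustion sets $\mathcal{S}_n$ are only assumed to have finite volume (not to be compact), and so the hypothesis ``Cauchy with respect to $\DS$ for every \emph{compact} $\mathcal{S}$'' does not immediately apply to them. Your fix---introducing a genuine compact exhaustion $\{K_j\}$ of $M$ and using $\D^d_{\mathcal{S}_n}\le\D^d_{K_j}+\v{\mathcal{S}_n\setminus K_j}$---closes this gap cleanly and is a real improvement in rigor over the paper's treatment. Your remark about fixing the truncation index $N$ first and then selecting finitely many compacta $K_{j(n)}$ before extracting a single $\delta$ is exactly the right order of quantifiers.
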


\begin{proof}
 Let $\varepsilon>0$ and fix some compact subset $\mathcal{S} \subset M$.  Then $\mathcal{S} \subset \bigcup_{n=1}^\infty \mathcal{S}_n=M$ and since $\mathcal{S}$ has finite volume and the $\mathcal{S}_n$ are nested we can find some $I \in \N$ such that $\v{\mathcal{S} \setminus \mathcal{S}_I}< \varepsilon.$ This means that $$\DS \leq \D_{\mathcal{S}_I}^d+\varepsilon.$$ Now that we have this fact we will prove the three properties.
 
 (1) It is sufficient to assume that $a=c=0$ and $b=1$.  Suppose that $\fI$ is Cauchy with respect to $\Dfull$ as $t\to 0$ and fix some compact $\mathcal{S} \subset M$.  Let $\varepsilon>0.$
 
 From the above fact we can find some $I \in \N$ such that $\DS \leq \D_{\mathcal{S}_I}^d+\nicefrac{\varepsilon}{2}.$ Now, since this family is Cauchy with respect to $\Dfull$ we can find some $\delta \in \I$ such that $s,t < \delta$ implies $$\Dfull (\phi_t, \phi_s) < \frac{\varepsilon}{2^{I+1} \v{\mathcal{S}_I}}.$$  Using the expression for $\Dfull$ from Proposition \ref{prop_ddef2} we have that $$\sum_{n=1}^{\infty} 2^{-n} \frac{\mathcal{D}_{\mathcal{S}_n}^d(\phi_t,\phi_s)}{\v{\mathcal{S}_n}}<\frac{\varepsilon}{2^{I+1} \v{S_I}}$$ which in particular means $$ 2^{-I} \frac{\mathcal{D}_{\mathcal{S}_I}^d(\phi_t,\phi_s)}{\v{\mathcal{S}_I}}<\frac{\varepsilon}{2^{I+1} \v{S_I}} $$ so $\mathcal{D}_{\mathcal{S}_I}^d (\phi_t, \psi_t) < \nicefrac{\varepsilon}{2}$.
 
 Finally, we have that for $s,t<\delta$
 \begin{align*}
  \DS (\phi_t, \phi_s) &\leq \D_{\mathcal{S}_I}^d (\phi_t, \phi_s) + \frac{\varepsilon}{2}\\
  &< \varepsilon.
 \end{align*}
 The converse is easy and the proof of (2) is similar to the proof of (1).
 
 (3)  Suppose $\Dfull (\phi, \psi) = 0$ and fix some compact $\mathcal{S} \subset M$.  Notice that this means that $\D_{\mathcal{S}_n}^d(\phi,\psi)=0$ for all $n$. For any $\varepsilon>0$ from the fact above we know we can choose some $I$ such that
 \begin{align*}
  \DS (\phi, \psi) &\leq \D_{\mathcal{S}_I}^d(\phi, \psi)+\varepsilon\\
  &= \varepsilon
 \end{align*}
 so we may conclude that $\DS(\phi,\psi)=0.$
 
 Next, we assume that $\DS(\phi,\psi)=0$ for all compact $\mathcal{S} \subset M$.  Clearly this implies that $\vs{B_\phi \vartriangle B_\psi} =0$ because this is a term in $\DS$.  Suppose that there is some set of positive measure in $B_\phi \cap B_\psi$ for which $\phi \neq \psi$. Then since manifolds are inner regular there exists some compact subset of positive measure $K$ on which they are not equal.  But this implies that $\D_K^d (\phi, \psi)\neq 0$.
 
 Now since $\vs{B_\phi \vartriangle B_\psi} = 0$ for every compact $\mathcal{S} \subset M$ and $\phi = \psi$ almost everywhere on $B_\phi \cap B_\psi$ it is clear that $\Dfull (\phi, \psi) = 0$.
\end{proof}

\begin{cor}\label{cor_dzero}
 Let $\Snfull$ be an exhaustion of $M$ and let $\dist$ be a metric on $N$ induced by a Riemannian metric.  Suppose that $(\phi, B_\phi), (\psi, B_\psi)\in\mp$ such that $\Dfull (\phi, \psi) = 0$.  Then for any such parameters $\{\mathcal{S}_n'\}_{n=1}^\infty$ and $\dist'$ we have that $\D_{\{\mathcal{S}_n'\}}^{\dist'}(\phi, \psi)=0$ as well.
\end{cor}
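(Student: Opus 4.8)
The plan is to reduce the statement to the pointwise characterization of when $\Dfull$ vanishes, which is exactly Proposition~\ref{prop_dconv}(3), and then to observe that this characterization makes no reference to the exhaustion and only a harmless reference to the metric. First I would apply Proposition~\ref{prop_dconv}(3) to the hypothesis: since $\Dfull(\phi,\psi)=0$ for the exhaustion $\Snfull$ and the Riemannian distance $\dist$, that proposition gives that $\mu_\V\big((B_\phi \vartriangle B_\psi)\cap \mathcal{S}\big)=0$ for every compact $\mathcal{S}\subset M$ and that $\phi=\psi$ almost everywhere on $B_\phi \cap B_\psi$ (with respect to $\mu_\V$, which is fixed throughout, being induced by the fixed volume form $\V$).

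Next I would note that neither of these two conditions depends on the chosen exhaustion at all, and that the second one does not genuinely depend on the metric either: for any metric $\dist'$ on $N$ one has $\dist'(\phi(x),\psi(x))=0$ precisely when $\phi(x)=\psi(x)$, so the statement ``$\phi=\psi$ almost everywhere on $B_\phi\cap B_\psi$'' is literally the same assertion whether phrased via $\dist$ or via $\dist'$. Hence the right-hand side of the equivalence in Proposition~\ref{prop_dconv}(3) holds verbatim for the new data $\{\mathcal{S}_n'\}_{n=1}^\infty$ and $\dist'$ as well.

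Finally I would apply Proposition~\ref{prop_dconv}(3) in the reverse direction, now with the exhaustion $\{\mathcal{S}_n'\}_{n=1}^\infty$ and the Riemannian distance $\dist'$: since the two conditions on the right-hand side of that equivalence have just been established, we conclude $\D_{\{\mathcal{S}_n'\}}^{\dist'}(\phi,\psi)=0$, which is what we wanted. There is essentially no obstacle here; the only point that requires a moment's attention is confirming that the pointwise characterization in Proposition~\ref{prop_dconv}(3) is truly metric-independent, which comes down to the trivial fact that every metric separates points. In effect Proposition~\ref{prop_dconv}(3) does all of the work, and this corollary is precisely what justifies the well-definedness claimed in part~\eqref{part_dconv1} of Definition~\ref{def_big}.
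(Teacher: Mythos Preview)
Your proposal is correct and matches the paper's approach: the paper states Corollary~\ref{cor_dzero} without proof, treating it as an immediate consequence of Proposition~\ref{prop_dconv}(3), and your argument spells out exactly that deduction. The only substantive content is your observation that the third equivalent condition in Proposition~\ref{prop_dconv}(3) is independent of both $\Snfull$ and $\dist$, which is precisely why the corollary follows.
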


Given the new information in Proposition \ref{prop_dconv} we can prove the following important Proposition.

\begin{prop}\label{prop_ddist}
For any choice of an exhaustion of $M$ by finite volume sets $\Snfull$ we have that $\Dfull$ is well defined and is a distance function on $\m$.  Also, if $\{\mathcal{S}_n'\}_{n=1}^\infty$ is another such choice of exhaustion then $\Dfull$ and $\mathcal{D}^\dist_{\{\mathcal{S}_n'\}}$ are weakly equivalent metrics on $\m$.
\end{prop}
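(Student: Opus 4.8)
The plan is to establish in turn that $\Dfull$ descends to the quotient $\m$, that it is a metric there, and finally that two exhaustions yield weakly equivalent metrics; the last point will follow almost formally from Proposition~\ref{prop_dconv}, and the only step with real content is the triangle inequality. For \textbf{well-definedness}, suppose $(\phi,B_\phi)\sim(\phi',B_{\phi'})$ and $(\psi,B_\psi)\sim(\psi',B_{\psi'})$. By Proposition~\ref{prop_dconv}(3), $\vplain\big((B_\phi\vartriangle B_{\phi'})\cap\mathcal{S}\big)=0$ for every compact $\mathcal{S}\subset M$ and $\phi=\phi'$ $\vplain$-a.e.\ on $B_\phi\cap B_{\phi'}$, and similarly for $\psi,\psi'$; since $M$ admits a compact exhaustion, $B_\phi\vartriangle B_{\phi'}$ and $B_\psi\vartriangle B_{\psi'}$ are globally $\vplain$-null. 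Checking the three cases in Definition~\ref{def_penalty} then shows $\p$ and $p^\dist_{\phi'\psi'}$ agree off a $\vplain$-null set, and since $\nuS\ll\vplain$ the integrals defining $\Dfull(\phi,\psi)$ and $\Dfull(\phi',\psi')$ coincide. Hence $\Dfull$ is a well-defined function on $\m\times\m$.

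For the \textbf{metric axioms}: nonnegativity is immediate, and finiteness holds because $\p\le1$ gives $\D^\dist_{\mathcal{S}_n}\le\v{\mathcal{S}_n}$, hence $\Dfull\le\sum_n2^{-n}=1$; symmetry follows from symmetry of $\dist$ and of $\vartriangle$; and on $\m$ the relation $\Dfull(\phi,\psi)=0$ is, by Definition~\ref{def_big}(3), exactly $\phi\sim\psi$, which gives separation of points. The remaining axiom is the triangle inequality. Using the series expression of Proposition~\ref{prop_ddef2} it suffices to prove it for each $\D^\dist_{\mathcal{S}_n}$, and since $\D^\dist_{\mathcal{S}}$ is the $\vplain$-integral over $\mathcal{S}$ of a penalty function it is enough to verify the pointwise bound $p^\dist_{\phi\chi}(x)\le p^\dist_{\phi\psi}(x)+p^\dist_{\psi\chi}(x)$ for all $x\in M$ and all $\phi,\psi,\chi\in\mp$. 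I would do this by a case split on which of $B_\phi,B_\psi,B_\chi$ contain $x$: if $x\notin B_\phi\cup B_\chi$ the left-hand side is $0$; if $x$ lies in exactly one of $B_\phi,B_\chi$ it is $1$ while at least one summand on the right is already $1$; and if $x\in B_\phi\cap B_\chi$ then either $x\in B_\psi$, in which case the inequality is the (elementary) triangle inequality for the truncated metric $\min\{1,\dist(\cdot,\cdot)\}$, or $x\notin B_\psi$, in which case both summands on the right equal $1$. Integrating recovers the triangle inequality for $\D^\dist_{\mathcal{S}}$, and then for the nonnegative combination $\Dfull$.

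For \textbf{weak equivalence} of $\Dfull$ and $\D^\dist_{\{\mathcal{S}_n'\}}$, the observation is that the right-hand conditions in Proposition~\ref{prop_dconv}(1) and (2) --- being Cauchy, respectively convergent, with respect to $\D^\dist_{\mathcal{S}}$ for \emph{every} compact $\mathcal{S}\subset M$ --- do not refer to any choice of exhaustion. The proof of that Proposition uses only that for each compact $\mathcal{S}$ and $\varepsilon>0$ there is an index $I$ with $\D^\dist_{\mathcal{S}}\le\D^\dist_{\mathcal{S}_I}+\varepsilon$, together with $\D^\dist_{\mathcal{S}_I}\le2^I\v{\mathcal{S}_I}\,\Dfull$; the same argument applies verbatim to ordinary sequences in $\m$. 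Hence a sequence is $\Dfull$-Cauchy iff it is $\D^\dist_{\mathcal{S}}$-Cauchy for every compact $\mathcal{S}$ iff it is $\D^\dist_{\{\mathcal{S}_n'\}}$-Cauchy, and likewise $\Dfull$-convergence (with its limits) agrees with $\D^\dist_{\{\mathcal{S}_n'\}}$-convergence. Since metric spaces are first countable, two metrics with the same convergent sequences and limits induce the same topology, and agreement of Cauchy sequences then upgrades this to weak equivalence in the sense of Definition~\ref{def_metriceqv}. I expect the main obstacle to be the case analysis for the pointwise penalty inequality underlying the triangle inequality; the topological half is essentially a rephrasing of Proposition~\ref{prop_dconv}.
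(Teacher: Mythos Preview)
Your proposal is correct and follows essentially the same route as the paper: both arguments reduce the triangle inequality to the pointwise bound $p^\dist_{\phi\chi}\le p^\dist_{\phi\psi}+p^\dist_{\psi\chi}$ and then invoke Proposition~\ref{prop_dconv}(1)--(2) to get weak equivalence. The only cosmetic difference is that the paper derives well-definedness \emph{from} the triangle inequality (if $\phi\sim\rho$ then $\Dfull(\phi,\psi)\le\Dfull(\phi,\rho)+\Dfull(\rho,\psi)=\Dfull(\rho,\psi)$ and symmetrically), whereas you argue it directly by showing the penalty functions agree $\vplain$-a.e.; both are fine.
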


\begin{proof}
Fix some $\Snfull$ a compact exhaustion of $M$ and let $\phi, \rho, \psi \in \mp$.  It is a straightforward exercise to show that $$\p (x) \leq p^d_{\phi\rho}(x) + p^d_{\rho\psi}(x)$$ for each $x\in M$ and thus $$\Dfull (\phi, \psi) \leq \Dfull (\phi, \rho) + \Dfull (\rho, \psi).$$ It should be noted that this inequality would not hold without the minimum in $\p$.  From here we can see that if $\phi \sim \rho$ then $$\Dfull (\phi, \psi) \leq \Dfull (\rho, \psi)$$ and similarly the opposite inequality is true as well.  So $$\Dfull (\phi, \psi) = \Dfull (\rho, \psi)$$ and thus $\Dfull$ is well defined on $\m$.

Now $\Dfull$ is positive definite on $\m$ because it is positive on $\mp$ and by definition $\Dfull (\phi, \rho)=0$ implies $\phi \sim \rho$.  Since $\Dfull$ is well defined on $\m$ and satisfies the triangle inequality on $\mp$ we know that it satisfies the triangle inequality on $\m$ and similarly we know that $\Dfull$ is symmetric on $\m$.

Proposition \ref{prop_dconv} parts (1) and (2) characterize both convergent and Cauchy sequences of $\Dfull$ in a way which is independent of the choice of $\Snfull$.  This means that different choices of $\Snfull$ will produce weakly equivalent metrics $\Dfull$.
\end{proof}

\subsection{Independence of Riemannian structure} We have seen that $\m$ is a metric space with metric $\Dfull$ for any choice of compact exhaustion and the metric spaces for different choices of exhaustion are all weakly equivalent.  Now we will show that this construction is actually independent of the choice of Riemannian metric on $N$ as well.  For the remaining portion of the paper we will use $\left\| \cdot \right\|$ to denote the usual norm in $\R^k$ and $\drk$ to denote the usual distance on $\R^k$.  

\begin{lemma}\label{lem_intr}
 Fix any finite volume subset $\mathcal{S}\subset M$ and let $a,b,c\in\R$ with $a<b$ and $c\in[a,b]$.  Now let $\fab \in \F$ and $(\phi, B)\in \mp$. Suppose that $\phi_t \rightds \phi\in\mp$ as $t\to c$ and $\mathcal{R}: B \cap \mathcal{S} \rightarrow (0, \infty)$ is any function.  Then $$\lim_{t \rightarrow c} \v{\{ x\in B_t \cap B \cap \mathcal{S} \mid d(\phi(x), \phi_t(x)) > \mathcal{R}(x)\}}=0.$$
\end{lemma}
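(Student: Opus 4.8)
The plan is to convert the hypothesis $\phi_t\rightds\phi$, i.e. $\DS(\phi_t,\phi)\to 0$, into the vanishing of $\v{E_t}$, where I write $E_t:=\{x\in B_t\cap B\cap\mathcal{S}\mid d(\phi(x),\phi_t(x))>\mathcal{R}(x)\}$, by a Chebyshev estimate. The one obstruction to a direct application is that the threshold $\mathcal{R}(x)$ need not be bounded away from $0$, so the first step is to discard the region where $\mathcal{R}$ is tiny, which is harmless because $\mathcal{S}$ has finite volume and $\mathcal{R}$ never vanishes.

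First I would record what the hypothesis provides. By Proposition \ref{prop_ddef2},
\[
\DS(\phi_t,\phi)=\varint_{B_t\cap B\cap\mathcal{S}}\min\{1,d(\phi,\phi_t)\}\,\dv+\vs{B_t\vartriangle B},
\]
and since both terms are nonnegative, $\phi_t\rightds\phi$ as $t\to c$ forces in particular $\varint_{B_t\cap B\cap\mathcal{S}}\min\{1,d(\phi,\phi_t)\}\,\dv\to 0$ as $t\to c$.

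Next, fixing $\varepsilon>0$, I would use that $\mathcal{R}>0$ at every point of $B\cap\mathcal{S}$: the sets $G_m:=\{x\in B\cap\mathcal{S}\mid\mathcal{R}(x)<1/m\}$ decrease to $\varnothing$ as $m\to\infty$, and since $\v{\mathcal{S}}<\infty$, continuity of measure yields $\v{G_m}\to 0$, so one fixes $m\in\N$ with $\v{G_m}<\varepsilon/2$. On $E_t\setminus G_m$ one has $d(\phi(x),\phi_t(x))>\mathcal{R}(x)\geq 1/m$, hence $\min\{1,d(\phi(x),\phi_t(x))\}\geq 1/m$; thus $E_t\subseteq G_m\cup F_t^{(m)}$, where $F_t^{(m)}:=\{x\in B_t\cap B\cap\mathcal{S}\mid d(\phi(x),\phi_t(x))>1/m\}$, and Chebyshev's inequality gives
\[
\tfrac1m\,\v{F_t^{(m)}}\;\leq\;\varint_{F_t^{(m)}}\min\{1,d(\phi,\phi_t)\}\,\dv\;\leq\;\varint_{B_t\cap B\cap\mathcal{S}}\min\{1,d(\phi,\phi_t)\}\,\dv .
\]
The right-hand side tends to $0$ as $t\to c$ by the previous paragraph, so there is $\delta>0$ such that $\v{F_t^{(m)}}<\varepsilon/2$ whenever $t\in(a,b)$ with $|t-c|<\delta$; for such $t$ one gets $\v{E_t}\leq\v{G_m}+\v{F_t^{(m)}}<\varepsilon$. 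As $\varepsilon>0$ was arbitrary, $\lim_{t\to c}\v{E_t}=0$, which is the claim.

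I expect the only genuine point to be the first reduction: a single Chebyshev bound controls $\{x\mid d(\phi(x),\phi_t(x))>r\}$ only for a fixed radius $r$, whereas here the radius is the pointwise function $\mathcal{R}$, so one must first absorb the set $G_m$ on which $\mathcal{R}$ drops below $1/m$ into a small-measure error before applying the estimate with the now-uniform lower bound $1/m$. Everything else — measurability of $E_t$, $F_t^{(m)}$ and $G_m$, and the elementary integral inequalities — is routine.
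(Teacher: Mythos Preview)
Your proof is correct and follows essentially the same approach as the paper's: both split the exceptional set according to whether $\mathcal{R}$ exceeds a fixed threshold (your $G_m=\{\mathcal{R}<1/m\}$ is the complement of the paper's $D^n=\{\mathcal{R}>2^{-n}\}$), use finite volume of $\mathcal{S}$ and $\mathcal{R}>0$ to make the small-$\mathcal{R}$ part negligible, and apply a Chebyshev-type bound on the remainder. The only cosmetic difference is that the paper first passes through the inequality $\DS(\phi_t,\phi)\geq\int_{C_t}\min\{1,\mathcal{R}\}\,\dv$ before localizing, whereas you localize directly via the auxiliary set $F_t^{(m)}$.
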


\begin{proof}
 It is sufficient to prove for $a=c=0$ and $b=1$.  First, for $\tI$ let $C_t = \{ x\in B_t \cap B \cap \mathcal{S} \mid d(\phi(x), \phi_t(x)) > \mathcal{R}(x)\}.$ Since $C_t \subset \mathcal{S}$ we notice that
 \begin{align*}
  \DS (\phi_t, \phi) &\geq \varint_{C_t} \text{min}\{ 1, d(\phi, \phi_t)\}\, \dv\\
  &\geq \varint_{C_t} \text{min}\{1, \mathcal{R}\}\, \dv.
 \end{align*}
Now for each $n\in\N$ let $D^n = \{x \in B \cap \mathcal{S} \mid \mathcal{R}(x) > 2^{-n}\}$ and notice that
\begin{align*}
 \varint_{C_t} \text{min}\{1, \mathcal{R}\}\, \dv &\geq \varint_{\mc{D^n \cap C_t}} \text{min}\{1, \mathcal{R}\}\, \dv\\
 &\geq 2^{-n} \cdot \v{D^n \cap C_t}.
\end{align*}
Now combining the above facts we have that $\DS(\phi_t, \phi) \geq 2^{-n} \cdot \v{D^n \cap C_t}$ for any choice of $n\in\N$ so
\begin{equation}\label{eqn_limdn}
 \lim_{t \rightarrow 0} \v{D^n \cap C_t} = 0
\end{equation}
for all $n \in \N$.
 
Finally fix $\varepsilon>0$.  Since $\mathcal{R}(x) > 0$ for all $x\in B \cap \mathcal{S}$ we know that the collection $\{D^n\}_{n=1}^\infty$ covers $B \cap \mathcal{S}.$ Since $B \cap \mathcal{S}$ has finite volume we know there exists some $N \in \N$ such that $\v{(B \cap \mathcal{S}) \setminus D^N} < \nicefrac{\varepsilon}{2}.$ This implies that for all $\tI$ we have that $\v{C_t \setminus D^N}< \nicefrac{\varepsilon}{2}.$ By Equation \eqref{eqn_limdn} we conclude that we can choose some $T$ such that $t<T$ implies that $\v{C_t \cap D^N}<\nicefrac{\varepsilon}{2}.$  Now for $t<T$ we have that $\v{C_t} = \v{C_t \setminus D^N} + \v{C_t \cap D^N} < \varepsilon.$
\end{proof}

Now we show that any choice of continuous metric on $N$ will produce a weakly equivalent metric on $\m$.

\begin{lemma}\label{lem_equivmetric}
 Suppose that $d_1$ and $d_2$ are topologically equivalent metrics on $N$ and let $\Snfull$ be any exhaustion of $M$ by finite volume sets. Then $\DSn^{d_1}$ and $\DSn^{d_2}$ are topologically equivalent metrics on $\m$.
 \end{lemma}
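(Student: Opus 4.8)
The plan is to combine Proposition \ref{prop_dconv}(2) with Lemma \ref{lem_intr}: the pointwise‑varying radius function that Lemma \ref{lem_intr} permits is exactly what is needed to handle the fact that the modulus of continuity relating $d_1$ and $d_2$ need not be uniform over $N$. Since $\m$ is a metric space for each of the two metrics, two metrics on it are topologically equivalent precisely when they have the same convergent sequences, so by symmetry it suffices to prove one implication: if $\phi_n\to\phi$ with respect to $\DSn^{d_1}$ then $\phi_n\to\phi$ with respect to $\DSn^{d_2}$. Applying Proposition \ref{prop_dconv}(2) with $d=d_1$ to unpack the hypothesis and with $d=d_2$ to repackage the conclusion (its proof works verbatim for sequences in place of one‑parameter families as $t\to c$), this reduces to: for every compact $\mathcal{S}\subset M$, if $\DSone(\phi_n,\phi)\to0$ then $\DStwo(\phi_n,\phi)\to0$.

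So fix a compact $\mathcal{S}\subset M$, assume $\DSone(\phi_n,\phi)\to0$, and put $V:=\v{\mathcal{S}}<\infty$; if $V=0$ there is nothing to prove. By Proposition \ref{prop_ddef2},
\[
\DStwo(\phi_n,\phi)=\varint_{B_{\phi_n}\cap B_\phi\cap\mathcal{S}}\min\{1,d_2(\phi_n,\phi)\}\,\dv+\vs{B_{\phi_n}\vartriangle B_\phi}.
\]
The second summand occurs unchanged as a summand of $\DSone(\phi_n,\phi)$ (it does not involve the metric on $N$), so it tends to $0$. For the integral, fix $\varepsilon>0$. Because $d_1$ and $d_2$ induce the same topology on $N$, the identity $(N,d_1)\to(N,d_2)$ is continuous, so for each $x\in B_\phi\cap\mathcal{S}$ there is $\mathcal{R}(x)>0$ with $d_1(\phi(x),y)\le\mathcal{R}(x)\Rightarrow d_2(\phi(x),y)<\varepsilon/(2V)$ for all $y\in N$; using separability of $N$ the function $\mathcal{R}:B_\phi\cap\mathcal{S}\to(0,\infty)$ may be taken countably‑valued and measurable (subordinate to a countable subcover of $N$ by $d_1$‑balls witnessing such an implication). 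Split $B_{\phi_n}\cap B_\phi\cap\mathcal{S}$ into the measurable sets $F_n:=\{x:d_1(\phi(x),\phi_n(x))\le\mathcal{R}(x)\}$ and $E_n:=\{x:d_1(\phi(x),\phi_n(x))>\mathcal{R}(x)\}$. On $F_n$ the integrand is $<\varepsilon/(2V)$, so its integral is $\le\varepsilon/2$; on $E_n$ the integrand is $\le1$, so its integral is $\le\v{E_n}$, and Lemma \ref{lem_intr}, applied to $d_1$, the convergence $\DSone(\phi_n,\phi)\to0$, and the function $\mathcal{R}$, gives $\v{E_n}\to0$. Hence the integral is $<\varepsilon$ for all large $n$, and combined with the previous sentence $\DStwo(\phi_n,\phi)\to0$, as required.

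The one genuinely delicate point is the absence of a uniform modulus of continuity between $d_1$ and $d_2$ on the possibly wild, noncompact target $N$ — which is precisely why Lemma \ref{lem_intr} was set up with an arbitrary radius function rather than a constant. The remaining ingredients, namely choosing the above $\mathcal{R}$ to be measurable via separability of $N$ and checking that Lemma \ref{lem_intr} and Proposition \ref{prop_dconv}(2) transfer from one‑parameter families to ordinary sequences, are routine.
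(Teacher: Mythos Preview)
Your proof is correct and follows essentially the same route as the paper: reduce via Proposition~\ref{prop_dconv}(2) to a single finite-volume $\mathcal{S}$, produce a pointwise radius function $\mathcal{R}$ witnessing the continuity of $\mathrm{id}:(N,d_1)\to(N,d_2)$ at each $\phi(x)$, split the domain according to whether $d_1(\phi,\phi_n)\le\mathcal{R}$ or not, and invoke Lemma~\ref{lem_intr} on the bad set. The only notable difference is that you pause to arrange $\mathcal{R}$ to be measurable (which the paper silently assumes) and you work with sequences rather than one-parameter families; both are cosmetic.
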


\begin{proof}
 Fix finite volume $\mathcal{S} \subset M$.  If we show $\DSone$ and $\DStwo$ are topologically equivalent then we have proved the lemma by Proposition \ref{prop_dconv}. It is sufficient to show that the same families indexed by $\I$ converge so suppose $\fI\in\F$ and $(\phi_0,B_0)\in\mp$ such that $\phi_t \xrightarrow{\DSone} \phi_0$ as $t\to 0$ and we will show that $\phi_t \xrightarrow{\DStwo} \phi_0$ as $t\to 0$. Fix $\varepsilon>0$ and without loss of generality assume that $\varepsilon < \v{\mathcal{S}}.$
 Let $$C_t^2 = \left\{ x\in B_t \cap B_0 \cap \mathcal{S} \mid d_2(\phi_0(x), \phi_t(x)) > \frac{\varepsilon}{3 \v{\mathcal{S}}} \right\}.$$  Let $b_{y_0}^i (r) = \{y \in N \mid d_i(y, y_0)<r\}$ for $i=1,2.$ Since $d_1$ and $d_2$ are weakly equivalent metrics for each $y\in N$ there exists some radius $r_y>0$ such that the ball with respect to $d_1$ of radius $r_y$ centered at $y$ is a subset of the ball with respect to $d_2$ of radius $\nicefrac{\varepsilon}{3\v{\mathcal{S}}}$ centered at $y$. Thus there exists some $\mathcal{R}:B_0 \cap \mathcal{S} \rightarrow (0,\infty)$ such that 
 \begin{equation}\label{eqn_bd1d2}
  b_{\phi_0(x)}^1\big(\mathcal{R}(x)\big) \subset b_{\phi_0(x)}^2 \left(\frac{\varepsilon}{3\v{\mathcal{S}}}\right) \text{ for all }x\in B_0\cap\mathcal{S}.
 \end{equation}
 Define $C_t^1 = \{ x \in B_t\cap B_0 \cap \mathcal{S} \mid d_1(\phi_0 (x), \phi_t (x))>\mathcal{R}(x)\}$ and notice that Equation \eqref{eqn_bd1d2} implies that $C_t^2 \subset C_t^1.$  By Lemma \ref{lem_intr} since $\phi_t \xrightarrow{\DSone} \phi_0$ as $t\to 0$ we know that $\lim_{t \rightarrow 0} \v{C_t^1} = 0$ and so we can conclude that $$\lim_{t \rightarrow 0} \v{C_t^2} = 0.$$
 
 Now we can find some $T\in\I$ such that if $t<T$ then $\v{C_t^2} < \nicefrac{\varepsilon}{3}$ and also $\vs{B_t \vartriangle B_0}< \nicefrac{\varepsilon}{3}$.  Then
 \begin{align*}
  \DStwo (\phi_t, \phi_0) &= \varint_{\mc{B_t \cap B_0 \cap \mathcal{S}}} \text{min}\{1, d_2(\phi_t, \phi_0 \}\, \dv + \vs{B_t \vartriangle B_0}\\
  &\leq \varint_{\mc{(B_t \cap B_0 \cap \mathcal{S})\setminus C_t^2}} \text{min}\{1, d_2(\phi_t, \phi_0\}\, \dv + \varint_{\mc{C_t^2}} \text{min}\{1, d_2(\phi_t, \phi_0\}\, \dv + \vs{B_t \vartriangle B_0}\\
  &\leq \varint_{\mathcal{S}} \frac{\varepsilon}{3\v{\mathcal{S}}}\, \dv + \v{C_t^2} + \vs{B_t \vartriangle B_0}\\
  &< \nicefrac{\varepsilon}{3} + \nicefrac{\varepsilon}{3} + \nicefrac{\varepsilon}{3} = \varepsilon.
 \end{align*}
\end{proof}
We conclude this section with the following lemma.
\begin{lemma}\label{lem_equivriemmetric}
 Let $\Snfull$ be a nested exhaustion of $M$ by finite volume sets and suppose that $d_1$ and $d_2$ are metrics on $N$ induced by smooth Riemannian metrics.  Then $\DSn^{d_1}$ and $\DSn^{d_2}$ are topologically equivalent metrics on $\m$.
\end{lemma}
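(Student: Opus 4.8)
The plan is to reduce this immediately to Lemma \ref{lem_equivmetric}: that lemma already tells us that topologically equivalent metrics on $N$ yield topologically equivalent metrics $\DSn$ on $\m$, so it suffices to prove that $d_1$ and $d_2$ are topologically equivalent metrics on $N$. I will prove the stronger and cleaner statement that \emph{any} metric $d_g$ on $N$ induced by a smooth Riemannian metric $g$ induces precisely the manifold topology of $N$; applying this to both $d_1$ and $d_2$ shows each induces the manifold topology, hence they are topologically equivalent, and Lemma \ref{lem_equivmetric} then finishes the proof. (This fact is standard and could simply be cited, e.g. from any Riemannian geometry textbook, but the argument is short enough to sketch.)

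To see that $d_g$ induces the manifold topology, I would argue locally at an arbitrary point $p\in N$. Choose a chart $\varphi:U\to\R^n$ with $\varphi(p)=0$ and pick $r>0$ with $\overline{B(0,r)}\subset\varphi(U)$; write $K=\varphi^{-1}(\overline{B(0,r)})$, a compact set. Continuity of the coefficients of $g$ in these coordinates together with compactness of $K$ gives constants $0<c_1\le c_2$ with $c_1\|v\|^2\le g_x(v,v)\le c_2\|v\|^2$ for every $x\in K$ and every tangent vector $v$ (identified with its coordinate representation). Integrating along curves, the $g$-length of any curve whose image lies in $K$ is between $\sqrt{c_1}$ and $\sqrt{c_2}$ times the Euclidean length of its coordinate image. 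The one point requiring care is that curves nearly realizing $d_g(p,q)$ for $q$ near $p$ could leave the chart; this is handled in the usual way: any curve issuing from $p$ and leaving $\varphi^{-1}(B(0,r))$ must cross $\varphi^{-1}(\partial B(0,r))$ and hence has $g$-length at least $\sqrt{c_1}\,r$, so for $q$ with $d_g(p,q)<\sqrt{c_1}\,r$ every sufficiently short curve from $p$ to $q$ stays in $K$ and the comparison applies. It follows that $\sqrt{c_1}\,\|\varphi(q)\|\le d_g(p,q)\le\sqrt{c_2}\,\|\varphi(q)\|$ for all such $q$, so near $p$ the distance $d_g$ is squeezed between constant multiples of the pulled-back Euclidean distance; therefore the small $d_g$-balls about $p$ and the coordinate balls about $p$ are mutually cofinal neighborhood bases, i.e. the metric topology of $d_g$ agrees with the manifold topology at $p$. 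Since $p$ is arbitrary, $d_g$ induces the manifold topology on $N$.

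Applying the previous paragraph to $g_1$ and $g_2$ shows that $d_1$ and $d_2$ both induce the manifold topology of $N$, hence are topologically equivalent; by Lemma \ref{lem_equivmetric} the metrics $\DSn^{d_1}$ and $\DSn^{d_2}$ are topologically equivalent on $\m$. The only genuinely delicate step is the ``a minimizing-ish curve might leave the chart'' issue, which the lower bound $\sqrt{c_1}\,r$ on the $g$-length of any escaping curve dispatches; everything else is routine compactness plus a linear-algebra comparison of quadratic forms.
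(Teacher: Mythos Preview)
Your proposal is correct and follows essentially the same route as the paper: both reduce immediately to Lemma~\ref{lem_equivmetric} by observing that any distance induced by a smooth Riemannian metric on $N$ induces the manifold topology, so $d_1$ and $d_2$ are topologically equivalent. The only difference is that the paper simply asserts this standard fact (phrasing it as ``$d_1$ and $d_2$ are continuous with respect to the given topology on $N$''), whereas you supply the local chart-comparison argument in detail.
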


\begin{proof}
 Both $d_1$ and $d_2$ are continuous with respect to the given topology on $N$.  This means that they are topologically equivalent metrics and so by Lemma \ref{lem_equivmetric} the result follows.
\end{proof}

\begin{remark}\label{rmk_mcomp}
 If $M$ is finite volume, such as in the case that $M$ is compact, then there is an obvious preferred choice to make when choosing the exhaustion, namely simply $\{M\}$ itself. In such a case we will always use $$\D^d_M (\phi, \psi)= \varint_M \p \dv = \varint_{\mc{B_\phi \cap B_\psi}} \text{min}\{1, d(\phi, \psi)\} \dv + \v{B_\phi \vartriangle B_\psi}.$$ There are also no choices now when defining convergent $\varepsilon$-perturbations or the radius of convergence except for the choice of metric on $N$.
\end{remark}

\subsection{A representative example} To conclude this section will will work out an important example which will be referenced throughout the paper.

\begin{figure}[b]
 \centering
 \includegraphics[height=160pt]{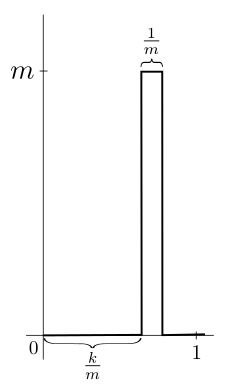}
  \caption{An image of $\Phi_{m,k}$.}
 \label{fig_genwave}
\end{figure} 

\begin{example}\label{ex_wave}
Let $\Phi_{m, k}: \I \to \R$ by $$\Phi_{m,k} (x) = m \cdot \chi_{(\nicefrac{k}{m},\nicefrac{k+1}{m})}(x)$$ (shown in Figure \ref{fig_genwave}) for $k, m \in \N$ with $k<m$ where $\chi_\mathcal{S}$ is the indicator function for the set $\mathcal{S}\subset \I$. We can see that $$\varint_{\mc{\I}} \Phi_{m,k} = 1$$ for all possible values of $k$ and $m$. We will use these functions to construct an example which is similar to the ``traveling wave'' example that is common in introductory analysis ~\cite{Folland} except that our example changes height so it always integrates to $1$.

Consider the sequence $$\phi_1 = \Phi_{0,1}, \phi_2 = \Phi_{0,2}, \phi_3 = \Phi_{1,2}, \phi_4 = \Phi_{0,3}, \phi_5 = \Phi_{1,3}, \phi_6 = \Phi_{2,3}, \phi_7 = \Phi_{0,4}, \ldots$$ (as shown in Figure \ref{fig_waves}) and let $$\phi_0 (x) = 0\text{ for all }x\in\I.$$\begin{figure}[h]
 \centering
 \includegraphics[height=130pt]{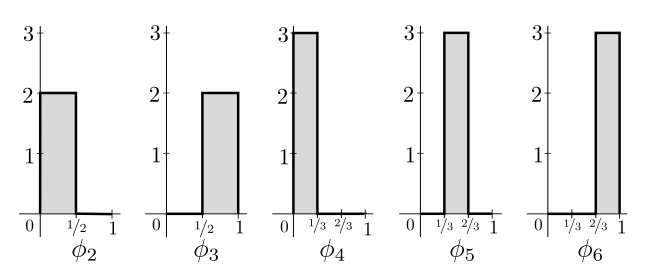}
 \caption{A few terms of $\{\phi_n\}$.  It can be seen that each integrates to 1 and the ``traveling waves'' pass over every point infinitely many times, so pointwise convergence is impossible.}
 \label{fig_waves}
\end{figure} Notice that this sequence does not converge pointwise to $\phi_0$ for any point $x\in\I$.  Also notice that since the integral of any element in this sequence is $1$ we can conclude that this sequence does not converge in $L^1$ (or $L^p$ for any $p\in[1,\infty]$) either (as is mentioned in Remark \ref{rmk_lp}), but it will converge with respect to $\D$.  This is because the measure of values in the domain which get sent to a number other than zero is becoming arbitrarily small, so we can conclude that $$\lim_{n \to \infty} \D_{\I}^{\dist_\R}(\phi_n, \phi_0) = 0.$$ This example shows a case in which we have a family which does not behave well pointwise almost everywhere or with respect to the $L^p$ norm, but it does behave well with respect to $\D$.

Of course, if we replace the indicator function with a bump function we can produce a sequence of smooth functions which has the same essential properties as these functions. In fact, for this example we have considered a sequence of functions instead of a continuous family of functions because it made it easier to describe the sequence, but we could easily extend this sequence to a smooth (see Definition \ref{def_smooth}) family of smooth embeddings of $\I$ into $\I \times \R$ indexed by $\tI$ which has the same properties.
\end{example}

\section{Completeness of \texorpdfstring{$\m$}{the metric space}}\label{sec_compl}
\subsection{Preparation}\label{subsec_prep}

Below is a collection of various technical Lemmas which are needed for Section \ref{subsec_complete}. In this section we will frequently use the alternative expression for $\D$ given in Proposition \ref{prop_ddef2}. 
\begin{lemma}\label{lem_l1comp}
 Let $A \subset M$ be a measurable finite volume set and let $a,b,c\in\R$ such that $a<b$ and $c\in[a,b]$. Suppose that some family of measurable functions $\{f_t: A \rightarrow \R^k\}_\tab$, is Cauchy with respect to $\varint_A \left\| f_t - f_s \right\|\, \dv$ as $t\to c$.  Then there exists some $f:A \rightarrow \R^k$ such that $f_t \xrightarrow{\mathcal{D}_\mathcal{A}^{\drk}} f$ as $t\to c$ where $\drk$ is the usual metric on $\R^k$.
\end{lemma}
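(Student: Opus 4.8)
The plan is to deduce this from completeness of the Banach space $L^1(A,\mu_\V;\R^k)$, which the paper has already flagged as the route to completeness of $\m$. One technicality must be cleared first: a priori the individual $f_t$ need not be $\mu_\V$-integrable, only their mutual differences for $s,t$ near $c$ are (apply the Cauchy hypothesis with $\varepsilon = 1$ to get $\delta_0 > 0$ with $\varint_A\|f_t - f_s\|\,\dv < 1$ whenever $s,t \in (c-\delta_0,c+\delta_0)\cap(a,b)$), so $L^1$-completeness cannot be applied to the $f_t$ directly. I would fix some $t_0 \in (c-\delta_0,c+\delta_0)\cap(a,b)$ and set $g_t := f_t - f_{t_0}$: then $g_t \in L^1(A,\mu_\V;\R^k)$ for all $t \in (c-\delta_0,c+\delta_0)\cap(a,b)$, the family $\{g_t\}$ has the same difference function $g_t - g_s = f_t - f_s$ and so is still Cauchy as $t\to c$, and $\mathcal{D}_A^{\drk}$ is insensitive to adding a common measurable function to both arguments (the penalty function only sees $\|g_t - g_s\|$). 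Hence it is enough to find a $\mathcal{D}_A^{\drk}$-limit $g$ of $\{g_t\}$ and then take $f := g + f_{t_0}$.

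Next I would pass to a sequence: pick $t_n \to c$ with $t_n \in (c-\delta_0,c+\delta_0)\cap(a,b)$; by the Cauchy hypothesis $\{g_{t_n}\}$ is Cauchy in $L^1(A,\mu_\V;\R^k)$, hence converges there to some $g$, which I regard as a genuine measurable function $A \to \R^k$ (pick a representative and set it to $0$ on a $\mu_\V$-null set), so $\varint_A\|g_{t_n} - g\|\,\dv \to 0$. To upgrade this to $\mathcal{D}_A^{\drk}$-convergence of the whole family, note that since $g_t$ and $g$ share the domain $A$ the penalty function of Definition \ref{def_ds} reduces on $A$ to $x \mapsto \min\{1,\|g_t(x)-g(x)\|\}$, so
$$\mathcal{D}_A^{\drk}(g_t, g) = \varint_A \min\{1, \|g_t - g\|\}\,\dv \le \varint_A \|g_t - g\|\,\dv,$$
using $\min\{1,x\}\le x$ for $x\ge 0$. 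Given $\varepsilon > 0$, choose $\delta \le \delta_0$ from the Cauchy hypothesis so that $\varint_A\|f_t - f_s\|\,\dv < \varepsilon/2$ for $s,t\in(c-\delta,c+\delta)\cap(a,b)$, then choose $n$ with $t_n$ in that interval and $\varint_A\|g_{t_n}-g\|\,\dv < \varepsilon/2$; for every $t\in(c-\delta,c+\delta)\cap(a,b)$ the triangle inequality for $\varint_A\|\cdot\|\,\dv$ gives $\mathcal{D}_A^{\drk}(g_t,g)\le \varint_A\|g_t - g_{t_n}\|\,\dv + \varint_A\|g_{t_n}-g\|\,\dv < \varepsilon$, i.e. $g_t\xrightarrow{\mathcal{D}_A^{\drk}} g$ and hence $f_t\xrightarrow{\mathcal{D}_A^{\drk}} f$ as $t\to c$.

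The step I expect to be the only genuine obstacle is the integrability reduction in the first paragraph: without subtracting a fixed $f_{t_0}$ one cannot invoke $L^1$-completeness, because the hypothesis controls only differences near $c$. Everything after that is routine bookkeeping --- translating the ``as $t\to c$'' Cauchy and convergence statements into statements about the sequence $\{t_n\}$ and back, plus the elementary bound $\min\{1,x\}\le x$ relating $\mathcal{D}_A^{\drk}$ to the $L^1$ distance.
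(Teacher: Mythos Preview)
Your proof is correct and shares the paper's core idea --- reduce to $L^1$-completeness and then use $\min\{1,x\}\le x$ to bound $\mathcal{D}_A^{\drk}$ by the $L^1$ distance --- but the execution differs in two respects worth noting. First, the paper argues componentwise: it writes $f_t=(f_t^1,\ldots,f_t^k)$, observes $\varint_A|f_t^j-f_s^j|\,\dv\le\varint_A\|f_t-f_s\|\,\dv$, applies completeness of the scalar-valued space $L^1(A)$ to each coordinate separately to obtain $f^j$, and then reassembles via $\|f_t-f\|\le\sum_j|f_t^j-f^j|$; you instead invoke completeness of the vector-valued $L^1(A,\mu_\V;\R^k)$ in one stroke, which is cleaner. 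Second, and more substantively, you explicitly reduce to the case where the functions lie in $L^1$ by subtracting a fixed $f_{t_0}$, whereas the paper passes directly to ``$\{f_t^j\}_{t\in(0,1)}$ is Cauchy in $L^1(A)$'' without addressing whether the individual $f_t^j$ are integrable --- which, as you correctly observe, the hypothesis does not guarantee. Your translation step patches this gap, so your argument is in fact slightly more complete than the paper's; the sequence-to-family bookkeeping you spell out is likewise left implicit there.
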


\begin{proof}
 It is sufficient to show the result in the case that $a=c=0$ and $b=1$. For $\tI$ we know that $f_t$ maps into $\R^k$ so we may write it into components.  Write $$f_t(x) = (f^1_t(x), f^2_t(x), \ldots, f^k_t(x)).$$  Notice for any fixed $j\in\{1,2,\ldots,k\}$ that
 \begin{align*}
  \varint_A \left\| f_t - f_s \right\|\, \dv &= \varint_A \left( \sum_{i=1}^k (f_t^i - f_s^i)^2 \right)^{\nicefrac{1}{2}}\, \dv\\
  &\geq \varint_A \left| f_t^j - f_s^j \right|\, \dv
 \end{align*}
so we conclude that $\{f_t^j\}_\tI$ is Cauchy in $L^1(A)$ for each $j\in\{1,2,\ldots,k\}$.  Since $L^1(A)$ is complete we know for each $j\in\{1,2,\ldots,k\}$ there exists some function $f^j: A \rightarrow \R$ such that $$\lim_{t\rightarrow 0}\varint_A \left| f_t^j - f^j \right|\, \dv=0.$$ So we define $f (x) = (f^1(x), f^2(x), \ldots, f^k(x))$ for $x\in A$.  Now, notice that for any $x \in A$ we have $$\left\| f_t (x) - f(x) \right\| \leq \sum_{i=1}^k \left| f_t^i (x) - f^i (x) \right|.$$ Finally, notice
\begin{align*}
 \D_A^{\drk} (f_t, f) &= \varint_A \text{min}\{ 1, \left\| f_t - f\right\|\}\, \dv\\
 &\leq \varint_A \text{min}\{ 1, \sum_{i=1}^k \left| f_t^i - f^i \right|\} \, \dv\\
 &\leq \sum_{i=1}^k \varint_A \text{min}\{ 1, \left| f_t^i - f^i \right|\} \, \dv\\
 &\leq \sum_{i=1}^k \varint_A \left| f_t^i - f^i \right| \, \dv.
\end{align*}
Since $\varint_A \left| f_t^j - f^j \right| \, \dv$ goes to 0 as $t$ goes to 0 for any choice of $j\in\{1, 2, \ldots, k\}$ the result follows.
\end{proof}

\begin{lemma}\label{lem_closedlimit}
 Let $a,b,c\in\R$ with $a<b$ and $c\in[a,b]$.  Let $\fab \in \Frk$ and $(\phi, B) \in \mp$ be such that $\phi_t \rightd \phi$ as $t\to c$ and suppose there exists a fixed closed subset $P \subset \R^k$ such that $\phi_t (B_t) \subset P$ for all $\tab$. Then $$\v{\{x \in B \mid \phi (x) \notin P \}} = 0$$ and thus there exists some $(\phi',B') \sim (\phi,B)$ such that $\phi'(B) \subset P$.
\end{lemma}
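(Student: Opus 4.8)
The plan is to show that the set where $\phi$ escapes $P$ has measure zero by intersecting with the sets $\mathcal{S}_n$ in the exhaustion and exploiting that $\D$-convergence forces $\DSn^{\drk}$-convergence for every compact $\mathcal{S}$ (Proposition \ref{prop_dconv}(2)). Since $P$ is closed, its complement $\R^k \setminus P$ is open, so for each point $x$ with $\phi(x) \notin P$ there is a positive distance $\mathcal{R}(x) := \drk(\phi(x), P) > 0$. The key observation is that if $x \in B_t \cap B$ and $\phi_t(B_t) \subset P$, then $\phi_t(x) \in P$ forces $\drk(\phi(x), \phi_t(x)) \geq \drk(\phi(x), P) = \mathcal{R}(x)$. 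This is exactly the situation Lemma \ref{lem_intr} is built to handle.

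First I would fix a compact set $\mathcal{S} \subset M$ (eventually taking $\mathcal{S} = \mathcal{S}_n$ and letting $n \to \infty$, or just noting any point of $B$ lies in some $\mathcal{S}_n$). Set $E = \{x \in B \mid \phi(x) \notin P\}$ and $E_\mathcal{S} = E \cap \mathcal{S}$. Define $\mathcal{R} : B \cap \mathcal{S} \to (0,\infty)$ by $\mathcal{R}(x) = \drk(\phi(x), P)$ for $x \in E_\mathcal{S}$ and, say, $\mathcal{R}(x) = 1$ for $x \in (B \cap \mathcal{S}) \setminus E_\mathcal{S}$ — any positive value works off $E_\mathcal{S}$ since Lemma \ref{lem_intr} only needs positivity. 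By Proposition \ref{prop_dconv}(2), $\phi_t \rightd \phi$ gives $\phi_t \rightdsrk \phi$ as $t \to c$, so Lemma \ref{lem_intr} applies and yields
\begin{equation*}
 \lim_{t \to c} \v{\{x \in B_t \cap B \cap \mathcal{S} \mid \drk(\phi(x), \phi_t(x)) > \mathcal{R}(x)\}} = 0.
\end{equation*}
But for every $x \in E_\mathcal{S} \cap \li B_t$, and in fact for all $t$ close enough to $c$ that $x \in B_t$, we have $x \in B_t \cap B \cap \mathcal{S}$ with $\drk(\phi(x),\phi_t(x)) \geq \mathcal{R}(x)$; more carefully, $E_\mathcal{S} \cap B_t \subset \{x \in B_t \cap B \cap \mathcal{S} \mid \drk(\phi(x),\phi_t(x)) \geq \mathcal{R}(x)\}$. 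Now I must handle the gap between $\geq$ and $>$: replace $\mathcal{R}$ by $\mathcal{R}/2$ (still positive), so that $\drk(\phi(x),\phi_t(x)) \geq \mathcal{R}(x) > \mathcal{R}(x)/2$, making the inclusion land in the strict-inequality set. Then $\v{E_\mathcal{S} \cap B_t} \to 0$ as $t \to c$.

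To finish, I need $\v{E_\mathcal{S}} = 0$ itself, not just $\v{E_\mathcal{S} \cap B_t} \to 0$. Since $\phi$ has domain $B$ and $\phi_t \rightd \phi$, the relevant hypothesis (built into $\D$-convergence, cf. the definition of $\rightae$ and Definition \ref{def_setconvg}) ensures $\v{B \setminus \li B_t} = 0$, i.e. $\v{\{x \in B : x \notin B_t \text{ for all } t \text{ near } c\}} = 0$; alternatively, $\v{(B \cap \mathcal{S}) \setminus B_t} \to 0$ because $\vs{B \vartriangle B_t} \to 0$ is forced by $\DS(\phi_t,\phi) \to 0$. Hence $\v{E_\mathcal{S}} \leq \v{E_\mathcal{S} \cap B_t} + \v{(B \cap \mathcal{S}) \setminus B_t} \to 0$, so $\v{E_\mathcal{S}} = 0$. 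Since $\mathcal{S}$ was an arbitrary compact (or $\mathcal{S}_n$) set and $M = \bigcup_n \mathcal{S}_n$, continuity of measure gives $\v{E} = \lim_n \v{E \cap \mathcal{S}_n} = 0$. For the last clause, define $B' = B \setminus E$ and $\phi' = \phi|_{B'}$; then $\phi'(B') \subset P$, $\v{B \vartriangle B'} = \v{E} = 0$, and $\phi' = \phi$ on $B' = B \cap B'$, so by Proposition \ref{prop_dconv}(3) we get $(\phi', B') \sim (\phi, B)$.

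\textbf{The main obstacle} I expect is the bookkeeping around set-convergence of the domains: one must be careful that $x \in E_\mathcal{S}$ really does lie in $B_t$ for $t$ near $c$ up to a null set, which is why invoking $\vs{B_t \vartriangle B} \to 0$ (a consequence of $\DS$-convergence via Proposition \ref{prop_ddef2}) rather than pointwise statements about $\li B_t$ is the cleaner route. The strict-versus-nonstrict inequality issue is a minor nuisance dispatched by halving $\mathcal{R}$, and everything else is a routine application of Lemma \ref{lem_intr} together with Proposition \ref{prop_dconv}.
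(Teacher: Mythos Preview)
Your proof is correct, but it takes a genuinely different route from the paper's. The paper argues directly: it stratifies the bad set $C=\{x\in B:\phi(x)\notin P\}$ into the level sets $C_n=\{x\in B:\drk(\phi(x),P)>2^{-n}\}$, and then bounds $\D^{\drk}_{C_n\cap\mathcal{S}}(\phi_t,\phi)$ from below by $2^{-n}\,\v{C_n\cap\mathcal{S}}$ in one stroke, using that the integrand is at least $2^{-n}$ on $B_t\cap C_n$ while the symmetric-difference term already accounts for $C_n\setminus B_t$. This single inequality handles both the ``in $B_t$'' and ``out of $B_t$'' parts simultaneously.

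You instead package the integral estimate into an application of Lemma~\ref{lem_intr} with $\mathcal{R}(x)=\tfrac12\,\drk(\phi(x),P)$, and then deal with $E_\mathcal{S}\setminus B_t$ separately via $\vs{B\vartriangle B_t}\to 0$. This is perfectly valid and arguably more modular---it reuses the machinery already built---at the cost of splitting the argument into two pieces that the paper's direct estimate handles at once. Note that Lemma~\ref{lem_intr} itself internally performs exactly the $2^{-n}$ level-set decomposition the paper uses, so the two proofs are close in spirit; yours just factors through the existing lemma. Your fix for the strict-versus-nonstrict inequality (halving $\mathcal{R}$) and your appeal to $\vs{B\vartriangle B_t}\to 0$ via Proposition~\ref{prop_ddef2} are both clean and correct.
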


\begin{proof}
 Without loss of generality assume that $a=c=0$ and $b=1$.  Since $P$ is closed notice that for $y\in \R^k$ we have that $$\inf_{p\in P} \{\drk (y, p)\}=0 \text{ implies } y \in P$$ where $\drk$ is the standard metric on $\R^k$. Thus, if we let $C = \{x\in B \mid \phi(x) \notin P \}$ and $$C_n = \{x \in B \mid \inf_{p\in P} \{\drk (\phi(x), p)\}>2^{-n}\}$$ for each $n\in\N$ then we have that $$C=\bigcup_{n=1}^\infty C_n.$$  So it will be sufficient to prove that $\v{C_n} = 0$ for each $n\in\N$.
 
 Let $\mathcal{S}\subset M$ be compact and notice that $\phi_t \rightd \phi$ as $t\to 0$ implies that $\lim_{t\rightarrow 0} \D_{C_n\cap \mathcal{S}}^{\drk} (\phi_t, \phi) = 0$.  We know
 \begin{align*}
 \D_{C_n\cap\mathcal{S}}^{\drk} (\phi_t, \phi) &= \varint_{\mc{B_t \cap C_n \cap \mathcal{S}}} \text{min}\{1, \drk (\phi_t, \phi) \}\, \dv + \v{(B_t \vartriangle B) \cap C_n\cap \mathcal{S}}\\
 &>2^{-n} \cdot \v{B_t \cap C_n \cap \mathcal{S}} + \v{(C_n \setminus B_t)\cap\mathcal{S}}\\
 &\geq 2^{-n} \cdot \v{C_n\cap\mathcal{S}}\geq 0.
 \end{align*}
This implies that $$\lim_{t\rightarrow 0} \left( 2^{-n} \cdot \v{C_n \cap\mathcal{S}}\right) = 0$$ for any choice of compact $\mathcal{S}\subset M$ which of course means $\v{C_n}=0$ for each $n\in\N$.
\end{proof}

\begin{lemma}\label{lem_dchecktop}
 Suppose that $\rho : N \rightarrow \R^k$ is an isometric embedding of Riemannian manifolds (ie, it preserves the metric tensor) and $a,b,c\in\R$ with $a<b$ and $c\in[a,b]$.  Then given some family $\fab \in \F$ and $\phi \in \mp$ we have that $\phi_t \rightd \phi$ as $t\to c$ if and only if $(\rho \circ \phi_t) \rightd (\rho \circ \phi)$ as $t\to c$.
\end{lemma}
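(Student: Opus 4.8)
The plan is to reduce the statement to Lemma~\ref{lem_equivmetric} by using $\rho$ to compare two topologically equivalent metrics on $N$. The point to keep in mind is that an isometric embedding of Riemannian manifolds preserves the metric \emph{tensor} but not the induced distance function: chords in $\R^k$ are shorter than intrinsic geodesics, so in general $\drk(\rho(y_1),\rho(y_2)) \neq d(y_1,y_2)$, and one cannot simply assert that $\D$ is unchanged under $\rho$. What is true, and all we need, is that $\rho$ is in particular a smooth embedding, hence a homeomorphism onto its image equipped with the subspace topology.

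First I would introduce the metric $d'$ on $N$ defined by $d'(y_1,y_2) := \|\rho(y_1) - \rho(y_2)\| = \drk(\rho(y_1),\rho(y_2))$; this is a genuine metric because $\rho$ is injective. Since $\rho$ is a topological embedding and the subspace metric $\drk\vert_{\rho(N)}$ induces the subspace topology on $\rho(N)$, pulling back along the homeomorphism $\rho$ shows that $d'$ induces the original manifold topology on $N$. Thus $d'$ and $d$ are topologically equivalent metrics on $N$, so by Lemma~\ref{lem_equivmetric} the metrics $\Dfull$ and $\DSn^{d'}$ are topologically equivalent on $\m$. Because $\D$-convergence of a family as $t\to c$ is a purely topological notion (it only refers to the neighborhood filter of $c$ in $(a,b)$ and of the limit point in $\m$), this yields: $\phi_t \rightd \phi$ as $t\to c$ with respect to $d$ if and only if the same holds with respect to $d'$.

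Next I would observe that $\D$ computed with $d'$ coincides with $\D$ computed with $\drk$ after composition with $\rho$. Since $\rho$ is defined on all of $N$, the domains are unchanged, $B_{\rho\circ\phi_t} = B_{\phi_t}$ and $B_{\rho\circ\phi} = B_\phi$, so the symmetric-difference terms agree; and on $B_{\phi_t}\cap B_\phi$ the penalty $\min\{1, d'(\phi_t(x),\phi(x))\}$ equals $\min\{1, \drk\big((\rho\circ\phi_t)(x),(\rho\circ\phi)(x)\big)\}$, which is exactly the penalty for the pair $\rho\circ\phi_t,\rho\circ\phi \in \mprk$. Hence $\DSn^{d'}(\phi_t,\phi) = \Dfullrk(\rho\circ\phi_t,\rho\circ\phi)$ for every $t$ and every exhaustion element, so $\phi_t \rightd \phi$ with respect to $d'$ if and only if $(\rho\circ\phi_t) \rightd (\rho\circ\phi)$ in $\mprk$. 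Chaining this equivalence with the one from the previous paragraph proves the Lemma.

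The only delicate point, more a matter of bookkeeping than a genuine obstacle, is resisting the temptation to claim that $\rho$ preserves $\D$ outright: it does not, and Lemma~\ref{lem_equivmetric} is precisely the tool that bridges the gap between ``preserves the metric tensor'' and ``preserves the distance function,'' at the level of topology, which is all the statement asks for.
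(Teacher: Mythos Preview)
Your proposal is correct and follows essentially the same route as the paper: define the pullback metric $d'(y_1,y_2)=\drk(\rho(y_1),\rho(y_2))$ on $N$, show it is topologically equivalent to $d$, and invoke Lemma~\ref{lem_equivmetric}. The paper verifies topological equivalence by an explicit ball-containment argument (using $d'\le d$ and the embedding property of $\rho$), whereas you phrase it more abstractly via the subspace topology, and you spell out the identification $\DSn^{d'}(\phi_t,\phi)=\Dfullrk(\rho\circ\phi_t,\rho\circ\phi)$ that the paper leaves implicit; these are cosmetic differences only.
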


\begin{proof}
 Let $d_N$ be the natural distance function on $N$ and let $\drk$ be the standard distance on $\R^k$. Then we may define a second distance function $d_2$ on $N$ by
 \begin{align*}
  d_2: N \times N &\rightarrow \R\\
  (y_1, y_2) &\mapsto \drk (\rho(y_1), \rho(y_2))
 \end{align*}
If we can show that these are topologically equivalent metrics on $N$ then the result will follow by Lemma \ref{lem_equivmetric}. Fix some $y_0 \in N$ and let $$b(r)=\{y \in N \mid d_N(y, y_0) < r\} \text{ and } b^2(r)=\{y \in N \mid d_2(y, y_0) < r\}.$$  Now notice that in general $d_2 \leq d_N$ (see Remark \ref{rmk_iso}), so we must only show that given some arbitrary $R>0$ we can find some $r>0$ such that $b^2(r) \subset b(R).$

Since $b(R) \subset N$ is an open set and $\rho$ is an embedding we can find some open set $U \subset \R^k$ such that $U \cap \rho(N) = \rho(b(R)).$ Now since $U$ is open and $\rho(y_0) \in U$ we can find some $r>0$ such that 
\begin{equation}\label{eqn_ball}
 \{z \in \R^k \mid \drk (z, \rho(y_0))<r \} \subset U.
\end{equation}

Now let $y \in b^2(r).$ Then we can see that Equation \eqref{eqn_ball} tells us that $\rho(y) \in U$. Clearly $\rho(y) \in \rho(N)$ so $\rho(y) \in U \cap \rho(N) = \rho(b(R)).$  Since $\rho$ is injective we now know that $y \in b(R).$  Thus $b^2(r) \subset b(R)$.

\end{proof}

\begin{remark}\label{rmk_iso}
 An isometric embedding of Riemannian manifolds preserves the metric at each point, so it will preserve the length of curves, but often the shortest path between two points in $\rho(N) \subset \R^k$ (a straight line) is not contained in $\rho(N)$.  This means that even though $\rho$ preserves the metric the images of two points in $\R^k$ may be closer than those two points are in $N$ and this is why $d_2 \leq d_N$ in the proof above.
\end{remark}

\subsection{Proof that \texorpdfstring{$(\m,\D)$}{the metric space} is complete}\label{subsec_complete}

The goal of this section is to prove that $(\m,\Dfull)$ is complete for any choice of an exhaustion of $M$ by finite volume sets $\Snfull$ and metric on $N$ induced by a complete Riemannian metric $\dist$.  To do this we first have to prove several lemmas.  We will start by considering mappings restricted to a compact set and indexed by $(0,1)$, but later it will be easy to generalize this to all of $M$ by using a compact exhaustion and to arbitrary intervals.  The first lemma proves the theorem in the special case that $N = \R^k$ and all maps have the same domain.

\begin{lemma}\label{lem_rkcomplete1}
 Fix some compact set $\mathcal{S} \subset M$ and let $\{(\phi_t, \mathcal{S})\}_\tI \in \mathcal{F}(\mprk)$ be a family which is Cauchy with respect to $\DSrk$ as $t\to 0$.  Then there exists some $\phi_0: \mathcal{S} \rightarrow \R^k$ unique up to $\sim$ such that $ \phi_t \rightdsrk \phi_0$ as $t\to 0$.
\end{lemma}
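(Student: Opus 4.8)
The plan is to mimic the classical proof that $L^1$ is complete, namely to pass to a rapidly convergent subsequence, produce a pointwise almost-everywhere limit, and then show it is the $\DSrk$-limit of the whole family. The one point that needs genuine care is the truncation $\min\{1,\cdot\}$ appearing in $\DSrk$: because the family is only assumed Cauchy for this \emph{truncated} quantity (and not for the honest $L^1$ distance $\varint_{\mathcal{S}}\|\phi_t-\phi_s\|\,\dv$), Lemma~\ref{lem_l1comp} cannot be invoked directly, and the existence of the limit must be extracted by hand. Everything takes place on the fixed finite-volume set $\mathcal{S}$, and all maps have domain $\mathcal{S}$, so the symmetric-difference terms in $\DSrk$ vanish and $\DSrk(\phi_s,\phi_t)=\varint_{\mathcal{S}}\min\{1,\|\phi_s-\phi_t\|\}\,\dv$ throughout.

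First I would use the Cauchy hypothesis to choose a sequence $1>t_1>t_2>\cdots$ with $t_n\to 0$ and $\DSrk(\phi_{t_n},\phi_{t_{n+1}})<2^{-n}$ for every $n$ (pick $\delta_n\in(0,1)$ with $\delta_n<\min(1/n,t_{n-1})$ so that $s,t\in(0,\delta_n)$ forces $\DSrk(\phi_s,\phi_t)<2^{-n}$, then take $t_n\in(0,\delta_n)$). Next, for $n$ large enough that $2^{-n/2}<1$, set $A_n=\{x\in\mathcal{S}:\|\phi_{t_n}(x)-\phi_{t_{n+1}}(x)\|>2^{-n/2}\}$; by Markov's inequality $\v{A_n}<2^{-n}/2^{-n/2}=2^{-n/2}$, which is summable, so by Borel--Cantelli $\mu_\V$-a.e.\ $x\in\mathcal{S}$ lies in only finitely many $A_n$. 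For such $x$ the tail $\sum_{m\ge n}\|\phi_{t_m}(x)-\phi_{t_{m+1}}(x)\|$ is eventually dominated by a geometric series, so $\{\phi_{t_n}(x)\}_n$ is Cauchy in $\R^k$; I define $\phi_0(x)$ to be its limit where this limit exists and, say, $0$ on the exceptional null set, so that $\phi_0:\mathcal{S}\to\R^k$ is measurable (an a.e.\ limit of measurable maps; take componentwise $\limsup$ if one wants measurability with no modification).

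To show $\phi_{t_n}\rightdsrk\phi_0$ I would use that $t\mapsto\min\{1,t\}$ is nondecreasing and subadditive on $[0,\infty)$, hence countably subadditive, so for a.e.\ $x$
\begin{align*}
 \min\{1,\|\phi_{t_n}(x)-\phi_0(x)\|\}
  &\le \min\Big\{1,\ \textstyle\sum_{m\ge n}\|\phi_{t_m}(x)-\phi_{t_{m+1}}(x)\|\Big\}\\
  &\le \sum_{m\ge n}\min\{1,\|\phi_{t_m}(x)-\phi_{t_{m+1}}(x)\|\}.
\end{align*}
Integrating over $\mathcal{S}$ and exchanging sum and integral by Tonelli gives $\DSrk(\phi_{t_n},\phi_0)\le\sum_{m\ge n}\DSrk(\phi_{t_m},\phi_{t_{m+1}})\le 2^{-n+1}\to 0$. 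Finally, to reach the full family: given $\varepsilon>0$ I take $\delta$ from the Cauchy hypothesis with $\DSrk(\phi_s,\phi_t)<\varepsilon/2$ for $s,t\in(0,\delta)$ and then $n$ with $t_n<\delta$ and $\DSrk(\phi_{t_n},\phi_0)<\varepsilon/2$; the triangle inequality for $\DSrk$ (which holds by the pointwise penalty-function bound used in the proof of Proposition~\ref{prop_ddist}) then yields $\DSrk(\phi_t,\phi_0)<\varepsilon$ for all $t\in(0,\delta)$, i.e.\ $\phi_t\rightdsrk\phi_0$ as $t\to 0$. For uniqueness, if $\phi_0'$ is another such limit then $\DSrk(\phi_0,\phi_0')\le\DSrk(\phi_0,\phi_t)+\DSrk(\phi_t,\phi_0')\to 0$, so $\DSrk(\phi_0,\phi_0')=0$, which (both domains being $\mathcal{S}$) forces $\phi_0=\phi_0'$ $\mu_\V$-a.e.\ on $\mathcal{S}$ by Proposition~\ref{prop_dconv}(3), i.e.\ $(\phi_0,\mathcal{S})\sim(\phi_0',\mathcal{S})$.

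The step I expect to be the main obstacle is producing the limit at all: since the hypothesis only controls the truncated integral, one really must go through the subsequence plus Borel--Cantelli argument to get pointwise a.e.\ convergence, and then transport it back to $\DSrk$-convergence via the subadditivity of $\min\{1,\cdot\}$; once that inequality is recorded the remainder is routine bookkeeping with the triangle inequality.
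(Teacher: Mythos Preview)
Your proof is correct and takes a genuinely different route from the paper's. The paper handles the truncation $\min\{1,\cdot\}$ by a different device: for each $n$ it picks $T_n$ with $\DSrk(\phi_t,\phi_{T_n})<2^{-n}$ for $t\le T_n$, then \emph{truncates the maps themselves}, replacing $\phi_t$ by a map $\overline{\phi_t^n}$ equal to $\phi_t$ where $\|\phi_t-\phi_{T_n}\|\le 1/2$ and radially projected onto that closed ball otherwise. Since $\|\overline{\phi_t^n}-\overline{\phi_s^n}\|\le 1$ pointwise, the truncated family is honestly $L^1$-Cauchy and Lemma~\ref{lem_l1comp} yields a limit $\overline{\phi_0^n}$; the paper then restricts to the set $B_0^n$ where $\|\phi_{T_n}-\overline{\phi_0^n}\|<1/4$, shows these sets exhaust almost all of $\mathcal{S}$ and that the resulting limits agree a.e.\ on overlaps, and patches the $\phi_0^n$ together into $\phi_0$. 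Your approach is more economical: rather than truncate the maps to recover an $L^1$-Cauchy family and then invoke $L^1$ completeness as a black box, you go straight to pointwise a.e.\ convergence via a Borel--Cantelli argument on a rapidly convergent subsequence, and then convert that to $\DSrk$-convergence using the countable subadditivity of $\min\{1,\cdot\}$ together with Tonelli. This is the classical Riesz--Fischer strategy adapted to the bounded metric, and it sidesteps the five-step truncation-and-patching machinery entirely; the paper's route buys a cleaner appeal to the completeness of $L^1$, but at the cost of a considerably longer argument.
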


\begin{proof}
 The proof has five steps.  Figures \ref{fig_cauchyex} and \ref{fig_cauchyex2} show how the proof works in a specific case.
 
 {\it Step 1:} First we will define a new family $\{(\overline{\phi_t^n}, \mathcal{S})\}_\tI\in\F$ for each $n \in \N$. Since $\{(\phi_t,\mathcal{S})\}_\tI$ is Cauchy with respect to $\DSrk$ for each $n\in \N$ pick some $T_n \in \I$ such that 
 \begin{equation}\label{eqn_chooseTn}
  t \leq T_n \implies \DSrk(\phi_t, \phi_{T_n}) < 2^{-n}.  
 \end{equation}
Now for each $n \in \N$ we can define a new family $\{(\overline{\phi_t^n},\mathcal{S})\}_{t\in(0,T_n)}$ by $$\overline{\phi_t^n}(x) = \left\{\begin{array}{ll}
 \phi_t(x) &\text{if } \left\| \phi_t (x) - \phi_{T_n}(x) \right\| \leq \nicefrac{1}{2};\\ \\
 \phi_{T_n} (x) + \frac{\phi_t(x) - \phi_{T_n} (x)}{2 \left\|\phi_t(x) - \phi_{T_n} (x)\right\|} &\text{otherwise.}
 \end{array}
\right.$$

 \begin{figure}
 \centering
 \includegraphics[height=140pt]{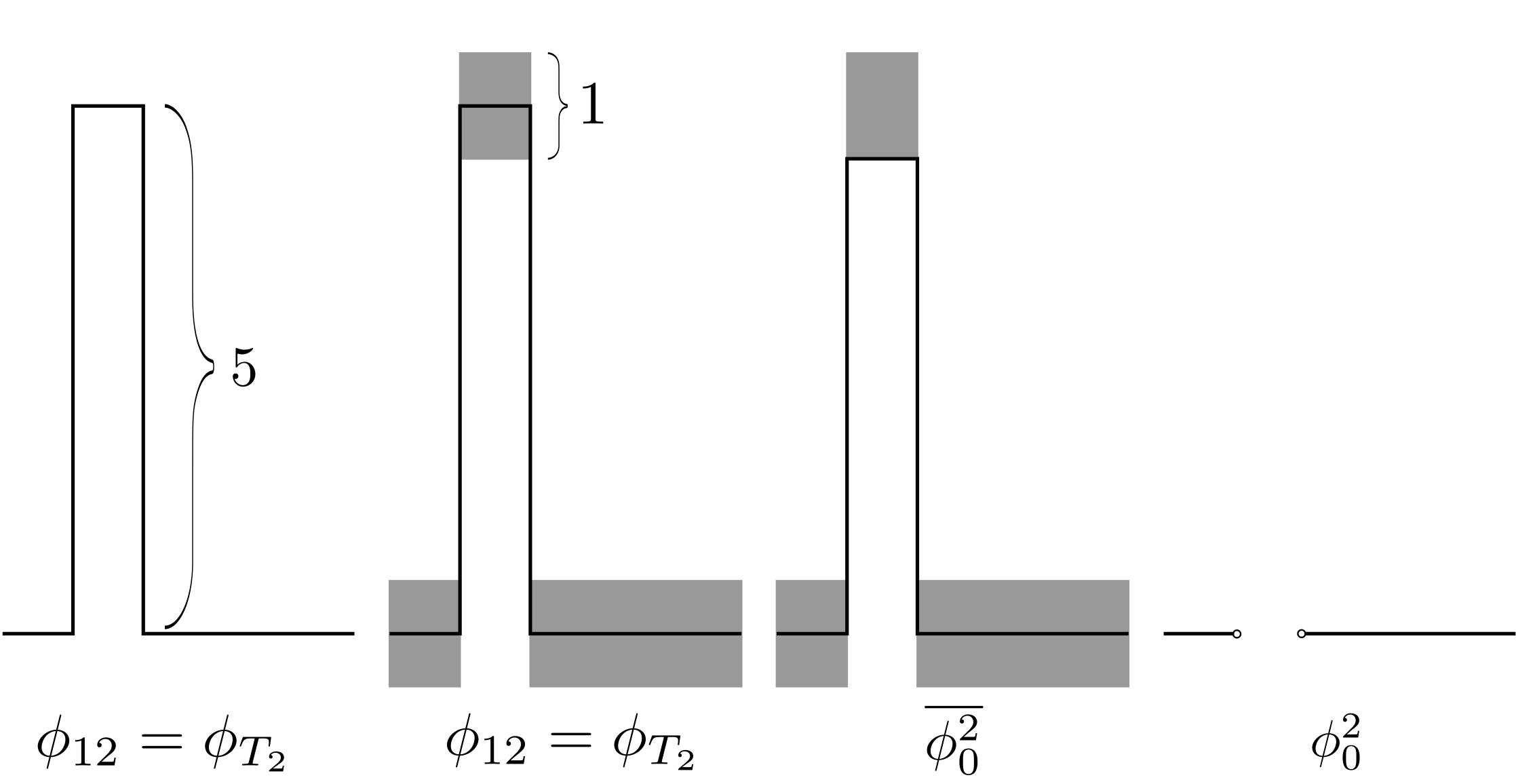}
 \caption{Applying the proof of Lemma \ref{lem_rkcomplete1} to Example \ref{ex_wave}. In Step 1 we choose $T_2 = 12$ because\protect\footnotemark in this case $\phi_{12}$ satisfies Equation \eqref{eqn_chooseTn} for $n=2$ and we restrict each mapping to have values within the shaded area (within a distance of $\nicefrac{1}{2}$ from $\phi_{T_2}$) to produce the family $\{(\overline{\phi_t^n}, \mathcal{S})\}.$ In Step 2 we find the limit of those functions to define $\overline{\phi_0^2}$. At the points in which this function takes values on the boundary of the shaded area we can see that the family is approaching a value outside of the shaded area, so in Step 3 we remove these points from the domain to form $\phi^2_0$.}
 \label{fig_cauchyex}
\end{figure}

 \begin{figure}
 \centering
 \includegraphics[height=150pt]{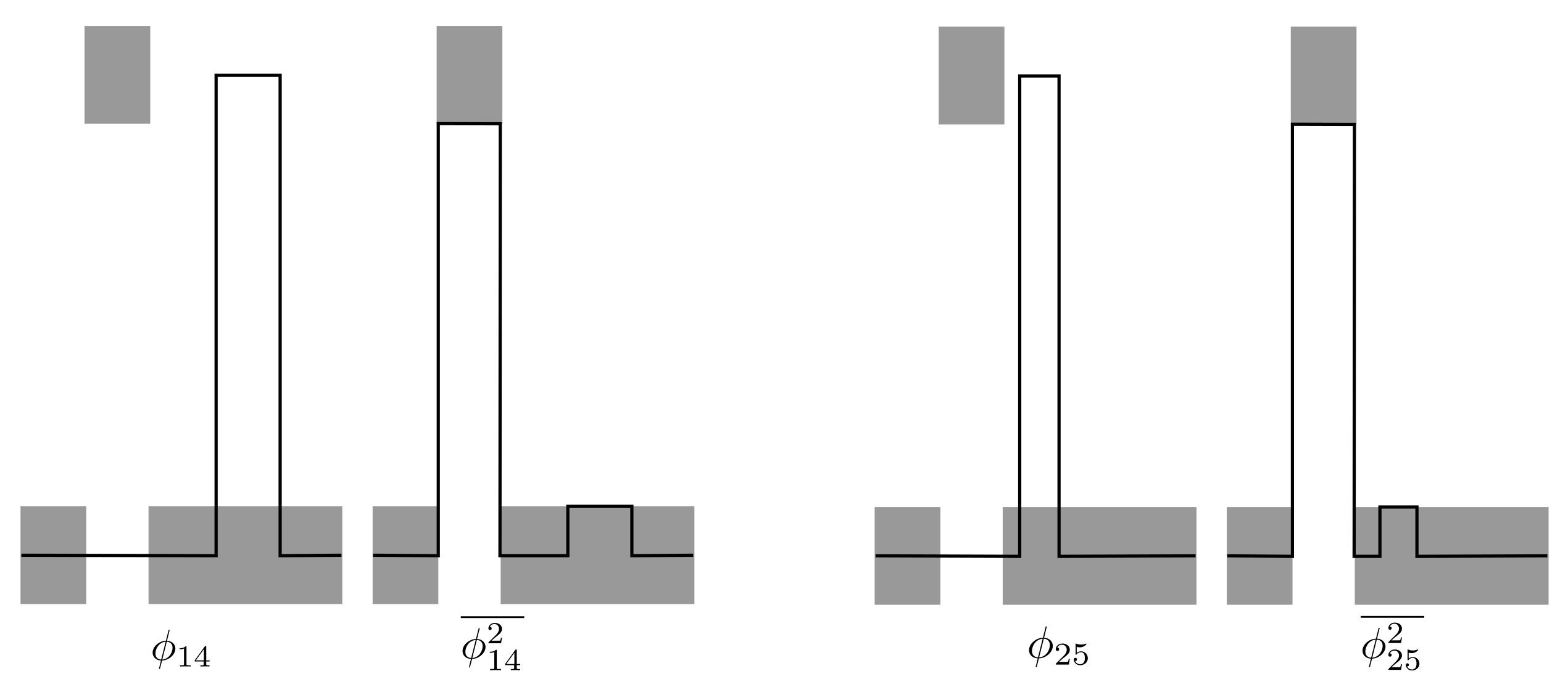}
 \caption{Two examples in which the maps are restricted to find the limit $\phi_0^2$ in Steps 1 and 2 of the proof of Lemma \ref{lem_rkcomplete1}.  In each case we start with $\phi_t$ and create $\overline{\phi_t}$ by changing the function to have only values with a distance less than $\nicefrac{1}{2}$ to $\phi_{T_2}$. }
 \label{fig_cauchyex2}
\end{figure}

 {\it Step 2:} Next we will show that each family $\{(\overline{\phi_t^n},\mathcal{S})\}_{t\in(0,T_n)}$ converges in $\DSrk$.  Notice for any $t,s < T_n$ we have that $\left\| \overline{\phi_t^n}(x) - \overline{\phi_s^n}(x) \right\| \leq 1$ so in fact we have that $$\DSrk (\overline{\phi_t^n}, \overline{\phi_s^n}) = \varint_\mathcal{S} \left\| \overline{\phi_t^n} - \overline{\phi_s^n} \right\|\, \dv.$$  Since for any $x \in \mathcal{S}$ we have that $\left\| \phi_t (x) - \phi_s(x) \right\| \geq \left\| \overline{\phi_t^n} (x) - \overline{\phi_x^n}(x)\right\|$ we know $$\DSrk (\phi_t, \phi_s) \geq \DSrk (\overline{\phi_t^n}, \overline{\phi_s^n})=\varint_\mathcal{S} \left\| \overline{\phi_t^n} - \overline{\phi_s^n} \right\|\, \dv$$ and since $\{(\phi_t, \mathcal{S})\}_\tI$ is Cauchy with respect to $\DSrk$ we now know that $\{(\overline{\phi_t^n},\mathcal{S})\}_{t\in(0,T_n)}$ is Cauchy with respect to $\varint_\mathcal{S} \left\| \overline{\phi_t^n} - \overline{\phi_s^n} \right\|\, \dv$. Thus by Lemma \ref{lem_l1comp} we know that for each $n \in \N$ there exists a map $\overline{\phi_0^n}: \mathcal{S} \rightarrow \R^k$ such that $\overline{\phi_t^n} \rightdsrk \overline{\phi_0^n}$ as $t\to 0$.
 
 \footnotetext{Recall that the functions in Example \ref{ex_wave} are labeled in the opposite order for convenience.}
 
 {\it Step 3:} In this step we will define $\phi_0^n$ for each $n$ on all but a subset of measure less than $2^{-n+2}$ of $\mathcal{S}$.  Let $$B_0^n = \{x\in \mathcal{S} \mid \left\| \phi_{T_n} (x) - \overline{\phi_0^n} (x) \right\|< \nicefrac{1}{4}\}.$$  Now define $$\phi_0^n = \overline{\phi_0^n} \vert_{B^n_0} : B^n_0 \rightarrow \R^k.$$ Now we will show that $\phi_0^n$ is defined on all but a small subset of $\mathcal{S}$.
 
 Let $\varepsilon > 0$ and pick some $t < T_n$ such that $\DSrk (\overline{\phi_t^n}, \overline{\phi_0^n}) < \varepsilon.$  Then
 \begin{align*}
  \DSrk (\phi_{T_n}, \overline{\phi_0^n}) &\leq \DSrk (\phi_{T_n}, \overline{\phi_t^n}) + \DSrk (\overline{\phi_t^n}, \overline{\phi_0^n})\\
  & < 2^{-n} + \varepsilon
 \end{align*}
for all $\varepsilon>0$ so we may conclude that $\DSrk (\phi_{T_n}, \overline{\phi_0^n}) \leq 2^{-n}$. Next also notice that since $\mathcal{S}\setminus B^n_0 \subset \mathcal{S}$ we know that
\begin{align*}
 \DSrk (\phi_{T_n}, \overline{\phi_0^n})& \geq \varint_{\mc{\mathcal{S} \setminus B^n_0}} \text{min}\{1,\left\| \phi_{T_n} - \overline{\phi_0^n}\right\|\}\, \dv\\
 &\geq \frac{1}{4} \v{\mathcal{S}\setminus B_0^n}.
\end{align*}
This means that $\v{\mathcal{S} \setminus B^n_0} \leq 2^{-n+2}.$ Since $B^n_0\subset \mathcal{S}$ we conclude that $\v{B_0^n} \geq \v{\mathcal{S}} - 2^{-n+2}.$ So if $$\v{\mathcal{S} \setminus \bigcup_{n=1}^\infty B_0^n}=\alpha>0$$ the we would have a contradiction because we can choose some $n\in\N$ such that $2^{-n+2}<\alpha$. Thus we have that $$\v{\mathcal{S} \setminus \bigcup_{n=1}^\infty B_0^n} = 0.$$

{\it Step 4:}  Next we must show that the limiting functions are equal on the overlap of their domains.  That is, we must show for any $m,n \in \N$ that $\phi_0^m(x) = \phi_0^n(x)$ for almost every $x \in B^m_0 \cap B^n_0$.
Our first step towards this goal is to define $$E_t^n = \{ x \in B^n_0 \mid \left\| \phi_{T_n} (x) - \overline{\phi_t^n}(x) \right\| \geq \nicefrac{1}{2} \}$$ and show that $$\lim_{t \rightarrow 0} \v{E^n_t} = 0$$ for all $n \in \N.$

Since $E_t^n \subset B^n_0$ we know that for any $x \in E_t^n$ we have that $$ \left\| \phi_{T_n} (x) - \overline{\phi_t^n}(x) \right\| \geq \nicefrac{1}{2}$$ and also that $$\left\| \phi_{T_n} (x) - \overline{\phi_0^n} (x) \right\| < \nicefrac{1}{4}.$$ Thus we may apply the triangle inequality to notice that $$\left\| \overline{\phi_t^n}(x) - \overline{\phi_0^n}(x) \right\| \geq \nicefrac{1}{4} \text{ for } x \in E_t^n.$$  Now we just notice that since $E_t^n \subset \mathcal{S}$ we have
\begin{align*}
 \DSrk(\overline{\phi_t^n}, \overline{\phi_0^n}) &\geq \varint_{E^n_t} \left\| \overline{\phi_t^n} - \overline{\phi_0^n} \right\|\, \dv\\
 &\geq \frac{1}{4} \v{E_t^n}.
\end{align*}
Thus we conclude that $\lim_{t \rightarrow 0} \v{E^n_t} = 0$, as desired.

Now let $C = \{x \in B^n \cap B^m \mid \phi_0^n (x) \neq \phi_0^m(x)\}$ and we will show that $\v{C} = 0$ to complete this step.  Notice that for any $x \in C \setminus (E_t^n \cup E_t^m)$ we have that $\overline{\phi_t^n}(x)=\overline{\phi_t^m}(x)=\phi_t(x)$.  Notice 
\begin{align*}
\varint_{C} \text{min}\{1,\left\| \overline{\phi_t^n}-\overline{\phi_t^m}\right\|\}\, \dv &\leq \v{E^n_t} + \v{E^m_t} + \varint_{\mc{C \setminus (E^n_t \cap E^m_t)}}\text{min}\{1,\left\| \overline{\phi_t^n}-\overline{\phi_t^m}\right\|\}\, \dv\\
&=\v{E^n_t} + \v{E^m_t}.
\end{align*}
Then, since $0<\varint_C \text{min}\{1,\left\| \overline{\phi_t^n}-\overline{\phi_t^m}\right\|\}\, \dv\leq \v{E^n_t} + \v{E^m_t}$ and the right side decreases to zero as $t\to 0$ we conclude that $$\lim_{t \rightarrow 0} \varint_C \text{min}\{1,\left\| \overline{\phi_t^n}-\overline{\phi_t^m}\right\|\}\, \dv=0$$ so $\lim_{t \rightarrow 0}\D_C^{\drk}(\overline{\phi_t^n},\overline{\phi_t^m})=0$.

Finally, by the triangle inequality $$\D_C^{\drk} (\phi_0^n, \phi_0^m) \leq \D_C^{\drk} (\phi_0^n, \overline{\phi_t^n})+\D_C^{\drk} (\overline{\phi_t^n},\overline{\phi_t^m})+\D_C^{\drk} (\overline{\phi_t^m},\phi_0^m)$$ and we know each term on the right goes to zero as $t\to 0$.  Since $t$ does not appear on the left side we may conclude that $$\D_C^{\drk} (\phi_0^n, \phi_0^m) =\varint_C \text{min}\{1,\left\| \phi_0^n  - \phi_0^m \right\|\} \, \dv = 0.$$ Notice that the function $\text{min}\{1,\left\| \phi_0^n (x) - \phi_0^m (x) \right\|\}$ is strictly positive on $C$, so since integrating it over $C$ yields zero we conclude that $\v{C} = 0$.

{\it Step 5:} In this step we will define the map $\phi_0 : \mathcal{S} \rightarrow \R^k$ and show that it is the unique limit.  Now define $$\phi_0 (x) = \phi_0^n (x) \text{ for any } n \text{ such that }x \in B_0^n.$$ This map is well defined almost everywhere because the $\phi_0^n$ are equal almost everywhere on the overlap of their domains and $\cup_{n=1}^\infty B^n_0$ covers almost all of $\mathcal{S}$.

Now we must show this is the limit.  Since we already know that $\{(\phi_t, \mathcal{S})\}_{\tI}$ is Cauchy it is sufficient to choose a subsequence and show it converges to $\phi_0$.  We will consider the sequence $\{(\phi_{T_n},\mathcal{S})\}_{n=1}^\infty$.  Fix some $\varepsilon >0$ and pick $N \in \N$ such that $2^{-n+2} < \nicefrac{\varepsilon}{3}$ for all $n>N$. Now for each $n>N$ pick $t_n\in(0,T_n)$ such that $\DSrk (\overline{\phi_{t_n}^n}, \overline{\phi_0^n}) < \nicefrac{\varepsilon}{3}$.  Then for any $n>N$ we have
\begin{align*}
 \DSrk (\phi_{T_n}, \phi_0) &\leq \DSrk (\phi_{T_n},\overline{\phi_{t_n}^n}) + \DSrk (\overline{\phi_{t_n}^n},\overline{\phi_0^n}) + \DSrk (\overline{\phi_0^n},\phi_0)\\
 &< 2^{-n} + \nicefrac{\varepsilon}{3} + 2^{-n+2}\\
 &< \varepsilon.
\end{align*}

Thus we conclude that $\phi_{T_n} \rightdsrk \phi_0$ as $t\to 0$ and thus $\phi_t \rightdsrk \phi_0$ as $t\to 0$.  To show that this is unique suppose that there exists some other $\phi_0':\mathcal{S} \rightarrow \R^k$ such that $\phi_t \rightdsrk \phi_0'$ as $t\to 0$.  Then for any compact set $\mathcal{S}'\subset M$ and $\tI$ we have that $$\D_{\mathcal{S}'}^{\drk}(\phi_0, \phi_0') \leq \DSrk(\phi_0, \phi_t) + \DSrk (\phi_t, \phi_0') \rightarrow 0\text{ as }t\to 0$$ since both have domain $\mathcal{S}$, so $\phi_0 \sim \phi_0'$.

\end{proof}
For the next step we will continue to focus on a single compact set and the case in which $N=\R^k$, but this time we will allow the domains of the functions to vary.
\begin{lemma}\label{lem_rkcomplete2}
 Fix some compact subset $\mathcal{S} \subset M$.  Any family $\fI\in\mathcal{F}(\mprk)$ which is Cauchy with respect to $\DSrk$ as $t\to 0$ also converges with respect to $\DSrk$ as $t\to 0$ to some $\phi_0: B_0 \rightarrow \R^k$ where $B_0 \subset \mathcal{S}.$ Moreover, among maps in $\mp$ with domains a subset of $\mathcal{S}$ that share this property, $\phi_0$ is unique up to $\sim$.
\end{lemma}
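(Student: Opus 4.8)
The plan is to reduce to the fixed-domain case already established in Lemma \ref{lem_rkcomplete1} by extending each $\phi_t$ off its domain to all of $\mathcal{S}$, and to extract the limiting domain from the symmetric-difference term of $\DSrk$.

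First I would identify the limiting domain. Writing $\DSrk$ in the form of Proposition \ref{prop_ddef2}, the quantity $\vs{B_t\vartriangle B_s}$ is one of the two nonnegative summands of $\DSrk(\phi_t,\phi_s)$, so the Cauchy hypothesis forces $\{\chi_{B_t\cap\mathcal{S}}\}_\tI$ to be Cauchy in $L^1(\mathcal{S})$ as $t\to0$. Completeness of $L^1(\mathcal{S})$ gives a limit $g\in L^1(\mathcal{S})$, and choosing a sequence $t_n\to0$ along which convergence is also pointwise a.e.\ shows $g\in\{0,1\}$ a.e.; hence $g=\chi_{B_0}$ for a measurable $B_0\subset\mathcal{S}$, and $\vs{B_t\vartriangle B_0}\to0$ as $t\to0$. (If $\v{B_0}=0$ the conclusion holds trivially with $B_0$ replaced by any singleton in $\mathcal{S}$, so assume $\v{B_0}>0$.)

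Next I would fix the basepoint $0\in\R^k$ and set $\widetilde{\phi_t}:\mathcal{S}\to\R^k$ equal to $\phi_t$ on $B_t\cap\mathcal{S}$ and to $0$ on $\mathcal{S}\setminus B_t$. Since $\widetilde{\phi_t},\widetilde{\phi_s}$ agree with $\phi_t,\phi_s$ on $B_t\cap B_s\cap\mathcal{S}$, both vanish on $\mathcal{S}\setminus(B_t\cup B_s)$, and on $(B_t\vartriangle B_s)\cap\mathcal{S}$ satisfy $\min\{1,\|\widetilde{\phi_t}-\widetilde{\phi_s}\|\}\le1$, a pointwise comparison of integrands gives
$$\DSrk(\widetilde{\phi_t},\widetilde{\phi_s})=\varint_\mathcal{S}\min\{1,\|\widetilde{\phi_t}-\widetilde{\phi_s}\|\}\,\dv\le\DSrk(\phi_t,\phi_s).$$
Thus $\{(\widetilde{\phi_t},\mathcal{S})\}_\tI\in\mathcal{F}(\mprk)$ is Cauchy with respect to $\DSrk$ as $t\to0$, and Lemma \ref{lem_rkcomplete1} produces $\psi_0:\mathcal{S}\to\R^k$ with $\widetilde{\phi_t}\rightdsrk\psi_0$ as $t\to0$.

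Finally I would set $\phi_0=\psi_0\vert_{B_0}:B_0\to\R^k$ and verify $\phi_t\rightdsrk\phi_0$. By Proposition \ref{prop_ddef2},
$$\DSrk(\phi_t,\phi_0)=\varint_{B_t\cap B_0\cap\mathcal{S}}\min\{1,\|\phi_t-\phi_0\|\}\,\dv+\vs{B_t\vartriangle B_0};$$
the second term tends to $0$ by the first step, and on $B_t\cap B_0\cap\mathcal{S}$ we have $\phi_t=\widetilde{\phi_t}$ and $\phi_0=\psi_0$, so the first term is at most $\varint_\mathcal{S}\min\{1,\|\widetilde{\phi_t}-\psi_0\|\}\,\dv=\DSrk(\widetilde{\phi_t},\psi_0)\to0$. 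For uniqueness, if $\phi_0'$ has domain $B_0'\subset\mathcal{S}$ and $\phi_t\rightdsrk\phi_0'$, then $\DSrk(\phi_0,\phi_0')\le\DSrk(\phi_0,\phi_t)+\DSrk(\phi_t,\phi_0')\to0$, hence $\DSrk(\phi_0,\phi_0')=0$; since $B_0,B_0'\subset\mathcal{S}$ this forces $\vplain\big((B_0\vartriangle B_0')\cap\mathcal{S}'\big)=0$ for every compact $\mathcal{S}'$ together with $\phi_0=\phi_0'$ a.e.\ on $B_0\cap B_0'$, so $\phi_0\sim\phi_0'$ by Proposition \ref{prop_dconv}(3). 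The only step requiring genuine thought is the extension construction and the inequality $\DSrk(\widetilde{\phi_t},\widetilde{\phi_s})\le\DSrk(\phi_t,\phi_s)$; the rest is bookkeeping with the two summands of $\DSrk$, the one real subtlety being to confirm at the outset that the $L^1$-limit of the domain indicators is again an indicator, i.e.\ that the domains converge as sets.
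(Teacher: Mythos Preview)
Your argument is correct, but it follows a genuinely different route from the paper's proof. The paper lifts each $\phi_t$ to a map $\widehat{\phi_t}:\mathcal{S}\to\R^{k+1}$ by sending $x\in B_t$ to $(0,\phi_t(x))$ and $x\notin B_t$ to the fixed marker point $(1,0,\ldots,0)$; this yields the \emph{equality} $\DSrkp(\widehat{\phi_t},\widehat{\phi_s})=\DSrk(\phi_t,\phi_s)$, so after applying Lemma~\ref{lem_rkcomplete1} the limit $\widehat{\phi_0}$ lands (via Lemma~\ref{lem_closedlimit}) in the closed set $\pi(\R^k)\cup\{(1,0,\ldots,0)\}$, and both $B_0$ and $\phi_0$ are read off simultaneously from $\widehat{\phi_0}$. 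You instead stay in $\R^k$, extend by $0$, and obtain only the inequality $\DSrk(\widetilde{\phi_t},\widetilde{\phi_s})\le\DSrk(\phi_t,\phi_s)$, which is still enough to feed into Lemma~\ref{lem_rkcomplete1}; the price is that the limit $\psi_0$ no longer encodes the domain, so you must recover $B_0$ separately from the $L^1$ convergence of the indicators $\chi_{B_t\cap\mathcal{S}}$. Your approach is arguably more elementary---it avoids the extra coordinate and the closed-image Lemma~\ref{lem_closedlimit}---while the paper's approach is slicker in that the domain and the map are produced in one stroke and no separate ``the $L^1$ limit of indicators is an indicator'' argument is needed.
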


\begin{proof}
 Let $\fI \in \Frk$ be a family which is Cauchy as $t\to 0$ and define $\pi: \R^{k} \rightarrow \R^{k+1}$ via $\pi(x_1, \ldots, x_k) = (0, x_1, \ldots, x_k)$. Now, for each $\tI$ define $\widehat{\phi_t}:\mathcal{S} \rightarrow \R^{k+1}$ by $$\widehat{\phi_t}(x) = \left\{\begin{array}{ll} (1, 0, \ldots, 0) &\text{if } x\notin B_t\\ \pi(\phi_t(x))& \text{if } x \in B_t. \end{array}\right.$$
 
 Notice that for all $s,t\in\I$ $$\text{min}\{1, \left\| \widehat{\phi_t}(x) - \widehat{\phi_s}(x)\right\|\} = \left\{\begin{array}{ll} \text{min}\{1, \left\| \phi_t (x) - \phi_s(x) \right\| \} & \text{ if } x \in B_t \cap B_s\\ 1 & \text{ if } x \in B_t \vartriangle B_s \\ 0 & \text{ if } x \notin B_t \cup B_s. \end{array}\right.$$
Thus for $s,t\in\I$ we have that
\begin{align*}
 \DSrkp (\widehat{\phi_t}, \widehat{\phi_s}) &= \varint_\mathcal{S} \text{min}\{1, \left\| \widehat{\phi_t} - \widehat{\phi_s}\right\|\}\, \dv\\
 &=\varint_{\mc{B_t \cap B_s \cap \mathcal{S}}} \text{min}\{1, \left\| \widehat{\phi_t} - \widehat{\phi_s}\right\|\}\, \dv + \varint_{\mc{(B_t \vartriangle B_s) \cap \mathcal{S}}} 1 \, \dv + \varint_{\mc{\mathcal{S} \setminus (B_t \cup B_s)}} 0 \, \dv\\
 &=\varint_{\mc{B_t \cap B_s \cap \mathcal{S}}} \text{min}\{1, \left\| \widehat{\phi_t} - \widehat{\phi_s}\right\|\}\, \dv + \vs{B_t \vartriangle B_s}\\
 &=\DSrk(\phi_t, \phi_s).
\end{align*}
 Now we can see that $\{\widehat{\phi_t}, \mathcal{S}\}_\tI$ must be Cauchy as well.  Since these are all functions into $\R^{k+1}$ with the same domain we can invoke Lemma \ref{lem_rkcomplete1} to conclude that there exists some limit $\widehat{\phi_0}: \mathcal{S} \rightarrow \R^{k+1}$ which is unique up to $\sim$ such that $\widehat{\phi_t} \rightdsrkp \widehat{\phi_0}$ as $t\to 0$. Since $K = \pi(\R^k) \cup \{(1, 0, \ldots, 0)\}$ is a closed subset of $\R^{k+1}$ we can invoke Lemma \ref{lem_closedlimit} to conclude that we may assume that $\widehat{\phi_0}(\mathcal{S}) \subset K$.
 
 This allows us to define $(\phi_0,B_0)$ in the following way.  Let $$B_0 = \{x \in \mathcal{S} \mid \widehat{\phi_0}(x) \neq (1, 0, \ldots, 0)\}.$$ So for any $x \in B_0$ we know that $\widehat{\phi_0}(x) \in \pi (\R^k)$, which means that we can define
 \begin{align*}
  \phi_0: B_0 &\rightarrow \R^k\\
  x &\mapsto \pi^{-1} (\widehat{\phi_t}(x)).
 \end{align*}
 Since $\DSrk(\phi_t, \phi_0) = \DSrkp(\widehat{\phi_t}, \widehat{\phi_0})$ we can see that $\widehat{\phi_t} \rightdsrkp \widehat{\phi_0}$ implies that $\phi_t \rightdsrk \phi_0$ and we know that $\phi_0$ is unique up to $\sim$ because $\widehat{\phi_0}$ is.

\end{proof}
Finally we will expand to consider all of $M$ instead of just a single compact set, but we will still only consider $N=\R^k$.
\begin{lemma}\label{lem_rkcomplete3}
 $(\mrk, \D)$ is complete.  That is, for any choice of a nested finite exhaustion of $M$ denoted $\Snfull$ we have that $\big(\mrk, \D_{\Sn}^{\drk}\big)$ is a complete metric space where $\drk$ is the standard metric on $\R^k$.
\end{lemma}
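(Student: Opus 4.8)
The plan is to build the limit by patching the limits on the pieces of a compact exhaustion, applying Lemma~\ref{lem_rkcomplete2} on each piece. Since the metrics $\Dfullrk$ coming from different exhaustions are weakly equivalent (Proposition~\ref{prop_ddist}), and weak equivalence preserves completeness --- weakly equivalent metrics have the same Cauchy sequences and the same topology, hence the same convergent sequences --- it is enough to treat one conveniently chosen exhaustion. So I would fix a \emph{compact} exhaustion $\Snfull$ of $M$ and take a family $\fI\in\mathcal{F}(\mprk)$ that is Cauchy with respect to $\Dfullrk$ as $t\to 0$ (throughout, a Cauchy sequence may be used in place of a family indexed by $\I$). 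From the series expression $\Dfullrk=\sum_{n}2^{-n}\D^{\drk}_{\mathcal{S}_n}/\v{\mathcal{S}_n}$ of Proposition~\ref{prop_ddef2}, keeping a single term gives $\D^{\drk}_{\mathcal{S}_n}\le 2^{n}\v{\mathcal{S}_n}\,\Dfullrk$, so the family is Cauchy with respect to $\D^{\drk}_{\mathcal{S}_n}$ as $t\to 0$ for each $n$ (alternatively one may cite Proposition~\ref{prop_dconv}(1)). Then Lemma~\ref{lem_rkcomplete2} provides, for each $n$, a pair $(\phi_0^{(n)},B_0^{(n)})\in\mprk$ with $B_0^{(n)}\subset\mathcal{S}_n$, unique up to $\sim$ among maps with domain inside $\mathcal{S}_n$, such that $\D^{\drk}_{\mathcal{S}_n}(\phi_t,\phi_0^{(n)})\to 0$ as $t\to 0$.

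The crux is the compatibility of these local limits. Fix $m\le n$. Since the penalty integrand is nonnegative and $\mathcal{S}_m\subset\mathcal{S}_n$ we have $\D^{\drk}_{\mathcal{S}_m}\le\D^{\drk}_{\mathcal{S}_n}$, so for every $\tI$ the triangle inequality gives
$$\D^{\drk}_{\mathcal{S}_m}\big(\phi_0^{(m)},\phi_0^{(n)}\big)\le\D^{\drk}_{\mathcal{S}_m}\big(\phi_0^{(m)},\phi_t\big)+\D^{\drk}_{\mathcal{S}_n}\big(\phi_t,\phi_0^{(n)}\big),$$
whose right-hand side tends to $0$ as $t\to 0$ while the left-hand side does not depend on $t$; hence $\D^{\drk}_{\mathcal{S}_m}(\phi_0^{(m)},\phi_0^{(n)})=0$. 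Unwinding this exactly as in Proposition~\ref{prop_dconv}(3), and using $B_0^{(m)}\subset\mathcal{S}_m$, shows that $\phi_0^{(m)}=\phi_0^{(n)}$ almost everywhere on $B_0^{(m)}\cap B_0^{(n)}$ and that $B_0^{(m)}=B_0^{(n)}\cap\mathcal{S}_m$ up to a $\vplain$-null set. Therefore $B_0:=\bigcup_{n\ge 1}B_0^{(n)}$ is, up to null sets, an increasing union, the assignment $\phi_0(x):=\phi_0^{(n)}(x)$ for any $n$ with $x\in B_0^{(n)}$ is well defined almost everywhere and measurable, so $(\phi_0,B_0)\in\mprk$; and by construction $B_0\cap\mathcal{S}_n=B_0^{(n)}$ and $\phi_0=\phi_0^{(n)}$ a.e.\ on $\mathcal{S}_n$, so $\D^{\drk}_{\mathcal{S}_n}(\phi_0,\phi_0^{(n)})=0$ for every $n$. (The degenerate case $\v{B_0}=0$ is handled by the same estimate below.)

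Finally I would check convergence: given $\varepsilon>0$ pick $N$ with $2^{-N}<\varepsilon/2$; using $\D^{\drk}_{\mathcal{S}_n}\le\v{\mathcal{S}_n}$ and Proposition~\ref{prop_ddef2},
$$\Dfullrk(\phi_t,\phi_0)\le\sum_{n=1}^{N}2^{-n}\,\frac{\D^{\drk}_{\mathcal{S}_n}(\phi_t,\phi_0)}{\v{\mathcal{S}_n}}+2^{-N},$$
and since $\D^{\drk}_{\mathcal{S}_n}(\phi_0,\phi_0^{(n)})=0$ each of the finitely many $\D^{\drk}_{\mathcal{S}_n}(\phi_t,\phi_0)=\D^{\drk}_{\mathcal{S}_n}(\phi_t,\phi_0^{(n)})$ tends to $0$ as $t\to 0$, so $\Dfullrk(\phi_t,\phi_0)<\varepsilon$ for $t$ small; thus $\lim_{t\to 0}\Dfullrk(\phi_t,\phi_0)=0$, and the limit is unique up to $\sim$ because limits in a metric space are unique. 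I expect the only genuine difficulty to be the patching step of the second paragraph: one must track the null sets carefully to be sure the domains $B_0^{(n)}$ nest consistently, so that the glued pair $(\phi_0,B_0)$ really is an element of $\mprk$ whose restriction to each $\mathcal{S}_n$ reproduces $\phi_0^{(n)}$; everything else is routine assembly of Lemma~\ref{lem_rkcomplete2} and Proposition~\ref{prop_ddef2}.
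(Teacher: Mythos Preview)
Your proof is correct and follows essentially the same route as the paper: apply Lemma~\ref{lem_rkcomplete2} on each piece of a compact exhaustion, check compatibility of the local limits, glue, and verify convergence. The only cosmetic differences are that the paper establishes compatibility by restricting $\phi_0^{\mathcal{S}_m}$ to $\mathcal{S}_n$ and invoking the uniqueness clause of Lemma~\ref{lem_rkcomplete2}, whereas you use a direct triangle-inequality argument to get $\D^{\drk}_{\mathcal{S}_m}(\phi_0^{(m)},\phi_0^{(n)})=0$; and the paper cites Proposition~\ref{prop_dconv} for the final convergence while you do the tail estimate by hand --- both equally valid.
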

\begin{proof}
 It is sufficient to show that families indexed by $\I$ which are Cauchy as $t\to 0$ also converge as $t\to 0$.  Let $\fI \in \Frk$ be Cauchy with respect to $\Dfullrk$. From Proposition \ref{prop_dconv} we know that this means for all compact $\mathcal{S} \subset M$ this sequence is Cauchy with respect to $\DSrk$ and from Lemma \ref{lem_rkcomplete2} we know that this means for each compact $\mathcal{S}\subset M$ we have some $\phi_0^\mathcal{S}: B_0^\mathcal{S} \to \R^k$ where $B_0^\mathcal{S}\subset\mathcal{S}$ such that $(\phi_0^\mathcal{S}, B_0^\mathcal{S})$ is unique up to $\sim$. Let $\Snfull$ be a nested compact exhaustion of $M$ and now we would like to conclude that for $n<m$ we have that $$(\phi_0^{\mathcal{S}_m} \vert_{\mathcal{S}_n}, B_0^{\mathcal{S}_m}\cap \mathcal{S}_n) \sim (\phi_0^{\mathcal{S}_n}, B_0^{\mathcal{S}_n}).$$ Notice that
\begin{align*}
 \D_{\mathcal{S}_n}^{\drk} (\phi_0^{\mathcal{S}_m} \vert_{\mathcal{S}_n}, \phi_t) &= \D_{\mathcal{S}_n}^{\drk} (\phi_0^{\mathcal{S}_m}, \phi_t)\\
 &\leq \D_{\mathcal{S}_m}^{\drk} (\phi_0^{\mathcal{S}_m}, \phi_t).
\end{align*}
because $\mathcal{S}_n \subset \mathcal{S}_m$.  Since $\D_{\mathcal{S}_m}^{\drk} (\phi_0^{\mathcal{S}_m}, \phi_t)\rightarrow 0$ as $t\to 0$ we know that $\D_{\mathcal{S}_n}^{\drk} (\phi_0^{\mathcal{S}_m} \vert_{\mathcal{S}_n}, \phi_t) \rightarrow 0$ as $t\to 0$.  From Lemma \ref{lem_rkcomplete2} we know that such a limit with domain a subset of $\mathcal{S}_n$ is unique up to $\sim$.  Thus we conclude that $\phi_0^{\mathcal{S}_m} \vert_{\mathcal{S}_n} \sim \phi_0^{\mathcal{S}_n}.$ This means that the symmetric difference of their domains has zero volume, so $\v{(B_0^{\mathcal{S}_n}\vartriangle B_0^{\mathcal{S}_m})\cap\mathcal{S}_n}=0$, and also they are equal almost everywhere on the overlap of their domains.  So now we can define $B_0 = \bigcup_{n=1}^\infty B_0^{\mathcal{S}_n}$ and $\phi_0: B_0 \rightarrow \R^k$ almost everywhere by $$\phi_0 (x) = \phi_0^{\mathcal{S}_n} (x)\text{ where } x \in \mathcal{S}_n$$ and this is well defined.  Since $\phi_t \xrightarrow{\D_{\mathcal{S}_n}^{\drk}} \phi_0$ as $t\to 0$ for all $\mathcal{S}_n$ in a compact exhaustion of $M$ we know by definition that $\phi_t \rightd \phi_0$ as $t\to 0$.
\end{proof}

Now we are ready to prove that $(\m,\D)$ is complete.

\begin{lemma}\label{lem_complete}
 Suppose that $\Snfull$ is a nested exhaustion of $M$ by finite measure sets and that $\dist$ is a metric on $N$ induced by a Riemannian metric. Then  $\big(\m, \Dfull\big)$ is complete if and only if $(N, \dist)$ is complete.
\end{lemma}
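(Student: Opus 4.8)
The plan is to prove the two implications separately, in both cases transferring the problem to a Euclidean target $\R^k$ where the earlier lemmas do the work.

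\textbf{The implication ``$(N,\dist)$ complete $\implies(\m,\Dfull)$ complete''.} First I would invoke the Nash isometric embedding theorem to fix an isometric embedding $\rho\colon N\to\R^k$ of Riemannian manifolds; since $N$ is complete one may moreover take $\rho$ to have closed image $\rho(N)\subset\R^k$. Now let $\fab$ be a family which is Cauchy with respect to $\Dfull$. Because $\drk(\rho(y_1),\rho(y_2))\le\dist(y_1,y_2)$ for all $y_1,y_2\in N$ (Remark \ref{rmk_iso}), comparing penalty functions pointwise gives $\mathcal{D}^{\drk}_{\mathcal{S}}(\rho\circ\phi_t,\rho\circ\phi_s)\le\mathcal{D}^{\dist}_{\mathcal{S}}(\phi_t,\phi_s)$ for every finite volume $\mathcal{S}$ (the symmetric-difference terms contribute the same amount on both sides), and hence $\Dfullrk(\rho\circ\phi_t,\rho\circ\phi_s)\le\Dfull(\phi_t,\phi_s)$ after the weighted sum. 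So $\{(\rho\circ\phi_t,B_t)\}$ is Cauchy in $\mrk$, and by Lemma \ref{lem_rkcomplete3} there is $(\psi_0,B_0)\in\mprk$, unique up to $\sim$, with $\rho\circ\phi_t\rightd\psi_0$. Since $\rho(N)$ is closed and $(\rho\circ\phi_t)(B_t)\subset\rho(N)$ for all $t$, Lemma \ref{lem_closedlimit} lets me assume $\psi_0(B_0)\subset\rho(N)$; then $\phi_0:=\rho^{-1}\circ\psi_0\colon B_0\to N$ is a well-defined measurable map (as $\rho^{-1}$ is continuous on $\rho(N)$) with $\rho\circ\phi_0=\psi_0$, so $\rho\circ\phi_t\rightd\rho\circ\phi_0$, and Lemma \ref{lem_dchecktop} upgrades this to $\phi_t\rightd\phi_0$. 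Thus every Cauchy family converges and $(\m,\Dfull)$ is complete.

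\textbf{The implication ``$(\m,\Dfull)$ complete $\implies(N,\dist)$ complete''.} Here I would argue by contradiction: suppose $(N,\dist)$ is not complete and fix a $\dist$-Cauchy sequence $\{y_n\}\subset N$ with no limit. Choose a precompact open set $U\subset M$, so that $0<\v{U}<\infty$ and $\overline{U}$ is compact, and let $\phi_n\colon U\to N$ be the constant map with value $y_n$. Since all these maps share the domain $U$, for each compact $\mathcal{S}$ one computes $\mathcal{D}^{\dist}_{\mathcal{S}}(\phi_n,\phi_m)=\text{min}\{1,\dist(y_n,y_m)\}\,\v{U\cap\mathcal{S}}$, hence $\Dfull(\phi_n,\phi_m)=\text{min}\{1,\dist(y_n,y_m)\}\,\nuS(U)$, so $\{\phi_n\}$ is $\Dfull$-Cauchy in $\m$. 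By completeness there is $(\phi_0,B_0)\in\mp$ with $\Dfull(\phi_n,\phi_0)\to0$, and by Proposition \ref{prop_dconv} also $\mathcal{D}^{\dist}_{\overline{U}}(\phi_n,\phi_0)\to0$. The contribution $\v{(U\vartriangle B_0)\cap\overline{U}}$ to $\mathcal{D}^{\dist}_{\overline{U}}(\phi_n,\phi_0)$ is independent of $n$, hence vanishes, so $U\subset B_0$ up to a null set and $\int_U\text{min}\{1,\dist(y_n,\phi_0(x))\}\,\dv\to0$; i.e.\ $\text{min}\{1,\dist(y_n,\phi_0(\cdot))\}\to0$ in $L^1(U)$. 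Passing to a subsequence converging almost everywhere yields a point $x_0\in U$ with $\dist(y_{n_j},\phi_0(x_0))\to0$, and since a Cauchy sequence with a convergent subsequence converges, $y_n\to\phi_0(x_0)$, contradicting the choice of $\{y_n\}$. Hence $(N,\dist)$ is complete.

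\textbf{Expected main obstacle.} The delicate point is the step in the first implication that keeps the Euclidean limit $\psi_0$ inside $\rho(N)$: this is exactly where completeness of $N$ enters, and it relies on being able to take the Nash embedding with closed image. If one wishes to use only the bare Nash theorem, the same effect can be obtained by extracting a subsequence $\phi_{t_n}$ for which $\mathcal{D}^{\dist}_{\mathcal{S}_m}(\phi_{t_{n+1}},\phi_{t_n})$ is summable for every $m$ and applying Borel--Cantelli: for almost every $x$ the sequence $\{\phi_{t_n}(x)\}$ is eventually defined and $\dist$-Cauchy, hence convergent in $N$ by completeness, which builds $\phi_0$ pointwise a.e.\ and forces $\rho\circ\phi_0=\psi_0$ a.e.; one then finishes exactly as above (this route also makes Theorem \ref{thm_convg} available, since the subsequence converges to $\phi_0$ almost everywhere pointwise). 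The remaining ingredients---measurability of $\phi_0$, that the domain sets have the required form, and that a Cauchy family with a convergent subfamily converges to the same limit---are routine.
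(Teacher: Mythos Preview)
Your proposal is correct and follows essentially the same route as the paper: for the forward implication you use Nash with closed image, the pointwise inequality $\drk(\rho(\cdot),\rho(\cdot))\le\dist(\cdot,\cdot)$, then Lemmas \ref{lem_rkcomplete3}, \ref{lem_closedlimit}, and \ref{lem_dchecktop} in exactly the same sequence as the paper. For the reverse implication the paper also uses constant maps at a non-convergent Cauchy sequence (with domain all of $M$ rather than a precompact $U$) but leaves the contradiction as ``easy to see''; your version simply fills in those details.
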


\begin{figure}
 \centering
 \includegraphics[height=160pt]{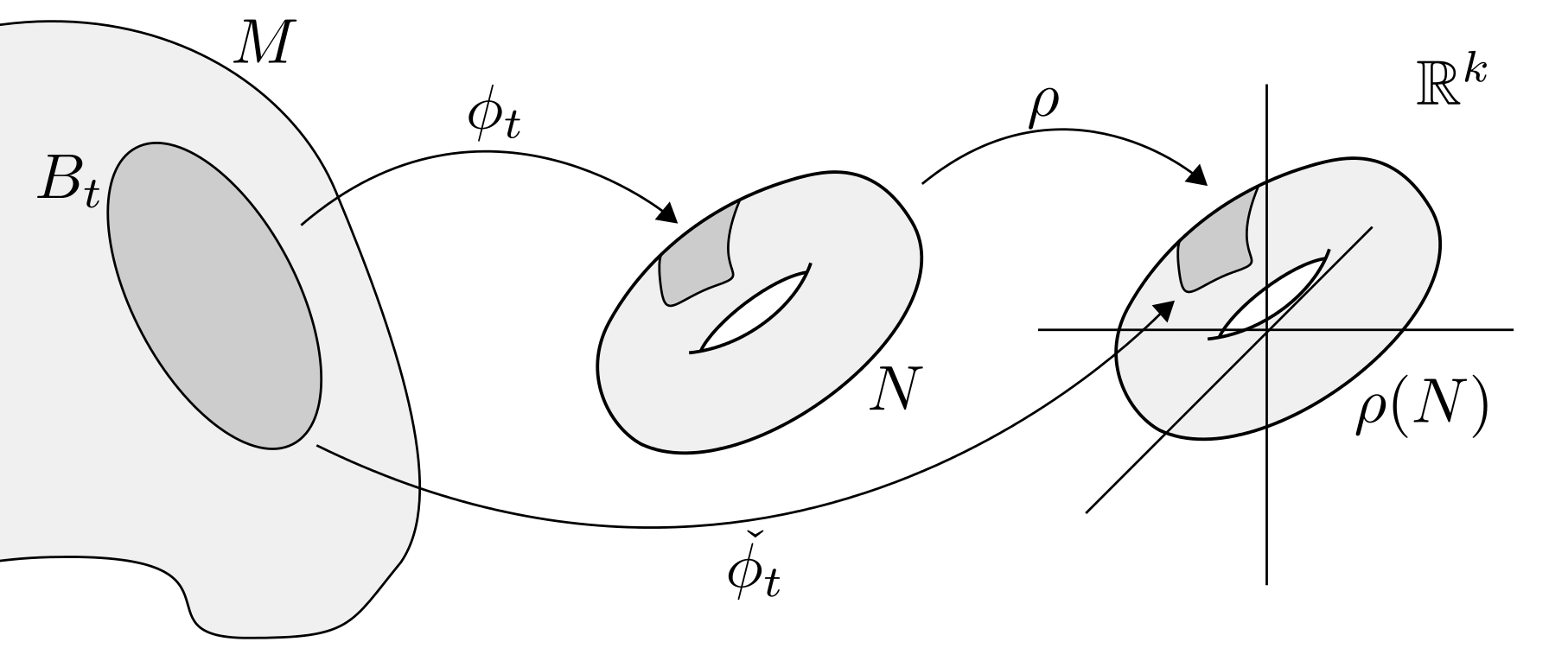}
 \caption{\{$\check{\phi_t}\}_{\tI}$ is a family of maps into $\R^k$.}
 \label{fig_isomemb}
\end{figure}

\begin{proof}

It is sufficient to show that Cauchy families indexed by $\I$ converge as $t\to 0$. First assume that $(N,d)$ is complete and let $\fI \in \F$ be Cauchy as $t\to 0$.  Now, by the Nash embedding theorem ~\cite[Theorem 3]{Nash} we know there exists an isometric embedding $\rho: N \rightarrow \R^k$ for some $k \in \N$.  In fact, since $N$ is a complete Riemannian manifold we can choose $\rho$ to have a closed image ~\cite[Theorem 0.2]{Muller}.

Let $d_N$ denote the distance on $N$ induced by the metric and let $d_{\R^k}$ denote the standard distance on $\R^k$.  Notice for $y_1, y_2 \in N$ we have that \begin{equation}\label{eqn_dineq}
d_{\R^k}(\rho(y_1), \rho(y_2)) \leq d_N (y_1, y_2).
\end{equation}
(See Remark \ref{rmk_iso}). Define $$\check{\phi_t}:=\rho \circ \phi_t:B_t \rightarrow \R^k$$ as is shown in Figure \ref{fig_isomemb}. From Equation \eqref{eqn_dineq} above we know that $$\DSrk (\check{\phi_t}, \check{\phi_s}) \leq \DS (\phi_t, \phi_s)$$ for all compact $\mathcal{S} \subset M$ so we can conclude that $\{ (\check{\phi_t}, B_t \}_\tI$ is also Cauchy with respect to $\D$.  By Lemma \ref{lem_rkcomplete3} we know that there exists some $\check{\phi_0}: B_0 \rightarrow \R^k$ such that $\check{\phi_t} \rightd \check{\phi_0}$ as $t\to 0$ and by Lemma \ref{lem_closedlimit} we can conclude, up to measure zero corrections, that $$\check{\phi_0}(B_0) \subset \rho(N).$$  Thus we may define $$\phi_0:=\rho^{-1} \circ \check{\phi_0}: B_0 \rightarrow N.$$  By Lemma \ref{lem_dchecktop} we know that $\check{\phi_t} \rightd \check{\phi_0}$ as $t\to 0$ implies that $\phi_t \rightd \phi_0$ as $t\to 0$ and so we can conclude that the Cauchy sequence converges.

It is easy to see that if $N$ is not complete then $\m$ is not complete.  Consider a sequence of constant functions $\{\phi_t:M\rightarrow N\}_\tI$ such that $$\phi_t(x) = y_t$$ where $y_t$ is a Cauchy family in $N$ which does not converge.
 
\end{proof}

The proof of Theorem \ref{thm_complete} follows from Proposition \ref{prop_ddist}, Lemma \ref{lem_equivriemmetric}, Lemma \ref{lem_complete}, and the fact that every manifold admits a complete Riemannian metric ~\cite[Theorem 1]{Nomizu}.

\section{Almost everywhere convergence and \texorpdfstring{$\D$}{the distance}}\label{sec_convg}
We already have a definition of convergence in distance, so in this section we will define and explore the properties of a way in which these maps can converge pointwise almost everywhere.  To talk about convergence of a family in $\F$ we must have both the domains and the mappings converge.  First, we will describe the convergence of the domains.

Let $a,b,c\in\R$ with $a<b$ and $c\in[a,b]$.  Now let $\{B_t \subset M\}_\tab$ be a collection of measurable subsets of $M$.  Recall the limit inferior and limit superior of a family of sets, given by $$\lic (B_t) := \bigcup_{\delta\in (0,1)} \left( \bigcap_{\substack{t \in (a,b),\\ \left|t-c\right|<\delta}} B_t \right)$$ and $$\lsc (B_t) := \bigcap_{\delta\in \I} \left( \bigcup_{\substack{t \in (a,b),\\ \left|t-c\right|<\delta}} B_t \right)$$ respectively. So the limit inferior of the family is the collection of all points which are eventually in every $B_t$ as $t\to c$ and the limit superior is the collection of all points which are not eventually outside of every $B_t$.  Clearly it can be seen that $\li(B_t) \subset \ls(B_t)$. We say that the family converges if these two sets only differ by a set of measure zero.  That is,
\begin{definition}\label{def_setconvg}
 Let $a,b,c\in\R$ with $a<b$ and $c\in[a,b]$ and let $\fab \in \F$. If $$\vplain\left\{\lsc(B_t) \setminus \lic(B_t)\right\}=0$$ we say that the collection of sets $\{B_t\}_{\tab}$ {\it converges to $\lic(B_t)$ as $t\to c$} or $\fab$ has {\it converging domains as $t\to c$}. Furthermore, if $\{[\phi_t, B_t]\}_{\tab} \in \Ft$ has converging domains for one choice of representative we say it has {\it converging domains}.
\end{definition}

\begin{remark}
 Notice that any nested family of subsets will converge by this definition.  For $a,b\in\R$ with $a<b$ let $\{B_t\}_\tab$ be a family of subsets such that for $s,t\in(a,b)$ we have that $s<t$ implies $B_t\subset B_s$. Then $$\lia B_t = \lsa B_t = \bigcup_\tab B_t.$$
\end{remark}
\begin{remark}
 Notice that if $\ftab \in \Ft$ has converging domains as $t\to c$ (for $a,b,c\in\R, a<b, c\in[a,b]$) then we can always choose some collection of representatives $\{(\phi_t', B_t') \in [\phi_t,B_t]\}_\tab$ such that $\li B_t' = \ls B_t'$ where both limits are taken as $t\to c$.
\end{remark}

Now that we understand the convergence of domains we are prepared to describe almost everywhere convergence in $\mp$. Let $a,b,c\in\R$ with $a<b$ and $c\in[a,b]$.  Notice that if $x \in \lic(B_t)$ then there exists some $\delta > 0$ such that if $t\in (a,b)$ and $\left|t-c\right|<\delta$ then $x \in B_t$.  This means that $\phi_t(x)$ exists for such $t$ so we may ask if $\{\phi_t(x)\}_{t \in (a,b)\cap (c-\delta,c+\delta)}$ converges as a family of points in $N$ as $t\to c$.  If it does converge than we have a limit $$\lim_{t \rightarrow c} \phi_t (x)$$ and thus we arrive at Definition \ref{def_ptwsae}.


\begin{remark}
 Here it is important to notice that the limit $(\phi, B)$ from Definition \ref{def_ptwsae} is not unique in $\mp$ but by Corollary \ref{cor_uniquelimitsplain} we know it does represent a unique element in $\m$.  Furthermore, given $\ftab \in \Ft$ we can create a family in $\F$ by making a choice of representative for each $\tab.$ If a choice exists such that the resulting family in $\F$ converges than we say that $\ftab$ converges almost everywhere pointwise. The limit could potentially depend on the choice of representatives, but Corollary \ref{cor_uniquelimitstilde} shows that any limit computed in this way gives the same element of $\m$. In such a case we would write $[\phi_t] \rightae [\phi_0]$ as $t\to c$. Note that the existence of one choice of representatives which converges does not guarantee that all choices will converge. 
\end{remark}

We are now ready to prove Theorem \ref{thm_convg}.

\begin{proof}[Proof of Theorem \ref{thm_convg}]
 It is sufficient to prove for families indexed by $\I$ and limits as $t\to 0$. Let $\Snfull$ be a nested exhaustion of $M$ by finite volume sets, $(\phi, B)\in\mp$, and $\fI\in\F$ such that $\phi_t \rightae \phi$ as $t\to 0$. For the duration of this proof let $\li (B_t)$ denote $\liz (B_t)$ and $\ls B_t$ denote $\lsz B_t$. 
 
Recall that for $x \in B$ we have that $x\in \li B_t$ and $\phi_t(x) \to \phi(x)$ as $t\to 0$ by Definition \ref{def_ptwsae}.  Thus $$\lim_{t \to 0} \pt (x) = \lim_{t\to 0} \text{min}\{1,d(\phi_t(x), \phi(x))\} = \text{min}\left\{1,d\left(\lim_{t\to 0} \phi_t (x), \phi(x)\right)\right\}=0.$$ Also notice that for any $x \in M \setminus \ls B_t$ we know that $x \notin B$ and also for small enough $t$ we know $x \notin B_t$.  That is, there exists some $T\in(0,1)$ such that $t<T$ implies that $x\notin B_t$ so for such $t$ we have that $x\notin B \cup B_t$. This means that for $t<T$ we have that $\pt (x) = 0.$ Thus $$\lim_{t \to 0} \pt (x) = 0$$ for any $x\in M\setminus \ls B$ as well. Every $x \in \mathcal{S}$ must either
\begin{enumerate}
 \item be in $B$ or $M \setminus \ls B_t$ and thus satisfy $\lim \pt (x) = 0$ as $t\to 0$;
 \item be in $\ls B_t \setminus B_0$, which is a set of measure zero.
\end{enumerate}
This means that $\pt \to 0$ as $t\to 0$ pointwise almost everywhere. Also notice that each $\pt$ is bounded by the constant function 1, which is integrable on $M$ because $\vnu{M} = 1.$ These two facts allow us to invoke the Lebesgue Dominated Convergence Theorem to conclude that $$\lim_{t\to 0}\Dfull (\phi_t, \phi) = \lim_{t \to 0} \varint_M \pt\, \dn = \varint_M \lim_{t \to 0} \pt \, \dn=0.$$
\end{proof}

\begin{remark}
 Notice that the converse of Theorem \ref{thm_convg} does not hold.  We know because of Example \ref{ex_wave} in which the family converges in $\D$ but not pointwise almost everywhere.
\end{remark}

The following two results are a consequence of Theorem \ref{thm_convg} and the fact that $(\m, \Dfull)$ is a metric space.

\begin{cor}\label{cor_uniquelimitstilde}
 Almost everywhere pointwise limits of families in $\Ft$ are unique in $\m$. That is, let $a,b,c\in\R$ with $a<b$ and $c\in[a,b]$.  Now suppose $\ftab \in \Ft$, $(\phi_t^1, B_t^1), (\phi_t^2, B_t^2) \in [\phi_t, B_t]$ for $\tab$, and $(\phi^1,B^1), (\phi^2,B^2)\in\mp$ such that $(\phi_t^i,B_t^i)\rightae(\phi^i, B^i)$ as $t\to c$ for $i=1,2$. Then $[\phi^1, B^1] = [\phi^2, B^2]$ in $\m$.
\end{cor}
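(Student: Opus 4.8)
The plan is to deduce this directly from Theorem \ref{thm_convg} together with the triangle inequality for $\Dfull$ on $\m$. As is done throughout the paper, it suffices to treat families indexed by $\I$ with limits taken as $t\to 0$.

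First I would note that, by hypothesis, $(\phi_t^i, B_t^i) \rightae (\phi^i, B^i)$ as $t\to 0$ for $i=1,2$, so Theorem \ref{thm_convg} gives $\phi_t^i \rightd \phi^i$ as $t\to 0$; that is, $\lim_{t\to 0}\Dfull(\phi_t^i, \phi^i) = 0$ for $i=1,2$ and for any fixed choice of exhaustion $\Snfull$ of $M$. Next, since $(\phi_t^1, B_t^1)$ and $(\phi_t^2, B_t^2)$ are two representatives of the same class $[\phi_t, B_t]$, the very definition of $\sim$ gives $\D(\phi_t^1, \phi_t^2) = 0$, and hence (by Corollary \ref{cor_dzero}) $\Dfull(\phi_t^1, \phi_t^2) = 0$ for every $t\in\I$ for the exhaustion we have fixed.

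Now I would fix an arbitrary $t\in\I$ and apply the triangle inequality for $\Dfull$ on $\m$, which holds by Proposition \ref{prop_ddist}:
$$\Dfull(\phi^1, \phi^2) \;\leq\; \Dfull(\phi^1, \phi_t^1) + \Dfull(\phi_t^1, \phi_t^2) + \Dfull(\phi_t^2, \phi^2) \;=\; \Dfull(\phi^1, \phi_t^1) + \Dfull(\phi_t^2, \phi^2).$$
Letting $t\to 0$, the right-hand side tends to $0$ by the first step, while the left-hand side does not depend on $t$; therefore $\Dfull(\phi^1, \phi^2) = 0$, which is precisely the assertion $[\phi^1, B^1] = [\phi^2, B^2]$ in $\m$.

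I do not expect a genuine obstacle here: the entire substance has been pushed into Theorem \ref{thm_convg}, and what remains is the standard ``limits are unique in a metric space'' argument, applied to the fact that the middle term $\Dfull(\phi_t^1,\phi_t^2)$ is identically zero in $t$. The only point needing a moment's care is the second step — that replacing a representative of $[\phi_t, B_t]$ by another one costs nothing in $\Dfull$ — and this is immediate from the definition of $\sim$ together with Corollary \ref{cor_dzero}.
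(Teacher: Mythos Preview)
Your proposal is correct and follows essentially the same approach as the paper's own proof: invoke Theorem \ref{thm_convg} to convert the a.e.\ pointwise convergence into $\Dfull$-convergence, use $(\phi_t^1,B_t^1)\sim(\phi_t^2,B_t^2)$ to kill the middle term, and apply the triangle inequality before letting $t\to c$. The only difference is cosmetic --- you cite Corollary \ref{cor_dzero} and Proposition \ref{prop_ddist} explicitly, whereas the paper leaves these implicit.
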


\begin{proof}
 Let $\ftab, (\phi_t^1, B_t^1), (\phi_t^2, B_t^2), (\phi^1, B^1),\text{ and }(\phi^2, B^2)$ be as in the statement of the Corollary. Thus for any choice of a nested exhaustion of $M$ by finite volume sets $\Snfull$, a complete metric $\dist$ on $N$ which is induced by a Riemannian metric, and $\tab$ we have that $$0\leq\Dfull (\phi^1, \phi^2) \leq \Dfull(\phi^1, \phi_t^1) + \Dfull(\phi_t^1, \phi_t^2) + \Dfull(\phi_t^2, \phi^2).$$ The middle term on the right side is zero because $(\phi_t^1, B_t^1) \sim (\phi_t^2, B_t^2)$ and the remaining terms both approach zero as $t\to c$ because $(\phi_t^i,B_t^i)\rightd(\phi^i, B^i)\in\m$ as $t\to c$ by Theorem \ref{thm_convg}.
\end{proof}

\begin{cor}\label{cor_uniquelimitsplain}
 Almost everywhere pointwise limits of families in $\F$ are unique up to $\sim$.  That is, suppose that $a,b,c\in\R$ with $a<b$ and $c\in[a,b]$ and further suppose that $\fab \in \F$ and $\phi, \phi' \in \mp$. If $\phi_t \rightae \phi$ and $\phi_t \rightae \phi'$ then $\phi \sim \phi'$.
\end{cor}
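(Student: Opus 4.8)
The plan is to deduce this directly from Theorem \ref{thm_convg} together with the uniqueness of limits in a metric space; essentially all of the work has already been done, so this is a short corollary rather than a new argument. First I would invoke Theorem \ref{thm_convg} twice: since $\fab \in \F$ satisfies $\phi_t \rightae \phi$ and $\phi_t \rightae \phi'$ as $t\to c$, and $(\phi,B),(\phi',B')\in\mp$, we obtain $\phi_t \rightd \phi$ and $\phi_t \rightd \phi'$ as $t\to c$.

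Next I would fix any nested exhaustion $\Snfull$ of $M$ by finite volume sets together with any complete Riemannian distance $\dist$ on $N$, so that by Proposition \ref{prop_ddist} the function $\Dfull$ is a genuine metric on $\m$ (in particular it satisfies the triangle inequality on $\mp$). Then for every $t\in(a,b)$ we have
\begin{equation*}
 0 \le \Dfull(\phi,\phi') \le \Dfull(\phi,\phi_t) + \Dfull(\phi_t,\phi').
\end{equation*}
By the previous paragraph the right-hand side tends to $0$ as $t\to c$, while the left-hand side does not depend on $t$; hence $\Dfull(\phi,\phi') = 0$.

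Finally, by part \eqref{part_dconv1} of Definition \ref{def_big}, the vanishing of $\Dfull(\phi,\phi')$ for this one choice of exhaustion and metric forces $\D(\phi,\phi') = 0$, which is exactly the statement that $\phi \sim \phi'$. (This argument is the exact analogue of the proof of Corollary \ref{cor_uniquelimitstilde}, with equivalence classes replaced by representatives.) There is no genuine obstacle in the argument; the only points requiring a small amount of care are that $(\phi,B)$ and $(\phi',B')$ really are elements of $\mp$, so that $\Dfull$ is defined on them — which is guaranteed by the hypotheses together with Definition \ref{def_ptwsae} — and that Theorem \ref{thm_convg} is applied with the correct indexing interval and limit point $c \in [a,b]$.
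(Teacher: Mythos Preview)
Your proposal is correct and is exactly the approach the paper takes: it states that this corollary is a consequence of Theorem~\ref{thm_convg} together with the fact that $(\m,\Dfull)$ is a metric space, and the explicit argument given for Corollary~\ref{cor_uniquelimitstilde} is precisely the triangle-inequality computation you wrote out. There is nothing to add.
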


\section{Families with singular limits}\label{sec_fam}

We will be considering one parameter families of mappings in $\Ft$.  For this type of family we can adapt the definition of smoothness\footnote{Despite the choice of terminology, it is unknown if this sense of smoothness implies that the family is continuous with respect to the topology on $\mp$.} from ~\cite{PeVN2012} which is visualized in Figure \ref{fig_smoothfam}.

\begin{definition}\label{def_smooth}
Let $a,b \in\R$ with $a<b$.  We say that a family of smooth maps $\fab \in \Fab$ is {\it smooth} if:
 \begin{enumerate}
  \item each element of $\{B_t\}$ is a submanifold of $M$;
  \item there exists a smooth manifold $B$ and a smooth map $g:(a,b) \times B \to M$ such that 
  \begin{enumerate}
   \item the mapping $g_t:b \mapsto g(t,b)$ is a smooth immersion;
   \item for each $t\in (a,b)$ we have $g_t(B)=B_t$.
  \end{enumerate}
  \item the map $(t,b) \mapsto \phi_t \circ g_t(b)$ is smooth.
 \end{enumerate}
\end{definition}

\begin{figure}
 \centering
 \includegraphics[height=220pt]{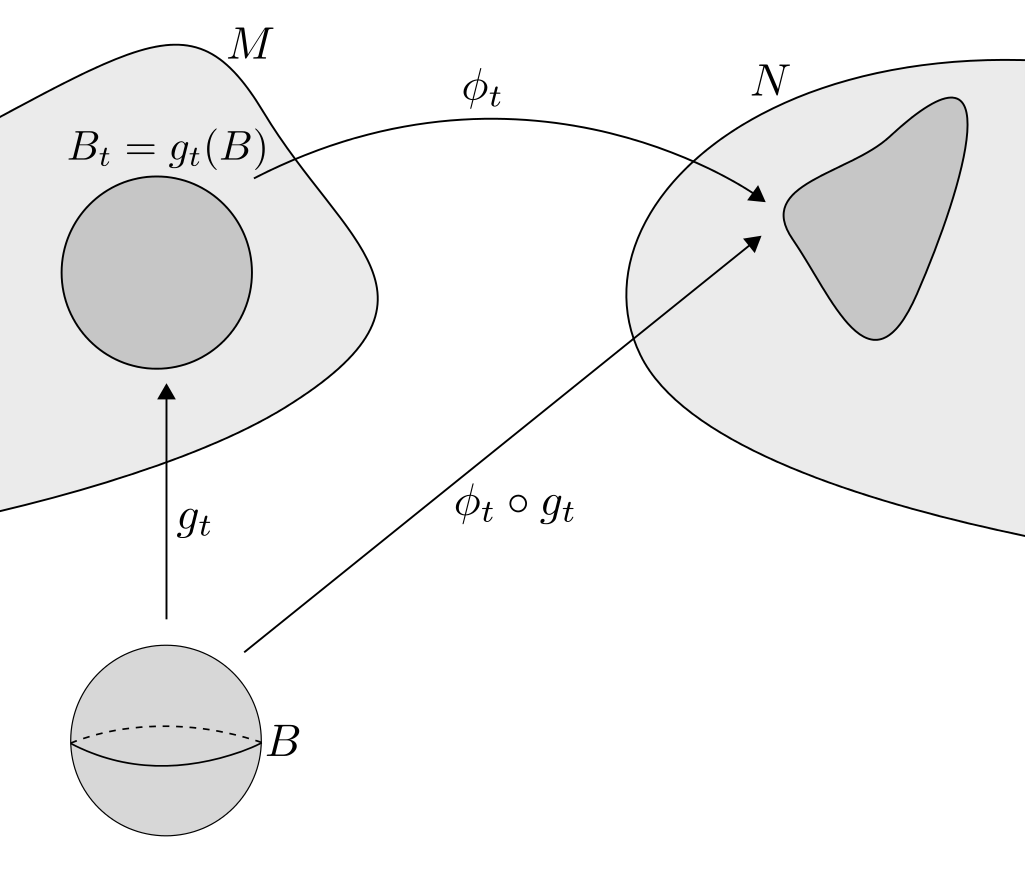}
 \caption{A figure of the relevant maps when defining a smooth family of embeddings.}
 \label{fig_smoothfam}
\end{figure}

\begin{definition}
 We say that a smooth family $\fab \in \Feab$ has a {\it singular limit} if either
 \begin{enumerate}
  \item\label{case_ess} the family does not converge in $\D$ as $t\to a$;
  \item there exists some $\phi_0 \in \mp$ such that $\phi_t \rightd \phi_0$ as $t\to a$ but $[\phi_0] \notin \et$.
 \end{enumerate}
In Case \eqref{case_ess} we say that the singularity is {\it essential} (because Theorem \ref{thm_r0convg} assures it cannot be removed).
\end{definition}

Recall the function $\rfull: \Fab \rightarrow [0,\infty]$ from Definition \ref{def_rpert}.  This function quantifies how far a family is from converging by measuring how much each embedding must be changed in order to create a new family which does converge.  It is straightforward to show that $r$ is surjective\footnote{To conclude that $r$ is actually surjective we must also show that $r=\infty$ is possible.  This is clear when a family such as $\phi_t:\I\to\R$, $\phi_t(x)=\left(\nicefrac{1}{t}\right) \text{sin}(\nicefrac{1}{t})$ is considered.}.

\begin{prop}
 For any $q \in \R$ there exists some choice of manifolds $M$ and $N$, an exhaustion $\Snfull$ of $M$, a distance $d$ induced by a complete Riemannian metric on $N$, $a,b\in\R$ such that $a<b$, and a smooth family $\fab \in \F$ for which $\rfull (\fab) = q.$
\end{prop}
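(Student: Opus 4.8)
The plan is to realize each value $q$ by a single explicit smooth family of embeddings whose radius of convergence I can pin down to $q$ from both sides. Since $\rfull\ge 0$, only $q\ge 0$ is relevant; the value $q=\infty$ is already produced by the oscillating family in the footnote above and $q=0$ by any constant family of embeddings, so I will concentrate on $0<q<\infty$.

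First I would fix the ambient data. Let $N=\R^2$ with its standard (complete) Riemannian metric and $d=d_{\R^2}$, take $a=0$ and $b=1$, and let $M=\R$ equipped with a volume form $\V$ of finite total mass $\v M=2q+1$; with the exhaustion $\mathcal{S}_n:=M$ the manifold $M$ has finite volume, so by Remark \ref{rmk_mcomp} the radius of convergence is computed from $\DM$, and $\DM\le\v M=2q+1$ leaves room for the value $q$. Choose a smooth $H\colon\R\to[0,1]$ with $\varint_M H\,\dv=2q$ (possible because $H\le 1$ and $\v M>2q$), and a smooth $\omega\colon(0,1)\to[0,1]$ with $\liminf_{t\to 0}\omega(t)=0$ and $\limsup_{t\to 0}\omega(t)=1$, for instance $\omega(t)=\tfrac12\big(1+\sin(1/t)\big)$. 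The family I would use is $B_t:=\R$ and $\phi_t(x):=\big(x,\,H(x)\,\omega(t)\big)$ for $t\in(0,1)$: each $\phi_t$ is a smooth (proper) embedding $\R\hookrightarrow\R^2$, and $\fI$ is a smooth family in the sense of Definition \ref{def_smooth} (take $B=\R$, $g(t,x)=x$). It does not converge pointwise almost everywhere, since $H(x)\omega(t)$ has no limit wherever $H(x)>0$, so it genuinely needs perturbing.

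For the upper bound $\rfull(\fI)\le q$ I would test $\fI$ against the constant family $\widetilde\phi_t(x):=\big(x,\tfrac12H(x)\big)$ with $\widetilde B_t:=\R$: it is a smooth family of embeddings, it converges almost everywhere (trivially, being constant) as $t\to 0$ to the embedding $x\mapsto\big(x,\tfrac12H(x)\big)\in\e$, it satisfies $B_t=\widetilde B_t$ and $\li\widetilde B_t\subset\widetilde B$, and $|\phi_t(x)-\widetilde\phi_t(x)|=H(x)\,|\omega(t)-\tfrac12|\le\tfrac12H(x)<1$ for all $x,t$, whence $\DM(\phi_t,\widetilde\phi_t)=\varint_M\tfrac12H\,\dv=q$ for every $t$; thus it is a convergent $q$-perturbation (Definition \ref{def_rpert}) and $\rfull(\fI)\le q$. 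The lower bound $\rfull(\fI)\ge q$ is the heart of the matter. Let $\{(\widetilde\phi_t,\widetilde B_t)\}$ be any smooth convergent $\varepsilon$-perturbation, with $\widetilde\phi_t\rightae\widetilde\phi$ as $t\to 0$ for some $(\widetilde\phi,\widetilde B)\in\e$; since $\widetilde B_t=B_t=\R$ and $\li\widetilde B_t\subset\widetilde B$ we must have $\widetilde B=\R$, so $\widetilde\phi_t(x)\to\widetilde\phi(x)$ for almost every $x\in M$. Writing $\widetilde v_t$ and $\widetilde v$ for the second coordinates of $\widetilde\phi_t$ and $\widetilde\phi$, the pointwise bound $|\phi_t(x)-\widetilde\phi_t(x)|\ge|H(x)\omega(t)-\widetilde v_t(x)|$ gives
\[
  \varepsilon\ \ge\ \DM(\phi_t,\widetilde\phi_t)\ \ge\ \varint_M\text{min}\{1,\,|H(x)\omega(t)-\widetilde v_t(x)|\}\,\dv\qquad\text{for all }t\in(0,1).
\]
Evaluating this along a sequence $t_k\to 0$ with $\omega(t_k)\to 1$ and along a sequence $s_k\to 0$ with $\omega(s_k)\to 0$, and invoking the dominated convergence theorem (the integrands are bounded by $1$, $\v M<\infty$, and $\widetilde v_t\to\widetilde v$ almost everywhere), I would obtain
\[
  \varint_M\text{min}\{1,|H-\widetilde v|\}\,\dv\ \le\ \varepsilon \qquad\text{and}\qquad \varint_M\text{min}\{1,|\widetilde v|\}\,\dv\ \le\ \varepsilon.
\]
Adding these and using $\text{min}\{1,\alpha\}+\text{min}\{1,\beta\}\ge\text{min}\{1,\alpha+\beta\}$ together with $|H-\widetilde v|+|\widetilde v|\ge|H|=H$ gives $2q=\varint_M H\,\dv\le 2\varepsilon$, i.e.\ $\varepsilon\ge q$. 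As this holds for every convergent $\varepsilon$-perturbation, $\rfull(\fI)\ge q$, and with the upper bound $\rfull(\fI)=q$.

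I expect the lower bound to be the only genuine obstacle: one must extract two honest one-sided limiting inequalities for the perturbed limit $\widetilde\phi$ out of a family that merely oscillates, which is exactly where the choice of $\omega$ with $\liminf\omega=0$, $\limsup\omega=1$ and the dominated convergence theorem enter, and where the truncation built into the penalty function — without which $\text{min}\{1,\alpha\}+\text{min}\{1,\beta\}\ge\text{min}\{1,\alpha+\beta\}$ fails — is essential. Verifying that both families satisfy Definition \ref{def_smooth}, that each $\phi_t$ is an embedding, and that a suitable $H$ and $\V$ exist is routine bookkeeping.
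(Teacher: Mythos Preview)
Your approach is essentially the same as the paper's: build an explicitly oscillating family of embeddings, bound $\rfull$ from above via the ``midpoint'' perturbation, and from below by comparing any perturbed limit to the two extreme positions of the oscillation. The paper uses $M=(0,3q)$, $N=\R$, $\phi_t(x)=\tfrac{x}{9q}+\tfrac13\sin(1/t)$, while you use $M=\R$ with a finite-mass volume form, $N=\R^2$, and $\phi_t(x)=(x,H(x)\omega(t))$; these are cosmetic differences. For the lower bound the paper stays inside the metric $\D$ and uses the triangle inequality $\D(\phi_L,\phi_0)+\D(\phi_0,\phi_R)\ge\D(\phi_L,\phi_R)=2q$, then picks a specific $t$ with $\phi_t=\phi_L$; your route via the dominated convergence theorem to pass to the a.e.\ limit $\widetilde\phi$ along the two subsequences is arguably cleaner and makes the role of the a.e.\ convergence in Definition~\ref{def_rpert} more explicit.

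One small slip: your claim that $\DM(\phi_t,\widetilde\phi_t)=\varint_M\tfrac12H\,\dv=q$ for every $t$ is not correct. In fact
\[
\DM(\phi_t,\widetilde\phi_t)=\varint_M H(x)\,\bigl|\omega(t)-\tfrac12\bigr|\,\dv=2q\,\bigl|\omega(t)-\tfrac12\bigr|\le q,
\]
with equality only when $\omega(t)\in\{0,1\}$. This does not affect the argument, since $\DM(\phi_t,\widetilde\phi_t)\le q$ is all you need for a convergent $q$-perturbation; but the computation as written is off.
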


\begin{proof}
 From the existence of families of embeddings which do converge we know that $0$ is in the range of $r$.  Pick some $q > 0$ and let $\phi_t: (0,3q) \rightarrow \R$ for $\tI$ via $$\phi_t(x) = \frac{x}{9q} + \frac{1}{3}\text{sin}(\nicefrac{1}{t}).$$ So in this case $B_t = (0,3q)$ for all $t$, $a=0$, $b=1$, $M = (0,3q)$ with the usual measure inherited from $\R$, and $N = \R$ with the usual distance.  Since $M$ is finite throughout this example let $\D:=\D_{\{M\}}^d$ and $r := r_{\{M\}}^d$ where $d$ is the standard distance on $\R$. Notice that if we perturbed this family to converge to some limit which did not have $(0,3q)$ as its domain we could change the domain of the limit to $(0,3q)$ and have a smaller perturbation.  So we can assume that the domain of the limit is $(0,3q)$.  Suppose that we wanted to change this family so it converged to some map $\phi_0 : (0,3q) \rightarrow \R$.  We can see that the $\phi_t$ oscillate to the left and right, so let $$\phi_L (x)= \frac{x}{9q}-\nicefrac{1}{3}$$ and $$\phi_R (x) = \frac{x}{9q}+\nicefrac{1}{3}.$$  Now let $$l_n = \frac{2}{(4n+1)\pi} \text{ and } r_n = \frac{2}{(4n+3)\pi}$$ so that $\phi_{l_n} = \phi_L$ and $\phi_{r_n} = \phi_R$ for all $n \in \N$.  Notice $$d(\phi_L (x), \phi_R(x))=\nicefrac{2}{3}$$ for all $x\in (0,3q)$ so $$d(\phi_L (x), \phi_0(x)) + d(\phi_0(x), \phi_R(x)) \geq \nicefrac{2}{3}.$$  Clearly this implies that $$\text{min}\{1,d(\phi_L (x), \phi_0(x))\} + \text{min}\{1,d(\phi_0(x), \phi_R(x))\} \geq \nicefrac{2}{3}$$ and so integrating each side over $(0,3q)$ gives $$\mathcal{D}(\phi_L, \phi_0) + \mathcal{D}(\phi_0, \phi_R) \geq 2q$$ so one of the two terms must be greater than or equal to $q$.  Without loss of generality suppose that $\mathcal{D}(\phi_L, \phi_0) \geq q$.  In such a case choose any $\varepsilon > 0$ and find some $T\in \I$ such that $t < T$ implies $\mathcal{D}(\widetilde{\phi_t}, \phi_0) < \varepsilon$ where $\{\widetilde{\phi_t}\}$ is any family which converges to $\phi_0$.  Then pick some $n\in\N$ such that $l_n < T$ and let $t=l_n$.  Now $$\mathcal{D} (\phi_t, \widetilde{\phi_t}) + \mathcal{D}(\widetilde{\phi_t}, \phi_0) \geq \mathcal{D}(\phi_t, \phi_0)$$ so $\mathcal{D}(\phi_t, \widetilde{\phi_t}) \geq q-\varepsilon$ for all $\varepsilon>0$.  This allows us to conclude that $r(\f) \geq q.$
 
 Now let $\widetilde{\phi_t}:(0,3q)\rightarrow \R$ with $\widetilde{\phi_t} (x) = \frac{x}{9q}$ be a family of maps which is clearly smooth and has limit $\phi_0 (x) = \frac{x}{9q}.$  Now notice $$\mathcal{D}(\phi_t, \widetilde{\phi_t}) = \varint_{\mc{(0,3q)}} \text{min}\{1, d(\phi_t, \widetilde{\phi_t})\}\, \dv = q \left| \text{sin}(\nicefrac{1}{t})\right| \leq q$$and it is important to notice that $\mathcal{D}(\phi_t, \widetilde{\phi_t})=q$ is achieved infinitely often.  Thus we know that $r(\f) \leq q$ so in fact we know that $r(\f)=q$.
\end{proof}

In the case that $\rfull (\f) = 0$ we say that the family has a {\it removable singularity with respect to $\Dfull$}\footnote{It may be true that having zero radius of convergence is independent of the chose of parameters $d$ and $\Snfull$, see Section \ref{subsec_implic}.}. Now we are prepared to prove Theorem \ref{thm_r0convg}.

\begin{proof}[Proof of Theorem \ref{thm_r0convg}]
 Suppose that $\fab \in \F$ and $r(\fab)=0$.  Fix some compact $\mathcal{S} \subset M$ and we will show that $\fab$ is Cauchy with respect to $\mathcal{D}_\mathcal{S}.$
 
 Fix some $\varepsilon > 0.$ Let $\delta = \nicefrac{\varepsilon}{4}$ and since $r(\fab)=0$ define some other family $\{(\widetilde{\phi_t^\delta}, \widetilde{B_t^\delta})\}$ such that
 \begin{enumerate}
  \item $\mathcal{D}(\phi_t, \widetilde{\phi_t^\delta}) < \delta$ for all $\tab$;
  \item $ B_t = \widetilde{B_t^\delta}$ for all $\tab$;
  \item\label{blah} there exists some $\widetilde{\phi^\delta}\in \e$ such that $\widetilde{\phi_t^\delta} \rightae \widetilde{\phi^\delta}$ as $t\to a$.
 \end{enumerate}

 From Theorem \ref{thm_convg} and item \eqref{blah} above we know that $$ \widetilde{\phi_t^\delta} \rightds \widetilde{\phi_0^\delta} $$ as $t\to a$ so we can choose some $T \in (a,b)$ such that $t < T$ implies $\mathcal{D}_\mathcal{S} (\widetilde{\phi_t^\delta}, \widetilde{\phi^\delta})<\delta.$  Finally, we can conclude that for any $t,s < T$ we have that
 \begin{align*}
  \mathcal{D}_\mathcal{S}(\phi_t, \phi_s) &\leq \mathcal{D}_\mathcal{S} (\phi_t, \widetilde{\phi_t^\delta}) + \mathcal{D}_\mathcal{S} (\widetilde{\phi_t^\delta}, \widetilde{\phi^\delta}) + \mathcal{D}_\mathcal{S} (\widetilde{\phi^\delta}, \widetilde{\phi_s^\delta}) + \mathcal{D}_\mathcal{S} (\widetilde{\phi_s^\delta}, \phi_s)\\
  &< 4 \delta = \varepsilon.
 \end{align*}
This means that $\fab$ is Cauchy as $t\to a$ for each $\mathcal{D}_\mathcal{S}$ so by Proposition \ref{prop_dconv} we know that it is Cauchy with respect to $\D$ as $t\to a$.  Finally, since $(\m,\D)$ is complete by Theorem \ref{thm_complete} we can come to the first conclusion of this Theorem.

Now we will show the second claim.  Suppose that the domains satisfy the required property for $T \in (a,b)$ and that $\phi_t \rightd \phi_0$ as $t\to a$. Fix $\varepsilon >0$ and find some $T_1 \in (a, T)$ such that $s, t < T_1$ implies that $\D(\phi_t, \phi_s) < \varepsilon.$ Now let $\mathcal{b}: (a,b) \to [0,1]$ be a smooth bump function such that $\mathcal{b}(t)=0$ for $t\geq T_1$ and $b(t) = 1$ for $t < \nicefrac{T_1+a}{2}.$  Now define $f:(a,b) \to [\nicefrac{T_1+a}{2}, b)$ via $$f(t) = \big( 1-\mathcal{b}(t) \big) t + \mathcal{b}(t) \frac{T_1+a}{2}.$$ Finally let $$\widetilde{\phi_t} = \phi_{f(t)}\vert_{B_t}$$ and notice that this is a smooth family satisfying $\widetilde{\phi_t} \rightae \phi_{\nicefrac{T_1}{2}}$ as $t\to a$. By the choice of $T_1$ we can see that for all $\tab$ we have $\D(\phi_t, \widetilde{\phi_t})<\varepsilon.$ Also, because of the requirement on the domains we know that $B_t\subset B_{f(t)}$ and thus $\widetilde{\phi_t}: B_t \to N$ is defined on all of $B_t$.
 
\end{proof}

\begin{remark}
It is natural to wonder if $r(\fab)=0$ implies the family must in fact converge pointwise almost everywhere in $\m$.  The answer to this question is no; again consider Example \ref{ex_wave}. The functions in Example \ref{ex_wave} converge in $\D$ and all have the same domain so we know that $\rfull=0$ for these functions, but we also know that they do not converge pointwise almost everywhere. 
\end{remark}

\section{Final remarks}\label{sec_ques}

\subsection{Approaches to prove a converse to Theorem \ref{thm_r0convg}}Now we have set up all of the machinery to begin to explore the converse of Theorem \ref{thm_r0convg} in the case that the domains are not restricted to shrink or stabilize eventually. That is, we will outline some potential avenues to answer the following question.

\begin{question}\label{ques_1}
 Is it true that $\fab \rightd \phi_0$ implies that $\rfull(\fab) = 0$?\\
\end{question}

There are two approaches in the general case: we can attempt to extend embeddings or we can smooth singular limits by understanding the singularities locally.

\subsubsection{Extending embeddings to remove singularities}
This method extends the idea used to prove the partial converse direction of Theorem \ref{thm_r0convg} given in the statement of the theorem.  The idea is that if $\fab \rightd \phi_0$ as $t\to c$ (for $a,b,c\in\R, a<b, c\in[a,b]$) then in order to get an $\varepsilon$-perturbation of $\fab$ we choose some $T \in (a,b)$ such that $s,t < T$ implies that $\Dfull(\phi_t, \phi_s)<\varepsilon$.  Then, just as in the proof of Theorem \ref{thm_r0convg}, we must smoothly change the family so that $t<\frac{T+a}{2}$ implies that $\widetilde{\phi_t} = \phi_{\frac{T+a}{2}}$.  The difficultly here is dealing with the domains.  If $\li{B_t} \not\subset B_{\frac{T}{2}}$ then this idea we have outlined will not define an embedding with domain all of $B_0$, so this embedding would have to be extended.  It is important to notice that $\v{B_0 \vartriangle B_{\frac{T+a}{2}}}<\varepsilon$ and so the embedding can be defined in any way on the extension, as long as it does not change on $B_{\frac{T+a}{2}}$.  Thus, this questions comes down to asking when an embedding of some subset of $M$ can be extended to a larger domain in $M$. Extending embeddings or smooth maps has been of independent interest for many years.  See for example the Tietze Extension Theorem ~\cite[Theorem 4.16]{Folland}, the Whitney Extension Theorem ~\cite[Theorem I]{Whitney1934}, the Extension Lemma ~\cite[Lemma 2.27]{Lee2006}, and for a collection of more recent work in extension problems see ~\cite{Brudnyi2012}.

\subsubsection{Removing singularities locally}
The basic strategy is the following.  Suppose for $a,b,c\in\R, a<b, c\in[a,b]$ that $\fab \in \F$ satisfies $\phi_t \rightd \phi$ as $t\to a$ for some $\phi \in \mp$ and suppose further that $\mathcal{S} \subset B$ is a closed subset of $M$ containing all of the singular points of the limiting map $\phi$ and that eventually $\mathcal{S} \subset B_t$ for all $t$.  That is, we assume that $$\phi \vert_{B \setminus \mathcal{S}} : B \setminus \mathcal{S} \se N$$ is an embedding and there exists some $T\in(a,b)$ such that $t<T$ implies $\mathcal{S} \subset B_t$.  Then for some neighborhood of $\mathcal{S}$ we can define $\widetilde{\phi}$ by $\phi_{t_0}$ restricted to that neighborhood for some small enough $t_0 \in (a,b)$.  Then to define $\widetilde{\phi}$ outside of a slightly larger neighborhood of $\mathcal{S}$ we simply use $\phi$ unchanged.  Then we must connect these two pieces in a way which makes the result an embedding.  Finally each $\phi_t$ can then be changed on a neighborhood of $\mathcal{S}$ to converge to $\phi_{t_0}$ and outside of that neighborhood they converge to $\phi=\widetilde{\phi}$ already. This idea is shown in Figure \ref{fig_strategy}. The difficulty comes when we must connect the two embeddings; it is well known that partition of unity type arguments can be used to smoothly transition between two smooth maps ~\cite{Lee2006} but in this case we must also preserve the embedding structure.

\begin{figure}
 \centering
 \includegraphics[height=160pt]{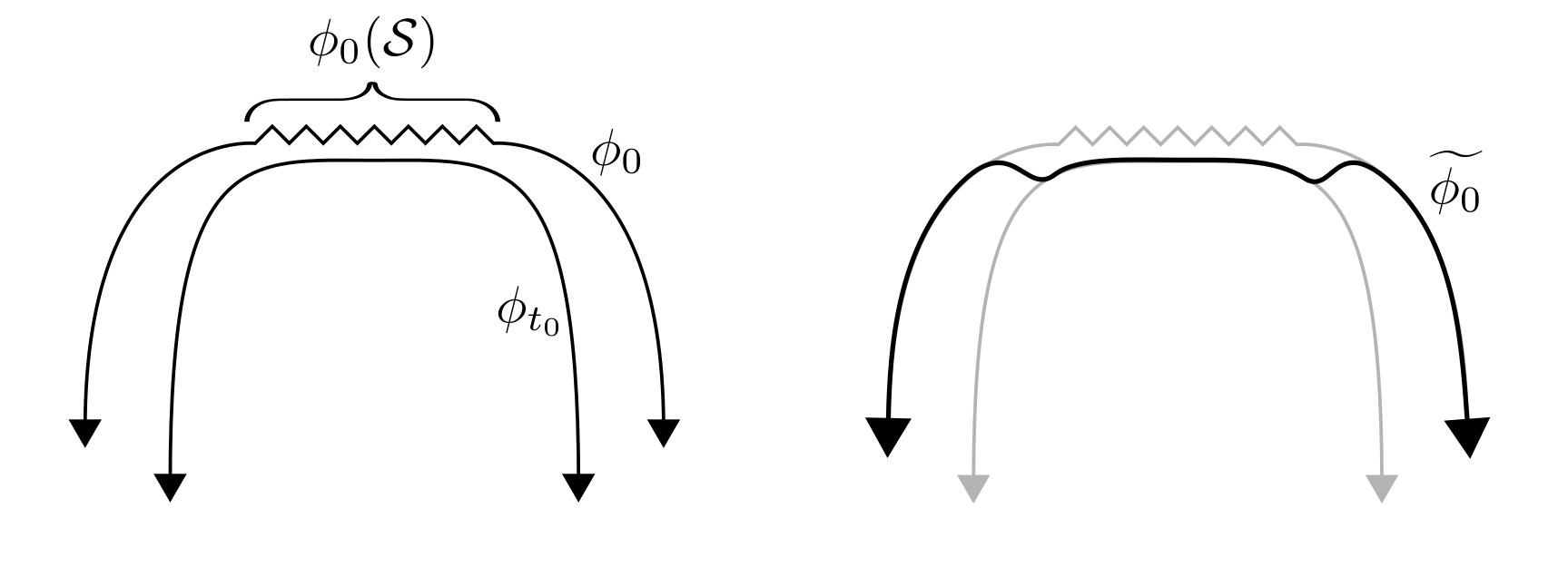}
 \caption{The strategy is to connect the embedding $\phi_{t_0}$ with the map $\phi$ which is an embedding away from $\mathcal{S}.$ In this way we are able to avoid the singular part of $\phi$ while only changing it slightly on a small set.}
 \label{fig_strategy}
\end{figure}

\subsection{Implications of a positive answer to Question \ref{ques_1}}\label{subsec_implic}

If the answer to Question \ref{ques_1} were yes, then there are several implications.  First, we will have a new characterization of families with removable singularities, namely these are exactly the families which converge in $\D$.  Second, and most importantly, there is then an easy proof that $\rfull(\f)=0$ does not depend on the choices of $\Snfull$ and $\dist$.  The proof is the following:

Let $\Snfull$, $\{\mathcal{S}_n'\}_{n=1}^\infty$, $\dist$, and $\dist'$ be choices of finite exhaustion and metric.  Suppose that $a,b\in\R$ with $a<b$ and $\fab\in\F$ is a smooth family such that $\rfull(\fab)=0$.  Then by Theorem \ref{thm_r0convg} we know that $\lim \Dfull (\phi_t, \phi) = 0$ as $t\to a$ for some $\phi \in \mp$.  By Theorem \ref{thm_complete} this means that $\lim \mathcal{D}_{\{\mathcal{S}_n'\}}^{d'} (\phi_t, \phi) = 0$ as $t\to a$ and thus by the assumed positive answer to Question \ref{ques_1} we know that $r_{\{\mathcal{S}_n'\}}^{d'} (\fab) = 0.$

\subsection{Further questions}
It would be interesting to study Question \ref{ques_1} restricted to a specific type of embedding.  For example, thinking back to the original motivation from Section \ref{sec_intro}, one could consider whether this is true for the collection of symplectic embeddings\footnote{Resolving singular points of symplectic manifolds is related to this in spirit and has been studied extensively such as in ~\cite{McWo1995}} where the original smooth family $\fab$ consists exclusively of symplectic embeddings and the perturbed family $\ftildeab$ from the definition of the radius of convergence is also required to be symplectic. Symplectic manifolds have been shown to admit a high degree of flexibility (see for example Moser's Theorem ~\cite{Mo1965} or Darboux's Theorem ~\cite{daSi2008}) although Gromov's nonsqueezing theorem ~\cite{Gr1985} represents a level of rigidity that symplectic embeddings do need to respect. One could also  consider the case of isometric embeddings of Riemannian manifolds, even in the case of $\mp(\R, \R^2)$.  Clearly studying further types of embeddings would be enlightening as it would allow us to gain a greater understanding of the rigidity of these structures.  Indeed, it is the purpose of this paper to create a foundation off of which many types of families of embeddings may be studied.\\

{\it Acknowledgements.} The author was supported by the National Science Foundation under agreement No. DMS-1055897.  He is very grateful to his advisor \'Alvaro Pelayo both for originally suggesting the study of this topic and also for many helpful discussions. 


\bibliographystyle{amsplain}
\bibliography{smooth_sing}

{\small
  \noindent
  \\
  {\bf Joseph Palmer} \\
  Washington University,  Mathematics Department \\
  One Brookings Drive, Campus Box 1146 \\
  St Louis, MO 63130-4899, USA.\\
  {\em E\--mail}: \texttt{jpalmer@math.wustl.edu} \\
}   

\end{document}